\numberwithin{equation}{section}
\theoremstyle{plain}
\newtheorem{thm}{Theorem}[section]
\newtheorem{lem}[thm]{Lemma}
\newtheorem{prop}[thm]{Proposition}
\theoremstyle{definition}
\newtheorem{defn}[thm]{Definition}
\theoremstyle{remark}
\newtheorem{rem}[thm]{Remark}
\theoremstyle{plain}
\newtheorem{hypo}{Hypothesis}
\newcommand{\settheoremtag}[1]{
	\let\oldthehypo\thehypo
	\renewcommand{\thehypo}{#1}
	\g@addto@macro\endhypo{
	\addtocounter{hypo}{-1}
	\global\let\thehypo\oldthehypo}
}
\theoremstyle{remark}
\newtheorem{custom}{}
\newcommand{\settag}[1]{
	\let\oldthecustom\thecustom
	\renewcommand{\thecustom}{#1}
	\g@addto@macro\endcustom{
	\addtocounter{custom}{-1}
	\global\let\thecustom\oldthecustom}
}
\newcommand{\be}{\begin{equation}}
\newcommand{\ee}{\end{equation}}
\newcommand{\bes}{\begin{equation*}}
\newcommand{\ees}{\end{equation*}}
\newcommand{\bfig}{\begin{figure}}
\newcommand{\efig}{\end{figure}}
\newcommand{\bt}{\begin{table}}
\newcommand{\et}{\end{table}}
\newcommand{\bc}{\begin{center}}
\newcommand{\ec}{\end{center}}
\newcommand{\ba}{\begin{array}}
\newcommand{\ea}{\end{array}}
\newcommand{\norm}[1]{\left\Vert#1\right\Vert}
\newcommand{\mt}[1]{\mathrm{#1}}
\newcommand{\wt}[1]{\widetilde{#1}}
\newcommand\restrict[1]{\raisebox{-.6ex}{$|$}_{\raisebox{.4ex}{\scriptsize $#1$}}}
\def\R{\mathbb{R}}
\def\N{\mathbb{N}}
\def\P{\mathcal{P}}
\def\Pac{\P_{\mathrm{ac},2}}
\def\A{\mathcal{A}}
\def\G{\mathcal{G}}
\def\trho{\wt{\rho}_N}
\def\grad{\nabla}
\def\div{\nabla\cdot}
\def\d{\,\mathrm{d}}
\def\p{\partial}
\def\st{\, \left|\right. \,}
\def\:{\colon}
\def\der{\mathrm{d}}
\def\E{E_N}
\def\wtE{\wt{E}_N}
\def\bx{\boldsymbol{x}}
\def\bxN{\boldsymbol{x_N}}
\def\by{\boldsymbol{y}}
\def\bz{\boldsymbol{z}}
\def\wto{\rightharpoonup}
\renewcommand{\O}{\Omega}
\DeclareMathOperator{\supp}{supp}
\DeclareMathOperator{\argmin}{argmin}
\begin{document}

\title{Convergence of a Particle Method for Diffusive Gradient Flows in One Dimension}

\author{J. A. Carrillo}
\address{Department of Mathematics, Imperial College London, South Kensington Campus, London SW7 2AZ, UK.}
\email{carrillo@imperial.ac.uk}
\author{F. S. Patacchini}
\address{Department of Mathematics, Imperial College London, South Kensington Campus, London SW7 2AZ, UK.}
\email{f.patacchini13@imperial.ac.uk}
\author{P. Sternberg}
\address{Department of Mathematics, Indiana University, Bloomington, IN 47405, Indiana, USA}
\email{sternber@indiana.edu}
\author{G. Wolansky}
\address{Mathematics Dept., Technion--Israel Institute of Technology, Haifa 32000, Israel.}
\email{gershonw@math.technion.ac.il}

\date{8 June 2017}

\keywords{Particle method, diffusion, gradient flow, discrete gradient flow, $\Gamma$-convergence}
\subjclass[2010]{35K05, 65M12, 82B21, 82C22}

\maketitle

\begin{abstract}
We prove the convergence of a particle method for the approximation of diffusive gradient flows in one dimension. This method relies on the discretisation of the energy via non-overlapping balls centred at the particles and preserves the gradient flow structure at the particle level. The strategy of the proof is based on an abstract result for the convergence of curves of maximal slope in metric spaces.
\end{abstract}


\section{Introduction}
\label{sec:introduction}

In this paper we show the convergence of a particle method  to approximate the solutions to diffusion equations of the form
\begin{equation}\label{eq:pde}
	\begin{cases} \rho_t=\div\big[\rho\nabla H'(\rho(x))\big],  \quad t>0,\, x\in\O^d,\\
	\rho(0,\cdot)=\rho_0(\cdot),
	\end{cases}
\end{equation}
where $\O^d$ denotes either the closure of a bounded connected domain of $\R^d$ or all of $\R^d$ itself (when $d=1$ we simply write $\O$), $\rho(t,\cdot)\geq0$ is the unknown probability density and $\rho_0$ is a fixed element of $\P_2(\O^d)$, the set of Borel probability measures on $\O^d$ with bounded second moment---the set of Borel probability measures on $\O^d$ is simply denoted by $\P(\O^d)$. Note that we denote by the same symbol a probability measure and its density, whenever the latter exists. The function $H \: [0,\infty) \to \R$ is the \emph{density of internal energy}.

The proof of the result relies on the natural gradient flow structure of \eqref{eq:pde}; in this setting, the abstract result given by Serfaty in \cite{Serfaty} for convergence of gradient flows in metric spaces can be used. This result was in fact first proposed in \cite{SS} for the specific case of gradient flows in Hilbert spaces. The underlying metric space is given here by $\P_2(\O^d)$ and the \emph{quadratic Wasserstein distance} $d_2(\rho,\mu)$ between two measures $\rho$ and $\mu$ in $\P_2(\O^d)$, which is defined by
\be \label{eq:2-wasserstein}
	d_2(\rho,\mu) = \inf_{\gamma \in \Pi(\rho,\mu)} \left( \int_{\O^d \times \O^d} |x-y|^2 \d\gamma(x,y)\right)^{1/2},
\ee
where $\Pi(\rho,\mu)$ is the space of probability measures (also called transport plans) on $\O^d\times \O^d$ with first marginal $\rho$ and second marginal $\mu$. Note that $d_2(\rho,\mu)$ is finite for all $\rho,\mu \in \P_2(\O^d)$, and therefore the space $\P_2(\O^d)$ endowed with $d_2$ indeed defines a metric space; furthermore this space is complete, see \cite[Proposition 7.1.5]{AGS} for example. In this setting the natural \emph{continuum energy functional} $E \: \P_2(\O^d) \to \R \cup \{+\infty\}$ is
\be \label{eq:energy}
	E(\rho) = \begin{cases}
		\displaystyle \int_{\O^d} H(\rho(x)) \d x & \mbox{for all $\rho \in \Pac(\O^d)$},\\
		+\infty & \mbox{otherwise},
	\end{cases}
\ee
where $\Pac(\O^d)$ is the subset of $\P_2(\O^d)$ of probability measures which are absolutely continuous with respect to the Lebesgue measure. In this paper, the function $H$ is always either the density of internal energy for the heat equation, i.e.,
\be\label{hyp:heat}\tag{HE}
	H(x)=x\log x \quad \mbox{for all $x\in[0,\infty)$},
\ee
or a general density satisfying the following hypothesis.
\settheoremtag{(H1)}
\begin{hypo} \label{hyp:general1}
	$H$ is a proper, convex, non-negative function in $C^\infty((0,\infty)) \cap C^0([0,\infty))$ with superlinear growth at infinity and $H(0) = 0$. It also satisfies the doubling condition: there exists a constant $A>0$ such that
	\be\label{eq:doubling}
		H(x+y) \leq A(1+H(x)+H(y)) \quad \mbox{for all $x,y\in[0,\infty)$}.
	\ee
Furthermore, the function $h\: x \mapsto x^d H(x^{-d})$ is convex and non-increasing on $(0,\infty)$. 
\end{hypo}
The assumptions in \ref{hyp:general1} are typical conditions needed for the application of many theoretical results on diffusive gradient flows, which we use throughout the paper. Note that if $H$ satisfies \ref{hyp:general1}, then $E>-\infty$ since $H$ is in this case non-negative; if $H$ satisfies \eqref{hyp:heat}, then $E>-\infty$ still holds since the probability measures that we consider have finite second moments, see \eqref{eq:energy-moment}.

The assumption that $H(0) = 0$ and $h$ is convex and non-increasing implies that the energy $E$ is displacement convex, see \cite{McCann}, \cite[Section 4]{McCann2} and \cite[Theorem 5.15]{Villani} for a detailed exposition; when $d=1$, displacement convexity of $E$ is actually equivalent to convexity of $H$. Also, when $d=1$, the monotonicity condition on $h$ is a consequence of $H$ being convex and $H(0) = 0$.

In this paper we sometimes also assume the following hypothesis on $H$, in addition to \ref{hyp:general1}.
\settheoremtag{(H2)}
\begin{hypo} \label{hyp:general2}
	$H''(x) > 0$ for all $x\in(0,\infty)$ and there exists a continuous function $f:(0,\infty)\to[0,\infty)$ such that $f(1) = 1$ and
\bes
	H''(\alpha x) \geq f(\alpha) H''(x) \quad \mbox{for all $x,\alpha \in (0,\infty)$.}
\ees
\end{hypo}
Note that the density of internal energy for the heat equation satisfies all the general assumptions in \ref{hyp:general2} and \ref{hyp:general1} but the non-negativity. Also, the classical case of the porous medium equation (that is $H(x) = x^m/(m-1)$ for $m>1$) is included in the class of functions $H$ satisfying \ref{hyp:general1} and \ref{hyp:general2}, see \cite{vazquez} for a general discussion on nonlinear diffusions.

For simplicity we give now a formal way of writing \eqref{eq:pde} as a continuum gradient flow which does not require many background notions from metric spaces. The rigorous definition requires the concept of curves of maximal slope, postponed to Section \ref{subsec:continuum}. 
Let us fix a final time $T>0$. A \emph{continuum gradient flow solution} is formally defined as a curve $\rho \: [0,T] \to \P_2(\O^d)$ such that
\be \label{eq:gradient-flow}
	\begin{cases}
		\rho'(t) = - \grad_{\P_{2}(\O^d)} E(\rho(t)),\\
		\rho(0) = \rho_0,
	\end{cases}
\ee
holds in the sense of distributions on $[0,T]\times\O^d$, see \cite[Equation (8.3.8)]{AGS}. The operator $\grad_{\P_{2}(\O^d)}$ is the \emph{quadratic Wasserstein gradient} on $\P_{2}(\O^d)$, which takes the explicit form
\bes
	\grad_{\P_{2}(\O^d)} E(\rho) = -\div \left(\rho \grad \dfrac{\delta E}{\delta \rho}\right) \quad \mbox{for all $\rho \in \P_{2}(\O^d)$},
\ees
where $\delta E/\delta \rho = H'\circ\rho$ is the first variation density of $E$ at point $\rho$, and $\circ$ is the composition operator. As a by-product of the theory of gradient flows, gradient flow solutions to \eqref{eq:gradient-flow} are weak solutions to \eqref{eq:pde} up to time $T$. For theoretical issues such as existence and uniqueness of solutions to the continuum gradient flow of the form \eqref{eq:gradient-flow}, we refer the reader to \cite{JKO,Villani,AGS} and the references therein.

In this paper we approximate solutions to the continuum gradient flow \eqref{eq:gradient-flow} by finite atomic probability measures, that is by finite numbers of particles. The basic idea is to restrict the continuum gradient flow to the discrete setting of atomic measures, while keeping the gradient flow structure at the discrete level via a suitable approximation of the energy $E$ on finite numbers of Dirac masses. Given an atomic measure, we uniformly spread the mass of each point-mass in density blobs over maximal non-overlapping balls; then, we define the entropy of the atomic measures as that of these density blobs. The fact that they do not intersect allows for a fast computation of the energy and the interactions between the point-masses. This procedure was already described in the companion paper~\cite{CHPW}, where the numerical study of this method for more general gradient flows, including confinement and interaction potentials, was performed. We refer the reader to~\cite{CHPW} for a discussion about other numerical particle methods for diffusions. The goal of this paper is to show the convergence of such a discrete gradient flow to the continuum one in one dimension in the sense given in the abstract result \cite[Theorem 2]{Serfaty}, which we recall in Theorem \ref{thm:serfaty}. In order to use this result, three ``lower semi-continuity'' conditions along gradient flow solutions must be verified: one on the metric derivatives, one on the energies, and one on the slopes of the energies. For the abstract theory of the convergence of gradient flows seen as curves of maximal slope, we also refer the reader to \cite{Ortner}. Other, less abstract approaches to prove convergence of Lagrangian schemes for fourth-order equations in one dimension have been proposed in \cite{MO1,MO2}, for one-dimensional drift diffusion equations in \cite{MO3}, as well as for higher-dimensional Fokker--Planck equations in \cite{JMO}.

Our main result, Theorem \ref{thm:gradflow}, shows the convergence in the one-dimensional case with Neumann (no-flux) boundary conditions for general nonlinear diffusions (satisfying the hypotheses given above), in the case of equally-weighted particles. In general, the main difficulty that one faces with this kind of particle approximation is to characterise the subdifferentials of the discrete gradient flows. However, in one dimension we show that in our case the discrete energy is convex, allowing for an explicit, although cumbersome, characterisation of the element of minimal norm of the subdifferential. We point out that due to the choice of non-overlapping balls we have to deal with a non-smooth gradient flow at the discrete level for which we need to work with differential inclusions. Adding a confinement or potential energy to the diffusion energy \eqref{eq:energy} is of strong interest as discussed in \cite{CHPW}; in this situation, however, the computation of the element of minimal norm is not clear even in one dimension. Another difficulty is the approximation of the entropy functional; in our case, the $\Gamma$-convergence of the approximated discrete energy towards the continuum one is not difficult to show in one dimension. However, producing a good discrete energy approximation in higher dimensions is not a trivial task, see \cite{PW}.

It is worth pointing out that, as a particle method, our discretisation is mesh-free and therefore different from classical schemes for diffusion equations involving finite differences, finite volumes or finite elements. There are several motivations for studying our method. From the theoretical point of view, which is the core of this paper, it offers a rich and concrete application of the abstract result in \cite{Serfaty} on the convergence of gradient flows. From the numerical point of view, for which we refer the reader to \cite{CHPW} for more details, the method presents at least two advantages. First, it involves simpler computations of the discrete energy and its derivatives than, for example, particle methods where the mass of each particle is spread over Voronoi cells rather than over non-overlapping balls. We believe in fact that our method offers a significant numerical advantage in higher dimensions, where the derivatives of the areas, or volumes, of the Voronoi cells do not need to be computed, as already observed in \cite{CHPW}. The second advantage is the possibility of easily adding interaction and confinement potentials to the discrete energy. Although the theoretical convergence is in this case still an open question, this was numerically studied in depth in \cite{CHPW} for the case of the modified one-dimensional Keller--Segel equation for which the authors were able to show that the critical-mass properties of the equation are preserved at the discrete level.

The paper is structured as follows. In Section \ref{sec:gradflows} we give the necessary background on metric spaces to understand the proofs and discuss the notion of continuum gradient flow; we then introduce the particle method and the discrete gradient flow. Section \ref{sec:main-result-strategy} states the main result and gives the details of the strategy we follow. In Sections \ref{sec:compactness}--\ref{sec:conditions} we verify the three ``lower semi-continuity'' conditions mentioned earlier. Finally, Section \ref{sec:extension-R} discusses the possibility of extending the main result of convergence to the whole real line, i.e., with no boundary conditions, and to general weights.

\section{The gradient flows}
\label{sec:gradflows}

\subsection{Continuum gradient flow}
\label{subsec:continuum}

As already said, the gradient flow formulation given in \eqref{eq:gradient-flow} is not the one we use here, i.e., the one that allows the use of \cite[Theorem 2]{Serfaty}. Before stating the exact definition, we need to introduce a few notions from the underlying theory of gradient flows, see \cite{AGS} for a detailed account. For the sake of generality these notions are given for any complete metric space $(X,d)$. In this section, $\phi$ denotes a proper functional from $X$ to $\R \cup \{+\infty\}$ and $I$ a bounded subinterval of $\R$. We write $D(\phi)$ the \emph{domain} of $\phi$, defined by $D(\phi) = \{v\in X \st\phi(v) < +\infty\}$; the notation $D(A)$ is also used to denote the \emph{domain} of any set-valued operator $A$ from $X$ to $2^X$, that is, $D(A) = \{v\in X \st A(v) \neq \emptyset\}$. Also, let $p\geq1$.

\begin{defn}[Absolute continuity] \label{defn:absolutely-continuous}
	We say that $v \: I \to X$ is a \emph{$p$-absolutely continuous} curve if there exists $m\in L^p(I)$ such that
\be\label{eq:ac}
	d(v(t),v(\tau)) \leq \int_\tau^t m(s) \d s \quad \mbox{for all $\tau,t \in I$ with $\tau\leq t$}.
\ee
In this case we write $v \in AC^p(I,X)$, or $v \in AC(I,X)$ if $p=1$.
\end{defn}

For any $p$-absolutely continuous curve $v\: I \to X$ the \emph{metric derivative}
\bes
	|v'|_d(t) := \lim_{\tau\to t}\frac{d(v(\tau),v(t))}{|\tau - t|}
\ees
exists for almost every $t\in I$, and $|v'|_d \in L^p(I)$. In this case $|v'|_d$ satisfies \eqref{eq:ac} in place of $m$, and $|v'|_d(t) \leq m(t)$ for almost every $t\in I$ for any $m\in L^p(I)$ satisfying \eqref{eq:ac}, see \cite[Theorem 1.1.2]{AGS}.

\begin{defn}[Strong upper gradient] \label{defn:strong-upper-gradient}
	We call $g\: X \to [0,+\infty]$ a \emph{strong upper gradient} for $\phi$ if for every $v \in AC(I,X)$ we have that $g\circ v$ is a Borel function and 
\bes
	|\phi(v(t))-\phi(v(\tau))| \leq \int_{\tau}^t g(v(s)) |v'|_d(s)\d s \quad \mbox{for all $\tau,t \in I$ with $\tau\leq t$}.
\ees
\end{defn}

\begin{defn}[Local slope] \label{defn:local slope}
	We define the \emph{local slope} of $\phi$ by
\bes
	|\p \phi|(v) = \limsup_{w \to v} \dfrac{(\phi(v) - \phi(w))_+}{d(v,w)} \quad \mbox{for all $v \in D(\phi)$},
\ees 
where the subscript $+$ denotes the positive part.
\end{defn}

\begin{defn}[Curve of maximal slope] \label{defn:cms}
	Consider $g$, a strong upper gradient for $\phi$. We say that $v \in AC(I,X)$ is a \emph{$p$-curve of maximal slope} for $\phi$ with respect to $g$ if $\phi \circ v$ is almost everywhere equal to a non-increasing function $\varphi$ and
\bes
	\textstyle\varphi'(t) \leq - \frac 1p |v'|_d(t)^p - \frac 1q g(v(t))^q \quad \mbox{for almost every $t \in I$},
\ees
where $q$ is the conjugate exponent of $p$.
\end{defn}

The definition of a $p$-curve of maximal slope can be given in more generality for weak upper gradients (see \cite[Definition 1.2.2]{AGS}), rather than strong ones. However, since in this paper we only deal with strong upper gradients, we do not need such generality. 

\begin{rem}\label{rem:prop-gradflow}
When $v$ is a $p$-curve of maximal slope for a strong upper gradient $g$, we have $g\circ v |v'|_d \in L^1(I)$, $\phi \circ v \in AC(I,\R \cup \{+\infty\})$, $\phi \circ v(t) = \varphi(t)$ for all $t \in I$, and $|v'|_d(t)^p = g(v(t))^q = -\varphi'(t) = -(\phi\circ v)'(t)$ for almost every $t\in I$ (see \cite[Remark 1.3.3]{AGS}).
\end{rem}

We can now define the notion of continuum gradient flow solution.
\begin{defn}[Continuum gradient flow solution] \label{defn:continuum-gradient-flow}
	We say that $\rho \in AC^2([0,T],\P_2(\O^d))$ is a \emph{continuum gradient flow solution} with initial condition $\rho_0 \in \P_2(\O^d)$ if it is a $2$-curve of maximal slope for $E$ with respect to $|\p E|$, and if $\rho(0) = \rho_0$.
\end{defn}
The energy $E$ being displacement convex (and narrowly lower semi-continuous, see \cite[Section 10.4.3]{AGS} for instance), Definition \ref{defn:continuum-gradient-flow} makes sense since $|\p E|$ is in this case a strong upper gradient for $E$, see \cite[Corollary 2.4.10]{AGS}. 

Alternatively to \eqref{eq:gradient-flow} and Definition \ref{defn:continuum-gradient-flow}, we recall that there exists another common way of defining a continuum gradient flow, which involves the notion of subdifferential.
\begin{defn}[Subdifferential] \label{defn:subdiff}
	If $X$ is a Hilbert space with inner product $\langle\cdot,\cdot\rangle_X$ and $\phi$ is lower semi-continuous, then the \emph{subdifferential} of $\phi$ is defined, for all $x\in D(\phi)$, by
\bes
	\p \phi(x) = \left\{z\in X \,\Big|\, \liminf_{y\to x} \frac{\phi(y) - \phi(x) - \langle z,y-x\rangle_X}{|y-x|_X} \geq 0\right\}.
\ees
If $X = \P_2(\O^d)$, $Y:=(L^2_\rho(\O^d))^d$ and $\phi$ is narrowly lower semi-continuous, then we define, for all $\rho\in D(\phi)\cap\Pac(\O^d)$,
\begin{align*}
	\p \phi(\rho) = \left\{ \xi \in Y \,\Big|\, \liminf_{\nu \to \rho} \frac{\phi(\nu) - \phi(\rho) - \int_{\O^d} \langle\xi(x),t_\rho^\nu(x) - x\rangle\d\rho(x)}{d_2(\rho,\nu)} \geq 0 \right\},
\end{align*}
where $t_\rho^\nu$ is the optimal transport map from $\rho$ to $\nu$, and $\langle\cdot,\cdot\rangle$ is the classical inner product on $\R^d$. In both cases, we write $\p^0\phi(x)$ and $\p^0\phi(\rho)$ the unique elements of minimal norm of respectively $\p\phi(x)$ and $\p\phi(\rho)$, whenever they are well-defined.
\end{defn}

We can define gradient flow solutions in the following way: $\rho \in AC^2([0,T],\P_2(\O^d))$ is a continuum gradient flow solution if $\rho(t) \in D(E)\cap \Pac(\O^d)$ for almost every $t\in[0,T]$, if there exists a Borel vector field $u(t)$ such that, for almost every $t\in[0,T]$, $u(t)$ is in the tangent space of $\P_2(\O^d)$ at $\rho(t)$, $\|u(t)\|_{L_{\rho(t)}^2(\O^d)} \in L^2([0,T])$,
\bes
	u(t) \in -\p E(\rho(t)),
\ees
and the continuity equation
\bes
	\rho'(t) + \div (\rho(t)u(t)) = 0
\ees
holds in the sense of distributions on $[0,T]\times\O^d$. Since $E$ is displacement convex and lower semi-continuous, we have existence and uniqueness of such gradient flows. Moreover, this notion of continuum gradient flows and Definition \ref{defn:continuum-gradient-flow} are equivalent; and in this case, the \emph{velocity field} $u(t) = -\p^0 E(\rho(t)) = - \grad (\delta E/\delta \rho)(t)$ exists for almost every $t\in[0,T]$, see \cite[Theorems 5.3, 5.5 and 5.8]{Ambrosio} for more details. In the following we only work with Definition \ref{defn:continuum-gradient-flow}.

\subsection{Particle method and discrete gradient flow}
\label{subsec:particle-method}
For the rest of the paper we restrict ourselves to the \emph{one-dimensional} case ($d=1$). Discussions on possible extensions to higher dimensions are given throughout the text. 

We describe now the particle method which is used to approximate the continuum gradient flow. In this method, the underlying probability measure is characterised by the particles' positions $(x_1,\dots,x_N)\in \O^N$ and the associated equal weights $w:=\left(1/N,\dots,1/N\right) \in (0,1)^N$, where $N \geq 2$ is the total number of particles considered. Throughout this paper, the positions $(x_1,\dots,x_N)$ are evolving in time but the weights $w$ are fixed. Also, we denote by $\O_w^N$ the space of particles with weights $w$, that is, $\bxN:= (x_1,\dots,x_N) \in \O_{w}^N$ means that each particle $x_i$ is in $\O$ and is associated with the weight $1/N$. Notice the boldface font when referring to elements of $\O_w^N$.

By convention, in the rest of the paper, whenever particles $\bxN\in \O_w^N$ are considered, they are assumed to be distinct and sorted increasingly, i.e., $x_{i+1} >  x_i$ for all $i \in \{1,\dots,N-1\}$.

The most natural representation of the underlying probability measure is the empirical measure
\bes
	\bxN \mapsto \mu_N = \frac1N \sum_{i=1}^N \delta_{x_{i}},
\ees
which belongs to the space of \emph{atomic measures}
\bes
	\A_{N,w}(\O) := \left\{\mu \in \P_2(\O) \,\Big|\, \exists\, \bxN \in \O_w^N, \mu = \frac1N\sum_{i = 1}^N \delta_{x_{i}} \right\}.
\ees

\begin{defn}[Inter-particle distance] \label{defn:interparticles}
	For any particles $\bx = (x_1,\dots,x_N) \in \O_w^N$ we denote the \emph{inter-particle distance} by the positive quantity (eventually $+\infty$ by convention)
\bes
	\Delta x_i := x_i-x_{i-1} \quad \mbox{for $i \in \{1,\dots,N+1\}$},
\ees
with the conventions for $x_0$ and $x_{N+1}$ given in \eqref{eq:convention1} and \eqref{eq:convention2} according to the boundary conditions considered. Furthermore, for any $i \in \{2,\dots,N\}$, the interval $[x_{i-1},x_i]$ is called the \emph{inter-particle interval}. We also write
\be\label{eq:ri}
	r_i = \min(\Delta x_i,\Delta x_{i+1}) \quad \mbox{for all $i \in \{1,\dots,N\}$}.
\ee
\end{defn}

\begin{defn}[Discrete energy] \label{defn:discrete-energy}
	We define the \emph{discrete energy} $\E \: \A_{N,w}(\O) \to \R$, for all $\mu_N \in \A_{N,w}(\O)$ with particles $\bxN \in \O_w^N$, by
\be \label{eq:energy-discrete}
	E_N(\mu_N) = \sum_{i=1}^N |B_i| H\left(\frac{1}{N|B_i|}\right) = \frac1N \sum_{i=1}^N h(N|B_i|) =  \frac1N \sum_{i=1}^N h(Nr_i),
\ee
where $h$ is as in \ref{hyp:general1}, $B_i := B_{r_i/2}(x_i)$ with $r_i$ given in \eqref{eq:ri}, and $|B_i|= r_i$.
\end{defn}
Note that $\E$ is finite over the whole $\A_{N,w}(\O)$ since $H$ is pointwise finite. The essence of this discrete approximation lies in the adequate treatment of the energy $E$, which becomes infinity on point-masses; here the mass of each particle is uniformly spread to circumvent this problem. To this end, consider
\be \label{eq:density-on-balls}
	\bxN \mapsto \rho_N = \frac1N\sum_{i = 1}^N\frac{\chi_{B_{i}}}{|B_{i}|},
\ee
where $\chi_{B_{i}}$ is the characteristic function of $B_{i}$. Clearly $\rho_N$ is in $\Pac(\R)$, and thus the energy \eqref{eq:energy} integrated on $\R$ is well-defined for $\rho_N$. An example of what $\rho_N$ looks like is given in Figure \ref{fig:rhoN}.
\bfig[!ht]
\centering
	\includegraphics[scale=1]{./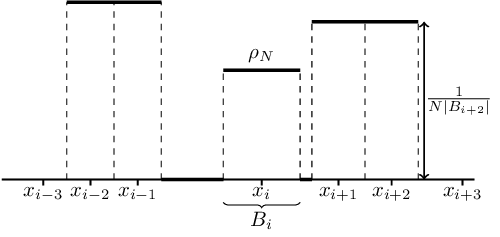}
\caption{The reconstructed piecewise constant density $\rho_N$ \label{fig:rhoN}}
\efig
The representation $\rho_N$ does not involve overlapping of balls, but involves ``gaps" between balls whose sizes are intuitively expected to decrease as the number of particles increases. More rigorously, we actually prove in Lemmas \ref{lem:bounds-distances} and \ref{lem:properties-interparticle} and in Remark \ref{rem:gaps} that, if no-flux boundary conditions are considered, gaps individually decrease like $1/N$ and their sum tends to 0 as $N$ increases. By plugging \eqref{eq:density-on-balls} into the energy \eqref{eq:energy} integrated on $\R$, one gets that $E_N$ defined above is exactly
\bes
	\E(\mu_N) = E(\rho_N) = \int_{\R} H(\rho_N(x)) \d x.
\ees
This choice of non-overlapping particles has the main advantage of reducing the computational cost of the discrete energy functional and its subdifferential. Note that $\rho_N$ may not belong to $\P_\mt{ac,2}(\O)$ if $\O$ is bounded; indeed the discretisation balls $B_1$ and $B_N$ may not be contained in $\O$ in this case.

Since the expression above depends essentially on $\bxN \in \O_w^N$, we can define the discrete energy equivalently as a function of $\bxN \in \O_w^N$ instead of $\mu_N \in \A_{N,w}(\O)$:
\be \label{eq:energy-discrete-particles}
	\wt{E}_N(\bxN) := \E(\mu_N) \quad \mbox{for all $\mu_N \in \A_{N,w}(\O)$ with particles $\bxN \in \O_w^N$}.
\ee

We give now the two possible boundary conditions we consider, depending on whether $\O$ is $\R$ or the closure of a bounded connected subset of $\R$ (with no loss of generality we take a closed ball).

\settag{\underline{\smash{Discretisation in $\R$: no boundary conditions}}}
\begin{custom}
When $\O = \R$ we define two fictitious particles 
\be\label{eq:convention1}
	x_{N+1} = -x_0 = +\infty,
\ee
so that $r_1= \Delta x_2$ and $r_N=\Delta x_N$.
\end{custom}

\settag{\underline{\smash{Discretisation in a closed ball of $\R$: no-flux boundary conditions}}}
\begin{custom}
Fix $\ell\in(0,\infty)$. When $\O = [-\ell,\ell]$ we define two fictitious particles
\be\label{eq:convention2}
	\begin{cases} x_0 = -\infty & \mbox{if $x_1 \in (-\ell,\ell)$},\\ x_0 = -2\ell -x_2 & \mbox{if $x_1 \in (-2\ell-x_2, -\ell]$}, \end{cases} \quad \begin{cases} x_{N+1} = +\infty & \mbox{if $x_N \in (-\ell,\ell)$},\\ x_{N+1} =  2\ell -x_{N-1} & \mbox{if $x_N \in [\ell,2\ell-x_{N-1})$}, \end{cases}
\ee
so that if $x_1 = -\ell$, then $r_1 = \Delta x_2 = \Delta x_1$, and similarly for $x_N$ and $r_N$.
\end{custom}

	The difference between the discretisation in $\R$ and that in $[-\ell,\ell]$ lies in the treatment of the end particles $x_1$ and $x_N$. When $\O=\R$ there are no boundary conditions to consider and therefore no restrictions on where the particles flowing according to the discrete system \eqref{eq:discrete-gradient-flow-inclusion} can move; this is allowed by the fact that the two fictitious particles are placed at infinity and therefore have no influence on the evolution of the ``real'' particles. When $\O=[-\ell,\ell]$, however, particles cannot go out of the domain; this is ensured by the presence of the two fictitious particles in \eqref{eq:convention2}. Indeed, these particles have no influence on the ``real'' ones as long as these stay contained in $(-\ell,\ell)$; when a ``real'' particle reaches the boundary of the domain, however, the fictitious particles ensure that it stays there, without having an influence on the other ``real" particles, see Lemma \ref{lem:boundary-particles-fixed}. 

Now that we have a discrete setting, we can define the discrete analogue of a continuum gradient flow solution given in Definition \ref{defn:continuum-gradient-flow}. 

\begin{defn}[Discrete gradient flow solution] \label{defn:discrete-gradient-flow}
	We say that $\mu_N \in AC^2([0,T],\A_{N,w}(\O))$ is a \emph{discrete gradient flow solution} with initial condition $\mu_N^0 \in \A_{N,w}(\O)$ if it is a $2$-curve of maximal slope for $\E$ with respect to $|\p \E|$, and if $\mu_N(0) = \mu_N^0$.
\end{defn}

Equivalently, by \eqref{eq:energy-discrete-particles}, the discrete gradient flow can be defined on $\O_w^{N}$ rather than $\A_{N,w}(\O)$. 
\begin{defn}[Discrete gradient flow solution for particles] \label{defn:discrete-gradient-flow-particles}
	We say that $\bxN \in AC^2([0,T],\O_w^N)$ is a \emph{discrete gradient flow solution (for particles)} with initial condition $\boldsymbol{x_N^0} \in \O_w^N$ if it is a $2$-curve of maximal slope for $\wtE$ with respect to $|\p \wt{E}_N|$, and if $\bxN(0) = \boldsymbol{x_N^0}$.
\end{defn}

These two formulations being equivalent, we use them interchangeably in the rest of the paper.

\begin{rem}\label{rem:upper-gradients}
	Definitions \ref{defn:discrete-gradient-flow} and \ref{defn:discrete-gradient-flow-particles} make sense since, by the proof of Proposition \ref{prop:discrete-gradient-flow-well-posed}, $\E$ and $\wtE$ are convex and lower semi-continuous, which makes sure that the respective local slopes are strong upper gradients, see \cite[Proposition 1.4.4]{AGS}.
\end{rem}

Definition \ref{defn:discrete-gradient-flow-particles} can be reformulated by means of a differential inclusion, see \cite[Proposition 1.4.1]{AGS}.

\begin{prop} \label{prop:differential-inclusion0}
	A curve $\bxN \in AC^2([0,T],\O_w^{N})$ is a discrete gradient flow solution with initial condition $\boldsymbol{x_N^0} \in \O_w^{N}$ if and only if it satisfies
\bes
	\begin{cases}
		\frac1N\bxN'(t) \in - \p \wt{E}_N(\bxN(t)) & \mbox{for almost all $t \in (0,T]$},\\
		\bxN(0) = \boldsymbol{x_N^0},
	\end{cases}
\ees
where $\bxN'$ is the speed of the curve $\bxN$.
\end{prop}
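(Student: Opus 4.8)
The plan is to view this as a direct application of the abstract equivalence between curves of maximal slope and differential inclusions in a Hilbert space, as recorded in \cite[Proposition 1.4.1]{AGS}. The key observation is that the particle configuration space $\O_w^N$, once we remember the factor $1/N$ coming from the equal weights, is isometric to a subset of a finite-dimensional Hilbert space, so the abstract Hilbert-space theory applies verbatim. First I would fix the correct inner product: by definition of the Wasserstein distance restricted to atomic measures with equal weights, for $\bxN,\byN\in\O_w^N$ (both sorted increasingly) one has $d_2(\mu_N,\nu_N)^2 = \frac1N\sum_{i=1}^N |x_i-y_i|^2$, since the optimal transport plan between two sorted atomic measures in one dimension is the monotone (identity-on-indices) coupling. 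Thus the metric on $\O_w^N$ is, up to the scaling factor $1/N$, the Euclidean metric, and $(\O_w^N,d_2)$ embeds as a convex subset of the Hilbert space $(\R^N, \langle\cdot,\cdot\rangle_w)$ with $\langle \bx,\by\rangle_w = \frac1N\sum_i x_i y_i$.

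Next I would record that, by Remark \ref{rem:upper-gradients}, $\wtE$ is convex and lower semi-continuous on $\O_w^N$, so that its local slope $|\p\wtE|$ is a strong upper gradient and, crucially, the metric local slope coincides with the norm of the minimal-norm element of the subdifferential, $|\p\wtE|(\bxN) = |\p^0\wtE(\bxN)|$, in the sense of Definition \ref{defn:subdiff} specialised to this Hilbert space. With these two facts in hand, the statement becomes exactly \cite[Proposition 1.4.1]{AGS}: for a proper, convex, lower semi-continuous functional on a Hilbert space, a $2$-absolutely continuous curve is a $2$-curve of maximal slope (for $\phi$ with respect to $|\p\phi|$) if and only if it solves the differential inclusion $v'(t)\in -\p\phi(v(t))$ for almost every $t$. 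I would then simply translate the abstract inclusion into the present notation, being careful that the gradient is taken with respect to $\langle\cdot,\cdot\rangle_w$ rather than the standard Euclidean inner product: the Riesz representation for $\langle\cdot,\cdot\rangle_w$ produces precisely the prefactor $\frac1N$ in front of $\bxN'(t)$, which is why the inclusion reads $\frac1N\bxN'(t)\in-\p\wtE(\bxN(t))$ rather than $\bxN'(t)\in-\p\wtE(\bxN(t))$.

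The only genuine point requiring care — and the step I expect to be the main obstacle — is the bookkeeping of the weight-dependent inner product, i.e.\ making the correspondence between the Wasserstein metric derivative $|\bxN'|_{d_2}$ and the Euclidean speed $\bxN'$ precise, and checking that this weighting is responsible for the $1/N$ factor. Concretely, the speed of the curve $\bxN$ measured in $d_2$ satisfies $|\bxN'|_{d_2}(t)^2 = \frac1N|\bxN'(t)|^2$ (Euclidean norm of the velocity vector), and the subdifferential $\p\wtE$ is computed in the $\langle\cdot,\cdot\rangle_w$-Hilbert structure, so the maximal-slope energy-dissipation identity of Remark \ref{rem:prop-gradflow} transcribes exactly into the stated inclusion. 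Once the isometry and the inner product are pinned down, the remainder is a direct invocation of the cited abstract result with no further analysis needed.
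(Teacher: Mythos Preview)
Your approach is correct and coincides with the paper's, which simply cites \cite[Proposition 1.4.1]{AGS} without further argument; your write-up actually supplies more detail than the paper does. One small wording slip: in your last paragraph you say ``the subdifferential $\p\wtE$ is computed in the $\langle\cdot,\cdot\rangle_w$-Hilbert structure'', but in the proposition $\p$ denotes the \emph{standard} Euclidean subdifferential---the $1/N$ prefactor (which you correctly explain just above) is exactly what converts the $\langle\cdot,\cdot\rangle_w$-subdifferential into the standard one, and the paper introduces $\p_w$ immediately afterwards precisely to absorb that factor.
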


The presence of a differential inclusion in Proposition \ref{prop:differential-inclusion0} comes from the fact that the gradient of the discrete energy $\wt{E}_N$ is not everywhere defined since it involves the minimum function. 

The formulation given in Proposition \ref{prop:differential-inclusion0} is not a standard differential inclusion because of the presence of the weights $1/N$ in the left-hand side. To cope with this, we introduce the following inner product on $\O_w^{N}$.
\begin{defn}[Weighted inner product on $\O_w^{N}$] \label{defn:weighted-ip}
	For all $\bx,\by\in\O_w^{N}$ we define the \emph{weighted inner product} between $\bx = (x_1,\dots,x_N)$ and $\by = (y_1,\dots,y_N)$ as
\bes
	\langle \bx,\by \rangle_w = \frac1N \sum_{i=1}^N \langle x_i,y_i\rangle = \frac1N \sum_{i=1}^N x_iy_i.
\ees
\end{defn}
From now on, the Euclidean space $\O_w^{N}$ is endowed with this inner product. This definition clearly induces the following weighted norm on $\O_w^{N}$.
\bes
	|\bx|_w := \sqrt{\langle \bx,\bx \rangle_w} = \sqrt{\textstyle{\frac1N\sum}_{i=1}^N |x_i|^2} \quad \mbox{for all $\bx\in\O_w^{N}$}.
\ees
It also induces a subdifferential structure: for any functional $\phi\:\O_w^{N} \to \R$,
\bes
	\p_w \phi(\bx) := \left\{\bz\in\O_w^{N} \mid \textstyle{\frac1N}\bz \in \p \phi(\bx)\right\} \quad \mbox{for all $\bx\in\O_w^{N}$},
\ees
and we can then define the element $\partial_w^0 \phi(\bx)$ with minimal norm accordingly. We can now rewrite Proposition \ref{prop:differential-inclusion0} as follows.
\begin{prop} \label{prop:differential-inclusion}
	A curve $\bxN \in AC^2([0,T],\O_w^{N})$ is a discrete gradient flow solution with initial condition $\boldsymbol{x_N^0} \in \O_w^{N}$ if and only if it satisfies
\be \label{eq:discrete-gradient-flow-inclusion}
	\begin{cases}
		\bxN'(t) \in - \p_w \wt{E}_N(\bxN(t)) & \mbox{for almost all $t \in (0,T]$},\\
		\bxN(0) = \boldsymbol{x_N^0}.
	\end{cases}
\ee
\end{prop}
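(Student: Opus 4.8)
The plan is to obtain Proposition~\ref{prop:differential-inclusion} as a direct reformulation of Proposition~\ref{prop:differential-inclusion0} via the change of inner product. Proposition~\ref{prop:differential-inclusion0} already characterises a discrete gradient flow solution $\bxN \in AC^2([0,T],\O_w^N)$ with initial datum $\boldsymbol{x_N^0}$ as a curve satisfying $\frac1N\bxN'(t) \in -\p\wtE(\bxN(t))$ for almost every $t\in(0,T]$, where $\p$ is the subdifferential attached to the \emph{standard} Euclidean inner product on $\O_w^N$ (the inner product in force before Definition~\ref{defn:weighted-ip}). Since the initial condition $\bxN(0) = \boldsymbol{x_N^0}$ is identical in both formulations, it suffices to prove that, for almost every $t$, the inclusion $\frac1N\bxN'(t) \in -\p\wtE(\bxN(t))$ is equivalent to $\bxN'(t) \in -\p_w\wtE(\bxN(t))$.

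The crucial and essentially only step is to unwind the definition of the weighted subdifferential. By the very definition given above, for any $\bx,\bz\in\O_w^N$ one has $\bz\in\p_w\wtE(\bx)$ if and only if $\frac1N\bz\in\p\wtE(\bx)$. Applying this with $\bx=\bxN(t)$ and $\bz=-\bxN'(t)$ --- which is legitimate because $\bxN$, being $2$-absolutely continuous into the finite-dimensional space $\O_w^N$, is differentiable almost everywhere, with componentwise time-derivative $\bxN'(t)$ independent of the chosen inner product --- yields immediately that $-\bxN'(t)\in\p_w\wtE(\bxN(t))$ if and only if $-\frac1N\bxN'(t)\in\p\wtE(\bxN(t))$. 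This is exactly the equivalence required, and it absorbs the factor $1/N$ in the left-hand side of Proposition~\ref{prop:differential-inclusion0}, producing the clean differential inclusion~\eqref{eq:discrete-gradient-flow-inclusion}.

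For completeness I would check that the direct definition of $\p_w$ used here coincides with the subdifferential computed with respect to $\langle\cdot,\cdot\rangle_w$ in the sense of Definition~\ref{defn:subdiff}: substituting $\langle\bz,\by-\bx\rangle_w=\langle\frac1N\bz,\by-\bx\rangle$ and $|\by-\bx|_w=\frac{1}{\sqrt N}|\by-\bx|$ into the defining difference quotient shows it equals $\sqrt N$ times the Euclidean difference quotient for $\frac1N\bz$; since this positive factor does not affect the sign of the $\liminf$ and the two norms induce the same topology (so the limit $\by\to\bx$ is unchanged), the two notions agree. I do not expect any genuine obstacle here: the argument is purely a rescaling, and the only point deserving care is to confirm that passing to the weighted inner product rescales the subdifferential by precisely the factor $1/N$ built into the definition of $\p_w$.
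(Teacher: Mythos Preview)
Your proposal is correct and follows exactly the paper's approach: the paper does not give a separate proof of Proposition~\ref{prop:differential-inclusion} but presents it as an immediate rewriting of Proposition~\ref{prop:differential-inclusion0} via the definition $\p_w\wtE(\bx)=\{\bz\mid \frac1N\bz\in\p\wtE(\bx)\}$. Your extra check that this agrees with the subdifferential of Definition~\ref{defn:subdiff} for the weighted inner product is a welcome bit of thoroughness that the paper leaves implicit.
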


The proposition below shows that, in dimension one, the gradient flow inclusion \eqref{eq:discrete-gradient-flow-inclusion} is well-posed, that is, it has one and only one solution.
\begin{prop} \label{prop:discrete-gradient-flow-well-posed}
	The discrete gradient flow inclusion \eqref{eq:discrete-gradient-flow-inclusion} is well-posed. Furthermore, the solution $\bxN$ satisfies $\bxN'(t) = -\p_w^0 \wtE(\bxN(t))$ for almost every $t \in [0,T]$.
\end{prop}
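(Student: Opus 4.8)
Prove Proposition~\ref{prop:discrete-gradient-flow-well-posed}: the differential inclusion $\bxN'(t) \in -\p_w \wtE(\bxN(t))$, $\bxN(0)=\boldsymbol{x_N^0}$, has a unique solution, and the solution obeys $\bxN'(t) = -\p_w^0 \wtE(\bxN(t))$ for a.e.\ $t$.

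**Overall approach.** The plan is to reduce everything to the classical theory of gradient flows generated by a convex lower semi-continuous functional on a Hilbert space — here $\O_w^N$ with the weighted inner product $\langle\cdot,\cdot\rangle_w$. The central fact I need is that $\wtE$ is convex and lower semi-continuous on $\O_w^N$; once this is established, Brézis--Kōmura theory (equivalently Theorem 1.4 or the generation theorem for maximal monotone operators, or the metric-space version in \cite[Theorem 2.4.15]{AGS} / \cite[Theorem 4.0.4]{AGS}) yields existence, uniqueness, and the selection of the minimal-norm element of the subdifferential along the flow, all at once.

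**Key steps, in order.**

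\noindent\emph{Step 1 (Convexity of $\wtE$).} I would first show $\wtE(\bxN)=\frac1N\sum_{i=1}^N h(Nr_i)$ is convex as a function of $\bxN\in\O_w^N$. By Hypothesis~\ref{hyp:general1} the map $h$ is convex and non-increasing on $(0,\infty)$. Each $r_i=\min(\Delta x_i,\Delta x_{i+1})$ is a minimum of two affine functions of $\bxN$, hence concave; composing a non-increasing convex function with a concave function preserves convexity, so each $\bxN\mapsto h(Nr_i)$ is convex, and the finite sum is convex. Under the fictitious-particle conventions \eqref{eq:convention1}--\eqref{eq:convention2} the boundary terms are either trivial (infinite fictitious particle, so $r_1=\Delta x_2$ etc.) or affine reflections $x_0=-2\ell-x_2$, which keep $\Delta x_1$ affine and preserve concavity of $r_1$; I would check these cases do not break convexity. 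Convexity in the weighted inner product is the same as Euclidean convexity since the norms are equivalent.

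\noindent\emph{Step 2 (Lower semi-continuity and properness).} Since $\wtE$ is real-valued and continuous on the open set where all particles are distinct (as $h$ is continuous on $(0,\infty)$), and $h(Nr_i)\to+\infty$ as $r_i\to0^+$ by superlinear growth of $H$ (equivalently the behaviour of $h$ near $0$), $\wtE$ is lower semi-continuous on $\O_w^N$, with effective domain the configurations of distinct sorted particles. This blow-up at collisions is exactly what keeps the flow inside $\O_w^N$ and prevents particle crossing.

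\noindent\emph{Step 3 (Invoking the generation theorem).} With $\wtE$ proper, convex, lower semi-continuous on the Hilbert space $(\O_w^N,\langle\cdot,\cdot\rangle_w)$, its subdifferential $\p_w\wtE$ is a maximal monotone operator. The Brézis generation theorem (or \cite[Theorem 4.0.4]{AGS}) then gives, for each initial datum $\boldsymbol{x_N^0}$ in the closure of $D(\wtE)$, a unique locally Lipschitz (indeed $AC^2$) curve solving \eqref{eq:discrete-gradient-flow-inclusion}, with the regularising property that for a.e.\ $t$ the right derivative selects the minimal-norm element: $\bxN'(t)=-\p_w^0\wtE(\bxN(t))$. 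Uniqueness follows from monotonicity of $\p_w\wtE$ via the standard Gronwall contraction estimate $\frac{d}{dt}|\bxN(t)-\byN(t)|_w^2\le0$.

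**Main obstacle.** The real work is Step~1 together with the boundary conventions in Step~2. The convexity argument itself is short, but I expect the delicate point to be verifying that the fictitious-particle definitions \eqref{eq:convention2} — particularly the reflected positions $x_0=-2\ell-x_2$ and $x_{N+1}=2\ell-x_{N-1}$ — are compatible with convexity and with the minimal-norm selection, so that boundary particles behave correctly (staying pinned at $\pm\ell$ without spuriously dragging interior particles, cf.\ Lemma~\ref{lem:boundary-particles-fixed}). One must confirm that $\wtE$ remains convex and that its subdifferential has the expected structure across the switching between the two cases in \eqref{eq:convention2}; this is where a careful, if routine, case analysis is needed rather than a slick one-line argument.
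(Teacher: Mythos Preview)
Your proposal is correct and follows essentially the same route as the paper: establish that $\wtE$ is proper, lower semi-continuous, and convex (via the observation that $h\circ\min$ is convex since $h$ is convex non-increasing and $\min$ is concave), then invoke the Br\'ezis/Aubin theory of maximal monotone operators (equivalently the gradient-flow theory of \cite{AGS}) to obtain existence, uniqueness, and the minimal-norm selection. The paper's proof is in fact terser than yours on the boundary conventions, simply running the convexity computation without separately examining the fictitious-particle cases you flag as the main obstacle.
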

\begin{proof}
	We use here a result from the theory of maximal monotone operators (see \cite{Brezis,Aubin} for example). The precise result we use is \cite[Theorem 1 of Section 3.2 and Proposition 1 of Section 3.4]{Aubin}, which states that if $\wt{E}_N \: \O_w^N \to \R \cup \{+\infty\}$ is proper, lower semi-continuous and convex, then the gradient flow inclusion \eqref{eq:discrete-gradient-flow-inclusion} has a unique solution if $\boldsymbol{x_N^0} \in D(\p_w\wtE) = \O_w^N$; furthermore, the solution $\bxN$ to \eqref{eq:discrete-gradient-flow-inclusion} is such that $\bxN'(t) = -\p_w^0 \wtE(\bxN(t))$ for almost every $t \in [0,T]$. Trivially $\wt{E}_N$ is proper and lower semi-continuous since $\min$ is continuous on $\R^2$ and $h\: x\mapsto xH(x^{-1})$ is continuous on $(0,\infty)$. We are left with showing the convexity of $\wt{E}_N$. 

Let $\lambda \in [0,1]$ and $\bx,\by \in \O_w^N$. Write, for all $a,b\in\R$, $[a,b]_\lambda = \lambda a + (1-\lambda)b$, and $r_i(\bx) = \min(\Delta x_i,\Delta x_{i+1})$ and $r_i(\by) = \min(\Delta y_i,\Delta y_{i+1})$. Since $\min$ is concave on $\R^2$ and $h$ is non-increasing and convex on $(0,\infty)$, $h\circ\min$ is convex on $[0,\infty)^2$. Then \eqref{eq:energy-discrete} gives the convexity of $\wt{E}_N$:
\begin{align*}
	\wt{E}_N(\lambda \bx + (1-\lambda)\by) &=\displaystyle \dfrac 1N \sum_{i = 1}^N h\left(N\min([\Delta x_i,\Delta y_i]_\lambda,[\Delta x_{i+1},\Delta y_{i+1}]_\lambda)\right)\\
	& \leq \dfrac{\lambda}{N} \sum_{i = 1}^N h(Nr_i(\bx)) + \dfrac{1 - \lambda}{N} \sum_{i = 1}^N h(Nr_i(\by)) = \lambda\wt{E}_N(\bx) + (1-\lambda)\wt{E}_N(\by).
\end{align*}

Note that once the convexity of $\wtE$ is shown the well-posedness of \eqref{eq:discrete-gradient-flow-inclusion} does not only follow from monotone operator theory but also from standard gradient flow theory, see \cite[Section 11.1]{AGS}.
\end{proof}

\begin{rem}\label{rem:issues}
The extension of Section \ref{subsec:particle-method} to higher dimensions presents two main issues. The first comes from the treatment of the boundary conditions. Ensuring no-flux boundary conditions in higher dimensions is common practice in the numerics of sweeping processes, where the velocity field of the considered discrete gradient flow is projected onto the tangent plane to the domain whenever a particle is on the boundary of this domain (see \cite{CSW,Venel,ET1,ET2} and the references therein for a detailed account). When $d=1$ the projection of the velocity of a particle exiting the domain onto the tangent plane of this domain is 0, which corresponds indeed to adding the two fictitious particles \eqref{eq:convention2}. The second issue is the well-posedness of the discrete gradient flow. At the continuum level we know that the energy $E$ is displacement convex in any dimension $d\geq1$. Unfortunately, we are unable to prove, or disprove, that this property is preserved at the discrete level for $d > 1$; for $d=1$, this is shown in the proof of Proposition \ref{prop:discrete-gradient-flow-well-posed}. This lack of convexity also makes it unsure that the discrete local slopes are actually strong upper gradients, see Remarks \ref{rem:upper-gradients} and \ref{rem:dim}.
\end{rem}

\section{Main result and strategy}
\label{sec:main-result-strategy}

Before stating the main result, Theorem \ref{thm:gradflow}, we introduce some notations and definitions.

\begin{defn}[Smooth set] \label{defn:smooth}
	We define the subset $\G(\O)$ of $\Pac(\O)$ as follows. We write $\rho \in \G(\O)$ if there exists $r > 0$ such that all the items below hold.
\begin{enumerate}[label=(\roman*)]
	\item $\supp\rho = [-r,r]$,
	\item $\rho\restrict{\supp\rho} \in C^1(\supp\rho)$,
	\item $\min_{\supp\rho}\rho > 0$,
	\item if $\O = [-\ell,\ell]$, then $r = \ell$.
\end{enumerate}
\end{defn}

\begin{rem} \label{rem:g-finite}
	Any $\rho \in \G(\O)$ satisfies $E(\rho) < +\infty$.
\end{rem}

\begin{defn}[Recovery sequence and well-preparedness]\label{defn:initial-set}
	Let $\rho \in \P_2(\O)$. Any $(\mu_N)_{N\geq2}$ with $\mu_N \in \A_{N,w}(\O)$ for all $N\geq2$ such that $\mu_N\wto\rho$ narrowly as $N \to \infty$ and $\limsup _{N\to \infty} \E(\mu_N) \leq E(\rho)$ is said to be a \emph{recovery sequence} for $\rho$. 

Let $(\bxN)_{N\geq2}$ be the particles of $(\mu_N)_{N\geq 2}$. We say that $(\mu_N)_{N\geq 2}$ is \emph{well-prepared} for $\rho$ if it is a recovery sequence for $\rho$ and there exist $a_1,a_2>0$ such that $a_1/N \leq \Delta x_i \leq a_2/N$ for all $i\in\{2,\dots,N\}$ and all $N\geq2$; if $\rho \in \G(\O)$, we moreover require $x_N = -x_1 = r$.
\end{defn}

An example of a well-prepared sequence for any $\rho\in\G(\O)$ is given in Lemma \ref{lem:limsup-continuous}. We can now state our main result.

\begin{thm}[Main theorem]\label{thm:gradflow}
	Let $H$ be given by \eqref{hyp:heat} or let it satisfy \ref{hyp:general1}. Suppose that $\mu_N \in AC^2([0,T],\A_{N,w}(\O))$, with particles $\bxN \in AC^2([0,T],\O_w^N)$, is a discrete gradient flow solution with initial condition $\mu_N^0 \in \A_{N,w}(\O)$, with particles $\boldsymbol{x_N^0} \in \O_w^N$. Let $\rho_0 \in \G(\O)$ and assume that $(\mu_N^0)_{N\geq2}$ is well-prepared for $\rho_0$ according to Definition \ref{defn:initial-set}. Then there exists $\rho\in AC^2([0,T],\P_2(\O))$ such that $\mu_N(t)\wto \rho(t)$ narrowly as $N\to\infty$ for all $t \in [0,T]$. Moreover, if $\O = [-\ell,\ell]$ and $H$ satisfies \ref{hyp:general2}, then $\rho$ is the continuum gradient flow solution associated to \eqref{eq:pde} in the sense of Definition \ref{defn:continuum-gradient-flow}, and
\be \label{eq:limits}
	\begin{cases}
		\displaystyle\lim_{N\to\infty} |\mu_N'|_{d_2} = |\rho'|_{d_2} \quad \mbox{in $L^2([0,T])$},\\
		\displaystyle \lim_{N\to\infty} \E(\mu_N(t)) = E(\rho(t)) \quad \mbox{for all $t \in [0,T]$},\\
		\displaystyle\lim_{N\to\infty}|\p \E(\mu_N)| = |\p E(\rho)| \quad \mbox{in $L^2([0,T])$}.
	\end{cases}
\ee 
\end{thm}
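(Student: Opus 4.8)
The plan is to deduce the whole statement from Serfaty's abstract convergence theorem (Theorem \ref{thm:serfaty}), applied to the sequence of discrete curves of maximal slope $\mu_N$ and the continuum energy $E$. That theorem reduces the convergence of gradient flows to three ``$\liminf$'' inequalities verified along the flows --- one for the metric derivatives, one for the energies, and one for the local slopes --- which are exactly the conditions announced in the introduction and established in Sections \ref{sec:compactness}--\ref{sec:conditions}. Granting them, the identification of $\rho$ as a curve of maximal slope and the three limits in \eqref{eq:limits} follow from a single chaining of the discrete and continuum energy-dissipation relations. The work therefore splits into a compactness step producing $\rho$, and the verification of the three inequalities.

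First I would carry out the compactness step, which already gives the first assertion. Each $\mu_N$ is a $2$-curve of maximal slope for $\E$, so by Remark \ref{rem:prop-gradflow} it satisfies the energy-dissipation equality
\bes
\E(\mu_N(0)) = \E(\mu_N(t)) + \frac12\int_0^t |\mu_N'|_{d_2}(s)^2 \d s + \frac12\int_0^t |\p \E|(\mu_N(s))^2 \d s.
\ees
Well-preparedness bounds $\E(\mu_N(0))$ uniformly in $N$, and $\E$ is bounded below (by $0$ under \ref{hyp:general1}, or via the second moment \eqref{eq:energy-moment} under \eqref{hyp:heat}), so the metric speeds $|\mu_N'|_{d_2}$ are bounded in $L^2([0,T])$ uniformly in $N$. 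This yields uniform $1/2$-H\"older equicontinuity of $t\mapsto\mu_N(t)$ in $(\P_2(\O),d_2)$; together with tightness --- automatic when $\O=[-\ell,\ell]$ is compact, and otherwise coming from the support and moment control furnished by the bounds $a_1/N\le\Delta x_i\le a_2/N$ and Lemmas \ref{lem:bounds-distances}--\ref{lem:properties-interparticle} --- a refined Ascoli-type compactness result (\cite[Proposition 3.3.1]{AGS}) extracts a subsequence converging narrowly, for every $t$, to some $\rho\in AC^2([0,T],\P_2(\O))$; uniqueness of the continuum flow later upgrades this to convergence of the full sequence.

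Next I would run Serfaty's chaining. The energy inequality $E(\rho(t))\le\liminf_N\E(\mu_N(t))$ is the $\Gamma$-$\liminf$ direction for the functionals, the easy half in one dimension (the matching recovery-sequence half is built into well-preparedness and used at $t=0$), and the metric-derivative inequality $\int_0^t|\rho'|_{d_2}^2\le\liminf_N\int_0^t|\mu_N'|_{d_2}^2$ follows from lower semicontinuity of the $L^2$-norm of metric speeds under narrow convergence of the curves. Taking $\liminf$ in the displayed identity, using superadditivity of $\liminf$ on the right-hand side together with the three inequalities, and $\limsup_N\E(\mu_N(0))\le E(\rho_0)=E(\rho(0))$ on the left, gives
\bes
E(\rho(0)) \geq E(\rho(t)) + \frac12\int_0^t |\rho'|_{d_2}(s)^2 \d s + \frac12\int_0^t |\p E|(\rho(s))^2 \d s.
\ees
Since $E$ is displacement convex and lower semicontinuous, $|\p E|$ is a strong upper gradient for $E$, so the reverse inequality holds by Young's inequality; hence equality, and $\rho$ is the $2$-curve of maximal slope of Definition \ref{defn:continuum-gradient-flow}. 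Equality forces each of the three $\liminf$'s to be a genuine limit, which is precisely \eqref{eq:limits}.

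The main obstacle is the remaining slope inequality $\int_0^t|\p E|(\rho)^2\le\liminf_N\int_0^t|\p\E|(\mu_N)^2$, and this is where \ref{hyp:general2} and the restriction $\O=[-\ell,\ell]$ enter. The continuum slope is
\bes
|\p E|(\rho)^2 = \int_\O \rho\,|\p_x (H'(\rho))|^2 \d x,
\ees
whereas $|\p\E|(\mu_N)=|\p_w^0\wtE(\bxN)|_w$ is the weighted norm of the minimal-norm subgradient of the non-smooth, $\min$-based discrete energy, well-defined by the convexity established in Proposition \ref{prop:discrete-gradient-flow-well-posed}. I would first make this minimal selection explicit, reading off each component as a discrete analogue of $\p_x(H'(\rho))$ built from differences of $h'(Nr_i)$; the delicate points are the correct resolution of the ties in the $\min$ (the differential-inclusion structure) and of the two end particles, for which the fictitious-particle convention \eqref{eq:convention2} and the fact that boundary particles stay fixed (Lemma \ref{lem:boundary-particles-fixed}) are used. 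Passing this discrete sum to the continuum integral as a Riemann-type limit --- with the regularity $\rho_0\in\G(\O)$ propagated in time, and the quantitative bound $H''(\alpha x)\ge f(\alpha)H''(x)$ from \ref{hyp:general2} used to compare the $h'$-differences with $\p_x(H'(\rho))$ uniformly --- is the technical heart of the argument.
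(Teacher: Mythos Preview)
Your overall strategy---reduce everything to Serfaty's Theorem \ref{thm:serfaty} by establishing compactness plus the three $\liminf$ inequalities, then chain the discrete and continuum energy-dissipation identities---is exactly the paper's. The compactness step and conditions \ref{cond:md}--\ref{cond:liminf} are essentially as in Sections \ref{sec:compactness}--\ref{sec:gamma-convergence}; one small correction is that for $\O=\R$ the tightness comes from the second-moment bound of Lemma \ref{lem:bounds-moment-energy} (via the Carleman estimate \eqref{eq:energy-moment}), not from Lemmas \ref{lem:bounds-distances}--\ref{lem:properties-interparticle}, which are stated and proved only for $\O=[-\ell,\ell]$ and in any case sit in Section \ref{sec:conditions}, after compactness.

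The genuine gap is in your handling of \ref{cond:slopes}. You propose to compare the discrete slope directly with $|\p E|(\rho(t))$ through a ``Riemann-type limit'', relying on ``the regularity $\rho_0\in\G(\O)$ propagated in time'' so that the $h'$-differences can be matched with $\p_x(H'(\rho))$. This is circular: at the moment you need \ref{cond:slopes}, $\rho$ is only a narrow limit of atomic measures and nothing yet guarantees $\rho(t)\in W^{1,1}$ or even $|\p E|(\rho(t))<+\infty$; such regularity is a \emph{consequence} of identifying $\rho$ as the continuum gradient flow, which in turn requires \ref{cond:slopes}. The paper bypasses this by never comparing with $\rho$ directly. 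Instead it constructs an explicit interpolant $\trho$ from the particle positions (see \eqref{eq:rho-tilde-l} and \eqref{eq:rho-tilde-general}), computes $g(\trho)$ in closed form, shows $g(\trho)^2\le(1+o(1))\,g_N(\mu_N)^2$ using the uniform ratio control \eqref{eq:uniform-ratios} (itself obtained from the discrete minimum/maximum principles of Lemma \ref{lem:bounds-distances}), proves $\trho\wto\rho$ narrowly (Lemmas \ref{lem:weak} and \ref{lem:weak-general}), and then invokes the narrow lower semicontinuity of the local slope $g=\sqrt{I}$ to conclude $g(\rho)\le\liminf_N g(\trho)\le\liminf_N g_N(\mu_N)$. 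The role of \ref{hyp:general2} is also more specific than you indicate: it is used to ensure $h''>0$ so that the monotone structure of Lemma \ref{lem:cases} persists in the general case, and to pass from the unnormalised interpolant $\nu_N=m_N\trho$ back to $\trho$ at the very end of Section \ref{subsubsec:proof-slopes-general}, rather than to compare $h'$-differences with $\p_x(H'(\rho))$.
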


\begin{rem} \label{rem:stability-initial}
	In Theorem \ref{thm:gradflow} it can actually be proved that the convergence of $\mu_N(t)$ to $\rho(t)$ is stronger than narrow; it is indeed in $d_p$ for any $1\leq p <2$, where $d_p$ is the $p$th Wasserstein distance defined analogously to \eqref{eq:2-wasserstein}, see the proof of Lemma \ref{lem:compactness}. Obviously, when $\O=[-\ell,\ell]$ we actually have $d_2$-convergence since narrow and $d_2$-convergences are then equivalent. 

	Suppose that we had the convergence of the gradient flow regardless of $\O$ being $[-\ell,\ell]$ or $\R$---see Theorem \ref{thm:gradflow-R} for an attempt at such a generalisation. Then we make the following remark. In our main theorem we assume some regularity on the initial datum: $\rho_0 \in \G(\O)$. If we want to start with a general $\rho_0 \in \P_2(\O)$, then we can use the stability property of the initial conditions with respect to $d_2$; that is, if $\rho_1$ and $\rho_2$ are two continuum gradient flow solutions in $AC^2([0,T],\P_2(\O))$ with respective initial conditions $\rho_1^0$ and $\rho_2^0$ in $\P_2(\O)$, then
\be \label{eq:stability-initial}
	d_2(\rho_1(t),\rho_2(t)) \leq d_2(\rho_1^0,\rho_2^0) \quad \mbox{for all $t \in [0,T]$},
\ee
see \cite[Theorem 5.5]{Ambrosio}. Let $1\leq p <2$ and consider $\rho_0\in\P_2(\O)$ and $\rho_0^\delta\in\G(\O)$ for all $\delta > 0$, two initial data such that $d_2(\rho_0,\rho_0^\delta) \to 0$ as $\delta \to 0$. Assume that $\mu_{N}^\delta$ is a discrete gradient flow solution which is well-prepared initially for $\rho_0^\delta$. Then, by Theorem \ref{thm:gradflow} and what observed above, the continuum gradient flow solution $\rho_\delta \in AC^2([0,T],\P_2(\O))$ emanating from $\rho_0^\delta$ is such that $d_p(\mu_{N}^\delta(t),\rho_\delta(t)) \to 0$ as $N \to \infty$ for all $t\in[0,T]$. Let the continuum gradient flow solution emanating from $\rho_0$ be $\rho \in AC^2([0,T],\P_2(\O))$. Now, by the triangular inequality, the non-decreasing monotonicity of the sequence $(d_p)_{p\geq1}$ and \eqref{eq:stability-initial}, for all $t \in [0,T]$ we have
\bes
	d_p(\mu_{N}^\delta(t),\rho(t)) \leq d_p(\mu_{N}^\delta(t),\rho_\delta(t)) + d_p(\rho_\delta(t),\rho(t)) \leq d_p(\mu_{N}^\delta(t),\rho_\delta(t)) + d_2(\rho_0^\delta,\rho_0).
\ees
Thus, for all $\delta > 0$, there exists $N(\delta)\geq2$ such that $d_p(\mu_{N(\delta)}^\delta(t),\rho(t)) \leq  \delta+ d_2(\rho_0^\delta,\rho_0)$. Then
\bes
	\lim_{\delta\to0} d_p(\mu_{N(\delta)}^\delta(t),\rho(t)) = 0.
\ees
Hence the continuum gradient flow $\rho$ is well approximated by the subsequence $(\mu_{N(\delta)}^\delta)_{\delta>0}$ as $\delta\to0$.
\end{rem}

To prove Theorem \ref{thm:gradflow}, we want to use \cite[Theorem 2]{Serfaty}, which we state below in our context.
\begin{thm}[\cite{Serfaty}]\label{thm:serfaty}
	Let $\mu_N \in AC^2([0,T],\A_{N,w}(\O))$ be a discrete gradient flow according to Definition \ref{defn:discrete-gradient-flow}. Assume that $\mu_N(t) \wto \rho(t)$ narrowly as $N\to\infty$ for all $t \in [0,T]$ for some $\rho\in AC^2([0,T],\P_2(\O))$. Suppose furthermore that $(\mu_N(0))_{N\geq2}$ is a recovery sequence for $\rho(0)$ according to Definition \ref{defn:initial-set}, and that the following conditions hold for all $t\in[0,T]$.
\begin{enumerate}[label=(C\arabic*)]
	\item \label{cond:md} $\displaystyle \liminf_{N\to \infty} \int_0^t |\mu_N'|_{d_2}(s)^2 \d s \geq \int_0^t|\rho'|_{d_2}(s)^2\d s$.\\
	\item \label{cond:liminf} $\displaystyle \liminf_{N\to\infty} \E(\mu_N(t)) \geq \displaystyle E(\rho(t))$.\\
	\item \label{cond:slopes} $\displaystyle \liminf_{N\to \infty} |\p\E|(\mu_N(t)) \geq |\p E|(\rho(t))$.
\end{enumerate}
Then $\rho$ is a continuum gradient flow according to Definition \ref{defn:continuum-gradient-flow}, and \eqref{eq:limits} holds.
\end{thm}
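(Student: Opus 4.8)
\emph{Strategy.} The plan is to run the Sandier--Serfaty energy-dissipation argument on the single, fixed metric space $(\P_2(\O),d_2)$, in which all the curves $\mu_N$ and the limit $\rho$ live (so the metric derivatives are all computed with respect to the same $d_2$). The starting point is that each $\mu_N$, being a $2$-curve of maximal slope, satisfies the sharp \emph{energy-dissipation equality}: by Remark \ref{rem:prop-gradflow} the function $\E\circ\mu_N$ is absolutely continuous and $(\E\circ\mu_N)'(s)=-\tfrac12|\mu_N'|_{d_2}(s)^2-\tfrac12|\p\E|(\mu_N(s))^2$ for almost every $s$, so integrating over $[0,t]$ gives
\bes
	\E(\mu_N(0))-\E(\mu_N(t)) = \frac12\int_0^t |\mu_N'|_{d_2}(s)^2\d s + \frac12\int_0^t |\p\E|(\mu_N(s))^2\d s.
\ees
I would then pass to the $\liminf$ in $N$ in this identity, at each fixed $t$.

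\emph{Lower bound, giving the continuum energy-dissipation inequality.} For the left-hand side, the recovery-sequence hypothesis gives $\limsup_N\E(\mu_N(0))\leq E(\rho(0))$, while \ref{cond:liminf} at $t=0$ furnishes the matching lower bound, so $\E(\mu_N(0))\to E(\rho(0))$; applying \ref{cond:liminf} again at time $t$ then yields $\limsup_N[\E(\mu_N(0))-\E(\mu_N(t))]\leq E(\rho(0))-E(\rho(t))$. For the right-hand side, \ref{cond:md} controls the metric-derivative term, and since the slopes are nonnegative, Fatou's lemma combined with the pointwise bound \ref{cond:slopes} gives $\liminf_N\int_0^t|\p\E|(\mu_N)^2\d s\geq\int_0^t|\p E|(\rho)^2\d s$. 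Using that the two sides of the discrete identity are equal, I obtain the continuum energy-dissipation \emph{inequality}
\bes
	E(\rho(0))-E(\rho(t)) \geq \frac12\int_0^t |\rho'|_{d_2}(s)^2\d s + \frac12\int_0^t |\p E|(\rho(s))^2\d s \qquad\text{for all } t\in[0,T].
\ees

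\emph{Closing the loop.} The reverse inequality is free: since $E$ is displacement convex and narrowly lower semicontinuous, $|\p E|$ is a strong upper gradient for $E$, so Definition \ref{defn:strong-upper-gradient} and Young's inequality give $E(\rho(0))-E(\rho(t))\leq\int_0^t|\p E|(\rho)\,|\rho'|_{d_2}\d s\leq\tfrac12\int_0^t|\rho'|_{d_2}^2\d s+\tfrac12\int_0^t|\p E|(\rho)^2\d s$. Hence both inequalities are equalities for every $t$; writing $E\circ\rho$ as $E(\rho(0))$ minus the (nonnegative, absolutely continuous) dissipation integral shows that $E\circ\rho$ is non-increasing with $(E\circ\rho)'(t)=-\tfrac12|\rho'|_{d_2}(t)^2-\tfrac12|\p E|(\rho(t))^2$ almost everywhere, which is exactly Definition \ref{defn:cms} with $p=2$. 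Thus $\rho$ is a continuum gradient flow solution in the sense of Definition \ref{defn:continuum-gradient-flow}.

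\emph{The limits and the main obstacle.} Now that the chain of inequalities is pinched to equalities, $E(\rho(0))-E(\rho(t))$ equals both the $\limsup$ and the $\liminf$ of $\E(\mu_N(0))-\E(\mu_N(t))$, forcing $\E(\mu_N(t))\to E(\rho(t))$ for every $t$ (the second limit in \eqref{eq:limits}). Consequently the discrete dissipation $\tfrac12\int_0^t|\mu_N'|_{d_2}^2+\tfrac12\int_0^t|\p\E|(\mu_N)^2$ converges to the sum of the two lower bounds supplied by \ref{cond:md} and \ref{cond:slopes}; the elementary fact that $a_N+b_N\to a+b$ with $\liminf a_N\geq a$ and $\liminf b_N\geq b$ forces $a_N\to a$ and $b_N\to b$ then gives $\int_0^t|\mu_N'|_{d_2}^2\d s\to\int_0^t|\rho'|_{d_2}^2\d s$ and $\int_0^t|\p\E|(\mu_N)^2\d s\to\int_0^t|\p E|(\rho)^2\d s$ for all $t$, and in particular convergence of the full $L^2([0,T])$-norms. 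To upgrade this to the strong $L^2$ convergence asserted in \eqref{eq:limits}, I would extract from $|\mu_N'|_{d_2}$ (resp.\ $|\p\E|(\mu_N)$) a weakly convergent subsequence, identify its weak limit as $|\rho'|_{d_2}$ (resp.\ $|\p E|(\rho)$) using the lower semicontinuity of the metric derivative (resp.\ testing \ref{cond:slopes} against nonnegative functions via Fatou), and then combine weak convergence with convergence of norms to obtain strong convergence; a standard subsequence argument passes this to the full sequence. I expect this last upgrade---reconciling weak compactness, lower semicontinuity and norm convergence to turn the integrated $\liminf$ bounds into genuine strong $L^2$ convergence---to be the main technical obstacle, the remainder being a clean application of the energy-dissipation identity together with the strong-upper-gradient property.
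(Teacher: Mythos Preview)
The paper does not give its own proof of Theorem~\ref{thm:serfaty}; it is quoted from \cite[Theorem~2]{Serfaty} and used as a black box. Your outline is precisely the Sandier--Serfaty energy-dissipation argument that underlies that reference: start from the discrete energy-dissipation identity, pass the left-hand side with the recovery-sequence hypothesis and \ref{cond:liminf}, pass the right-hand side with \ref{cond:md} and Fatou applied to \ref{cond:slopes}, close the loop via the strong-upper-gradient property of $|\p E|$, and then squeeze to extract the limits. So there is nothing to compare against in this paper, and your approach is the canonical one.

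Two minor comments on your write-up. First, the step ``$a_N+b_N\to a+b$ with $\liminf a_N\geq a$, $\liminf b_N\geq b$ forces $a_N\to a$, $b_N\to b$'' is fine, but note that to feed it you need the full limit of the sum, not just the $\liminf$; this you do obtain, since the chain pinches $\limsup$ and $\liminf$ of the dissipation to the same value $E(\rho(0))-E(\rho(t))$. Second, for the upgrade to strong $L^2$ convergence of $|\mu_N'|_{d_2}$, the identification of the weak limit is not quite ``$=|\rho'|_{d_2}$'' directly: what the lower-semicontinuity argument (as in the paper's Lemma~\ref{lem:compactness}) gives is a weak limit $v$ with $v\geq|\rho'|_{d_2}$ a.e. You then use the norm convergence $\int_0^T|\mu_N'|_{d_2}^2\to\int_0^T|\rho'|_{d_2}^2$ together with weak lower semicontinuity of the norm to get $\int v^2\leq\int|\rho'|_{d_2}^2$, which, combined with $v\geq|\rho'|_{d_2}\geq0$, forces $v=|\rho'|_{d_2}$; only then does ``weak $+$ norm $=$ strong'' apply. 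The same refinement handles the slope term. With these tweaks your sketch is complete.
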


\begin{rem}\label{rem:dim}
	As stated in \cite{Serfaty} for all $d\geq1$, Theorem \ref{thm:serfaty} actually requires $|\p E|$ and $|\p\E|$ to be strong upper gradients; as already discussed below Definition \ref{defn:continuum-gradient-flow} and in Remark \ref{rem:upper-gradients}, this is surely the case for $|\p E|$ in any dimension and for $|\p \E|$ when $d=1$. If $d>1$, it is less clear for $|\p\E|$ since we are not able to prove, or disprove, the convexity of $\E$. This, along with those mentioned in Remark \ref{rem:issues}, is one of the reasons why we restrict ourselves to $d=1$ for the proof of Theorem \ref{thm:gradflow}. Other reasons are the simplifications induced by the natural increasing ordering of particles, and the possibility of computing the subdifferential explicitly, see Lemmas \ref{lem:subdiff-EN} and \ref{lem:cases}.
\end{rem}

The result of Theorem \ref{thm:gradflow} has two main parts: the compactness part, which shows the existence of the limiting $\rho$, and the convergence part, which shows that this $\rho$ is indeed the continuum gradient flow solution. The proof of the second part entirely relies on Theorem \ref{thm:serfaty}, and therefore reduces to showing \ref{cond:md}--\ref{cond:slopes}. In order, we first show the compactness part of the result and \ref{cond:md}, and then \ref{cond:liminf} and \ref{cond:slopes}. Let us remark that the condition that $H$ satisfies \ref{hyp:general2} in the main theorem is actually only needed in the proof of \ref{cond:slopes}.
	
The restriction of the convergence part to $\O = [-\ell,\ell]$ stems from the difficulty of treating the gaps between inter-particle intervals when $\O = \R$. The conditions \ref{cond:md} and \ref{cond:liminf} are actually shown for $\O = \R$ as well, whereas the condition \ref{cond:slopes} is the one that requires $\O = [-\ell,\ell]$. The possibility of extending the proof of \ref{cond:slopes} to no boundary conditions is discussed in Section \ref{sec:extension-R}.


\section{Condition on the metric derivatives and compactness result}
\label{sec:compactness}

We justify the existence of the limiting $\rho$ of Theorem \ref{thm:gradflow} and show \ref{cond:md}. To this end we first give in Lemma \ref{lem:carleman} two Carleman-type estimates relating the continuum energy and the second moment. An estimate similar to \eqref{eq:energy-l1} can be found in \cite[Lemma 2.2]{Blanchet} and \cite{JKO}. We denote by $M_2(\rho) := \int_{\O} |x|^2 \d\rho(x)$ the second moment of $\rho$, for any $\rho \in \P_2(\O)$. Note first that Lemmas \ref{lem:carleman} and \ref{lem:bounds-moment-energy} are only stated for $\O=\R$ because their proofs are much easier if $\O = [-\ell,\ell]$. Indeed, in this case Lemma \ref{lem:carleman} comes from the convexity of $H$ and the use of Jensen's inequality to get
\bes
	E(\rho) \geq h(2\ell) \quad \mbox{if $\rho \in \P_2([-\ell,\ell])$}, \quad \norm{H\circ \rho}_{L^1([-\ell,\ell])} \leq E(\rho) + 4\ell \quad \mbox{if $\rho \in \Pac([-\ell,\ell])$},
\ees
and Lemma \ref{lem:bounds-moment-energy} is trivial.

\begin{lem}\label{lem:carleman}
	Let $\O = \R$, and let $H$ satisfy \ref{hyp:general1} or be as in \eqref{hyp:heat}. For all $\delta > 0$ and $\rho \in \P_2(\R)$,
	\be \label{eq:energy-moment}
		E(\rho) \geq -K_\delta - \delta M_2(\rho), 
	\ee
	where $K_\delta := \sqrt{2\pi/\delta}$, and, if $\rho \in \Pac(\R)$, 
	\be \label{eq:energy-l1}
		\norm{H\circ \rho}_{L^1(\R)} \leq E(\rho) + 2\delta M_2(\rho) + 2K_\delta.
	\ee
\end{lem}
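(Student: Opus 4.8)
The plan is to reduce immediately to the heat-equation density \eqref{hyp:heat}, because for $H$ satisfying \ref{hyp:general1} both estimates are essentially free. Indeed, there $H\geq0$, so if $\rho\in\Pac(\R)$ then $\norm{H\circ\rho}_{L^1(\R)}=\int_\R H(\rho)\,\der x=E(\rho)$, which is both $\geq0\geq -K_\delta-\delta M_2(\rho)$ and $\leq E(\rho)+2\delta M_2(\rho)+2K_\delta$; and if $\rho\in\P_2(\R)\setminus\Pac(\R)$ then $E(\rho)=+\infty$, so \eqref{eq:energy-moment} is trivial. Hence from now on I would take $H(x)=x\log x$, and for \eqref{eq:energy-l1} assume $\rho\in\Pac(\R)$ (otherwise $E(\rho)=+\infty$ and the right-hand side is infinite).

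The whole argument then rests on a single pointwise bound. From the elementary inequality $s\log s\geq s\log a - a$, valid for all $s\geq0$ and $a>0$ (as one checks by minimising $s\mapsto s\log s - s\log a + a$ over $s\geq0$), the choice $a=e^{-\delta x^2}$ gives, for almost every $x\in\R$,
\be
	\rho(x)\log\rho(x)\;\geq\; -\delta x^2\rho(x) - e^{-\delta x^2}.
\ee
Since the right-hand side is integrable (because $M_2(\rho)<\infty$ and $\int_\R e^{-\delta x^2}\,\der x=\sqrt{\pi/\delta}<\infty$), this bound first of all shows that the negative part of $\rho\log\rho$ is integrable, so that $E(\rho)$ is well-defined in $(-\infty,+\infty]$ — a point that must be settled before any manipulation of $\int_\R\rho\log\rho$.

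For \eqref{eq:energy-moment} I would just integrate the pointwise bound over $\R$ to get
\be
	E(\rho)=\int_\R \rho\log\rho\,\der x \;\geq\; -\delta\int_\R x^2\rho\,\der x - \int_\R e^{-\delta x^2}\,\der x \;=\; -\delta M_2(\rho) - \sqrt{\pi/\delta},
\ee
and conclude using $\sqrt{\pi/\delta}\leq\sqrt{2\pi/\delta}=K_\delta$. For \eqref{eq:energy-l1} the key is the identity $\int_\R|\rho\log\rho|\,\der x = E(\rho) + 2\int_\R(\rho\log\rho)_-\,\der x$, where $(g)_-=\max(-g,0)$; it holds whenever $E(\rho)<+\infty$ (and is vacuous otherwise, the right-hand side of \eqref{eq:energy-l1} then being $+\infty$). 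The same pointwise bound yields $(\rho\log\rho)_-\leq \delta x^2\rho + e^{-\delta x^2}$, so $\int_\R(\rho\log\rho)_-\,\der x\leq \delta M_2(\rho)+\sqrt{\pi/\delta}\leq \delta M_2(\rho)+K_\delta$, and substituting into the identity gives \eqref{eq:energy-l1}.

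I do not expect a serious obstacle: this is a classical Carleman-type estimate, and everything is driven by the one weighted inequality above. The only points requiring genuine care are (i) establishing that $E(\rho)$ is well-defined before splitting $\int\rho\log\rho$ into positive and negative parts — which is exactly what the integrable pointwise lower bound provides — and (ii) the bookkeeping separating the cases \eqref{hyp:heat} and \ref{hyp:general1}, together with the harmless slack $\sqrt{\pi/\delta}\leq K_\delta$ in the constant.
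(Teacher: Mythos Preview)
Your proof is correct and arguably cleaner than the paper's. Both arguments dispose of the \ref{hyp:general1} case trivially and focus on $H(x)=x\log x$, but the core estimates diverge. The paper decomposes the region $\{\rho\leq 1\}$ into two sets according to whether $\rho(x)\leq e^{-\delta|x|^2}$ or not, and applies a different elementary bound on each piece (namely $s|\log s|\leq\sqrt{s}$ on the first and monotonicity of $|\log|$ on $(0,1]$ on the second), obtaining $-\int H_-(\rho)\,\der x\leq K_\delta+\delta M_2(\rho)$ directly, with the constant $K_\delta=\sqrt{2\pi/\delta}$ arising as $\int_\R e^{-\delta x^2/2}\,\der x$. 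Your approach instead uses a single global pointwise inequality $s\log s\geq s\log a - a$ (essentially the tangent-line inequality for the convex function $s\mapsto s\log s$, or equivalently Young's inequality for the conjugate pair $s\log s-s$ and $e^t$) with $a=e^{-\delta x^2}$, which yields the slightly sharper constant $\sqrt{\pi/\delta}$ that you then relax to $K_\delta$. Your route avoids the case split entirely and makes the integrability of the negative part transparent in one line; the paper's has the minor cosmetic advantage of hitting $K_\delta$ on the nose rather than with slack.
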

\begin{proof}
Let $\delta>0$. If $H$ satisfies \ref{hyp:general1}, then the two inequalities are trivial since $E(\rho) \geq 0$ for all $\rho \in \P_2(\R)$ and $\norm{H\circ \rho}_{L^1(\R)} = E(\rho)$ for all $\rho \in \Pac(\R)$.

Suppose now that $H$ is the density of internal energy for the heat equation. We first prove \eqref{eq:energy-moment} in a way inspired by \cite[Lemma 4.1]{Erbar} and \cite[Section 4]{JKO}. If $\rho \not\in \Pac(\R)$, then the result is trivial since $E(\rho) = +\infty$ by definition. Let $\rho \in \Pac(\R)$ and split the density of internal energy as
\bes
	H(\rho(x)) = H_+(\rho(x)) + H_-(\rho(x)),
\ees
where the subscripts $+$ and $-$ denote respectively the positive and negative parts; here we choose to define the negative part as being negative. Write $I_\delta := \{x \in \R \mid \rho(x) \leq \exp(-\delta |x|^2)\}$ and $J_\delta := \{x \in \R \mid \exp(-\delta |x|^2) < \rho(x) \leq 1\}$ and recall that $x|\log x| \leq \sqrt{x}$ for all $x\in[0,1]$ and that $x\mapsto |\log x|$ is decreasing on $(0,1]$. Compute
\begin{align*}
	-\int_{\R} H_-(\rho(x)) \d x &= \int_{\{y \in \R \mid \rho(y) \leq 1\}} \rho(x) |\log\rho(x)| \d x= \int_{I_\delta} \rho(x) |\log\rho(x)| \d x + \int_{J_\delta} \rho(x) |\log\rho(x)| \d x\\
	&\leq \int_{I_\delta} \sqrt{\rho(x)} \d x + \int_{J_\delta} \rho(x) |\log (\exp(-\delta |x|^2))| \d x\\
	&\leq \int_{\R} e^{-\delta|x|^2/2} \d x + \delta \int_{\R} |x|^2 \rho(x) \d x = K_\delta + \delta M_2(\rho),
\end{align*}
which shows \eqref{eq:energy-moment} as $H_+(\rho(x)) \geq 0$ for all $x\in\R$. To prove \eqref{eq:energy-l1} use the computation above to get
\begin{align*}
	\norm{H\circ \rho}_{L^1(\R)} &= \int_{\{y \in \R \mid \rho(y) \leq 1\}} \rho(x) |\log\rho(x)| \d x + \int_{\{y \in \R \mid \rho(y) > 1\}} \rho(x) \log\rho(x) \d x\\
	&\leq K_\delta + \delta M_2(\rho) + E(\rho) + \int_{\{y \in \R \mid \rho(y) \leq 1\}} \rho(x) |\log\rho(x)| \d x \leq 2K_\delta + 2\delta M_2(\rho) + E(\rho),
\end{align*}
which is \eqref{eq:energy-l1}.
\end{proof}

\begin{lem}\label{lem:bounds-moment-energy}
	Take $\O = \R$. Let $H$ be as in \eqref{hyp:heat} or let it satisfy \ref{hyp:general1}, and let $(\mu_N)_{N\geq2}$ be as in Theorem \ref{thm:gradflow}. Then there exist two finite constants $M_0(T) > 0$ and $E_0(T) \in \R$ such that the following bounds hold for all $t \in [0,T]$.
	\be\label{eq:moment-discrete}
		M_2(\mu_N(t)) \leq M_0(T)
	\ee
and
	\be \label{eq:energy-bound-discrete}
		\E(\mu_N(t)) \geq E_0(T).
	\ee
\end{lem}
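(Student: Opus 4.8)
The plan is to combine the energy-dissipation inequality carried by the gradient flow with the Carleman estimate of Lemma~\ref{lem:carleman}, resolving the resulting coupling between energy and moment by a careful choice of the Carleman parameter $\delta$. Since $\mu_N$ is a $2$-curve of maximal slope for $\E$, Remark~\ref{rem:prop-gradflow} gives that $t\mapsto\E(\mu_N(t))$ is non-increasing and absolutely continuous with $(\E\circ\mu_N)'(t)=-|\mu_N'|_{d_2}(t)^2$ for a.e.\ $t$; integrating yields the dissipation identity
\bes
	\E(\mu_N(t)) + \int_0^t |\mu_N'|_{d_2}(s)^2\,\d s = \E(\mu_N(0)) \qquad\text{for all }t\in[0,T].
\ees
By well-preparedness the recovery property gives $\limsup_{N}\E(\mu_N(0))\leq E(\rho_0)<+\infty$, so $\E(\mu_N(0))\leq C_0$ for some $N$-independent $C_0\geq0$; moreover, since $\rho_0\in\G(\O)$ with $\supp\rho_0=[-r,r]$, well-preparedness forces $x_1=-r$ and $x_N=r$, so all particles of $\mu_N(0)$ lie in $[-r,r]$ and $\sup_N M_2(\mu_N(0))\leq r^2=:m_0^2$.

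First I would bound the energy from below by the moment. Using $\E(\mu_N(t))=E(\rho_N(t))$ with $\rho_N(t)\in\Pac(\R)\subset\P_2(\R)$, the estimate \eqref{eq:energy-moment} gives $E(\rho_N(t))\geq -K_\delta - \delta M_2(\rho_N(t))$ for every $\delta>0$. A direct computation on the non-overlapping balls $B_i$ shows $M_2(\rho_N)=M_2(\mu_N)+\tfrac1{12N}\sum_{i=1}^N r_i^2$; since each $r_i$ is bounded by a consecutive gap $x_{i+1}-x_i$ or $x_i-x_{i-1}$ (the end balls being handled by convention \eqref{eq:convention1}), one gets $\tfrac1N\sum_i r_i^2\leq 6\,M_2(\mu_N)$ and hence $M_2(\rho_N)\leq c_1 M_2(\mu_N)$ for an absolute constant $c_1$. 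This produces the lower bound
\bes
	\E(\mu_N(t)) \geq -K_\delta - c_1\delta\, M_2(\mu_N(t)) \qquad\text{for all }\delta>0.
\ees

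Next I would control the moment along the flow. Writing $M_2(\mu)^{1/2}=d_2(\mu,\delta_0)$, the triangle inequality, the absolute continuity of $\mu_N$ and Cauchy--Schwarz give
\bes
	M_2(\mu_N(t))^{1/2} \leq m_0 + \int_0^t |\mu_N'|_{d_2}(s)\,\d s \leq m_0 + \sqrt{T}\Big(\int_0^t |\mu_N'|_{d_2}(s)^2\,\d s\Big)^{1/2}.
\ees
By the dissipation identity the last integral equals $\E(\mu_N(0))-\E(\mu_N(t))$, which by the lower bound above is at most $C_0+K_\delta+c_1\delta M_2(\mu_N(t))$. Setting $m(t):=M_2(\mu_N(t))^{1/2}$ and using $\sqrt{a+b}\leq\sqrt a+\sqrt b$ (valid once $\delta$ is small enough that $C_0+K_\delta>0$, which holds since $K_\delta\to\infty$ as $\delta\to0$), this reads $m(t)\leq m_0+\sqrt{T}(C_0+K_\delta)^{1/2}+\sqrt{Tc_1\delta}\,m(t)$. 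Choosing $\delta$ small enough that $\sqrt{Tc_1\delta}\leq\tfrac12$ lets me absorb the last term and obtain $m(t)\leq 2m_0+2\sqrt{T}(C_0+K_\delta)^{1/2}$, uniformly in $t$ and $N$; this is \eqref{eq:moment-discrete}. Feeding this uniform moment bound back into the lower bound with the same $\delta$ then gives \eqref{eq:energy-bound-discrete} with $E_0(T):=-K_\delta-c_1\delta M_0(T)$.

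The main obstacle is the apparent circularity: the energy lower bound involves the moment, the moment growth is governed by the dissipated energy, and the latter requires a lower bound on the energy. This is broken precisely by the freedom in the Carleman parameter---taking $\delta$ small makes the coefficient $\sqrt{Tc_1\delta}$ of $m(t)$ subcritical, so the self-referential inequality can be absorbed. The only method-specific, genuinely one-dimensional input is the comparison $M_2(\rho_N)\leq c_1 M_2(\mu_N)$, which rests on the non-overlapping-ball structure through the bound on $\sum_i r_i^2$. Finally, when $H$ satisfies \ref{hyp:general1} one has $E\geq0$, hence $\E(\mu_N(t))\geq0$ directly and $\int_0^t|\mu_N'|_{d_2}^2\leq C_0$ with no reference to the moment, so the absorption step is unnecessary; the delicate case is thus the heat equation \eqref{hyp:heat}.
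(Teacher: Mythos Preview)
Your argument is correct and follows the same overall architecture as the paper's: write $\E(\mu_N(t))=E(\rho_N(t))$, apply the Carleman estimate \eqref{eq:energy-moment}, compare $M_2(\rho_N)$ to $M_2(\mu_N)$ via a bound on $\tfrac1N\sum_i r_i^2$, and then close the loop by choosing $\delta$ small so the self-referential moment term can be absorbed. The one substantive difference is how the gradient-flow structure is invoked: the paper uses the evolution variational inequality (from the convexity of $\wtE$) to get $d_2^2(\mu_N(t),\mu_N^0)\leq K_N(t)\,t$ directly, while you use the energy-dissipation identity from Remark~\ref{rem:prop-gradflow} together with Cauchy--Schwarz on $\int_0^t|\mu_N'|_{d_2}$. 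Your route is slightly more elementary in that it only needs the curve-of-maximal-slope property, not the EVI; the paper's route gives a cleaner Hölder-type estimate that it reuses later for equicontinuity in Lemma~\ref{lem:compactness}. Either way the absorption step and the final constants come out the same up to harmless numerical factors (the paper obtains $\tfrac1N\sum_i r_i^2\leq 2M_2(\mu_N)$ by splitting at the index where the particles change sign, which is sharper than your stated constant $6$ but immaterial).
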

\begin{proof}
	Fix $t\in [0,T]$ and consider $\delta>0$ that we choose later. Consider $\rho_N(t)$, the piecewise constant density defined in \eqref{eq:density-on-balls} associated to $\mu_N(t)$. By \eqref{eq:energy-moment},
\be\label{eq:inequality-discrete}
	\E(\mu_N(t)) = E(\rho_N(t)) \geq -K_\delta -\delta M_2(\rho_N(t)).
\ee
Let us compute
\bes
	M_2(\rho_N(t)) = \frac1N \sum_{i=1}^N \frac{1}{r_i(t)} \int_{x_i(t)-\frac{r_i(t)}{2}}^{x_i(t)+\frac{r_i(t)}{2}} x^2 \d x = \frac1N \sum_{i=1}^N \left(x_i(t)^2 + \frac{r_i(t)^2}{12}\right) = M_2(\mu_N(t)) + \frac1N\sum_{i=1}^N\frac{r_i(t)^2}{12},
\ees
where we recall that $r_i(t) = \min(\Delta x_{i}(t), \Delta x_{i+1}(t))$. Let us write $k:= \argmin\{i\in\{1,\dots,N\} \mid x_i(t) \geq 0\}$. Then $r_i(t) \leq \Delta x_i(t) \leq x_i(t)$ for all $i \geq k+1$, $r_i(t) \leq \Delta x_{i+1}(t) \leq -x_i(t)$ for all $i \leq k-2$, $r_k(t) \leq \Delta x_{k+1}(t) \leq x_{k+1}(t)$ and $r_{k-1}(t) \leq \Delta x_{k-1}(t)\leq -x_{k-2}(t)$. Therefore,
\bes
	\sum_{i=1}^N r_i^2 = \sum_{\substack{i=1\\i\neq k-1,k}}^N r_i^2 + r_k^2 + r_{k-1}^2\leq \sum_{\substack{i=1\\i\neq k-1,k}}^N x_i^2 + x_{k+1}^2 + x_{k-2}^2 \leq 2NM_2(\mu_N),
\ees
where we omitted the time dependences. Thus,
\bes
	M_2(\rho_N(t)) \leq M_2(\mu_N(t)) + M_2(\mu_N(t))/6 = 7M_2(\mu_N(t))/6 .
\ees
Then, by \eqref{eq:inequality-discrete},
\be\label{eq:inequality-discrete2}
	\textstyle{\E(\mu_N(t)) \geq -K_\delta - \frac{7\delta}{6}M_2(\mu_N(t)).}
\ee
Now, by the evolution variational inequality given in \cite[Theorem 5.3(iii)]{Ambrosio} and the convexity of $\E$, and since $\E$ is a Lyapunov functional for the discrete gradient flow, we know that, for all $0\leq \tau \leq t \leq T$,
\begin{align*}
	d_2^2(\mu_N(t),\mu_N(\tau)) &\leq 2\int_\tau^t ( \E(\mu_N(\tau)) - \E(\mu_N(s)) ) \d s\\
	&\leq 2\int_\tau^t ( \E(\mu_N^0) - \E(\mu_N(t))) \d s = K_N(t) (t-\tau),
\end{align*}
where $K_N(t) :=2( \E(\mu_N^0) - \E(\mu_N(t)))$. By only swapping $t$ and $\tau$ when $0\leq t < \tau \leq T$, we get
\be \label{eq:holder}
	d_2^2(\mu_N(t),\mu_N(\tau)) \leq K_N(t) |t-\tau| \quad \mbox{for all $t,\tau\in[0,T]$}.
\ee
By Remark \ref{rem:g-finite} and the well-preparedness of $\mu_N^0$ we know that there exists a constant $e_0 = e(\rho_0) \in \R$ such that $\E(\mu_N^0)\leq e_0$, and therefore, by \eqref{eq:inequality-discrete2},
\be\label{eq:constant-ineq}
	\textstyle{K_N(t) \leq 2e_0 + 2K_\delta +\frac{7\delta}{3}M_2(\mu_N(t)).}
\ee
Note that
\begin{align*}
	M_2(\mu_N(t)) = d_2^2(\mu_N(t),\delta_0) &\leq \left(d_2^2(\mu_N(t),\mu_N^0) + d_2(\mu_N^0,\delta_0)\right)^2\\
	&\leq 2d_2^2(\mu_N(t),\mu_N^0) + 2d_2^2(\mu_N^0,\delta_0) = 2d_2^2(\mu_N(t),\mu_N^0) + 2M_2(\mu_N^0).
\end{align*}
By assumption $\rho_0\in\G(\R)$, which, together with the well-preparedness of $\mu_N^0$, implies the existence of $m_0= m(\rho_0)>0$ such that $M_2(\mu_N^0) \leq m_0$. This, along with \eqref{eq:holder} and \eqref{eq:constant-ineq}, implies
\bes
	\textstyle{M_2(\mu_N(t)) \leq 2\left( 2e_0 + 2K_\delta +\frac{7\delta}{3}M_2(\mu_N(t)) \right) t + 2m_0 \leq 4\left( e_0 + K_\delta +\frac{7\delta}{6}M_2(\mu_N(t)) \right) T + 2m_0.}
\ees
Hence, by choosing any $\delta< 3/(14T)$, say $\delta = 3/(28T)$, we get
\bes
	M_2(\mu_N(t)) \leq 8( e_0 + K_{3/(28T)} ) T + 4m_0 =: M_0(T),
\ees
which is \eqref{eq:moment-discrete}. To prove \eqref{eq:energy-bound-discrete}, use \eqref{eq:inequality-discrete2} to get
\bes
	\textstyle{\E(\mu_N(t)) \geq -K_\delta - \frac{7\delta}{6}M_0(T) =: E_0(T),}
\ees
for any choice of $\delta >0$, which ends the proof.
\end{proof}

We can finally get the compactness part of the main theorem and \ref{cond:md}.
\begin{lem}\label{lem:compactness}
	Let $H$ be as in \eqref{hyp:heat} or let it satisfy \ref{hyp:general1}, and let $(\mu_N)_{N\geq2}$ be as in Theorem \ref{thm:gradflow}. Then there exists $\rho\in AC^2([0,T],\P_2(\O))$ such that $\mu_N(t) \wto \rho(t)$ narrowly as $N\to\infty$ for all $t \in [0,T]$. Furthermore, \ref{cond:md} holds.
\end{lem}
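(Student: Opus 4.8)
The plan is to combine the uniform \emph{a priori} bounds from Lemma \ref{lem:bounds-moment-energy} with a refined Arzel\`a--Ascoli theorem for curves in the Wasserstein space, namely \cite[Proposition 3.3.1]{AGS}, whose conclusion delivers \ref{cond:md} essentially for free. First I would assemble the three ingredients that theorem requires. From \eqref{eq:holder}, the bounds $\E(\mu_N^0) \le e_0$ and \eqref{eq:energy-bound-discrete} obtained in the proof of Lemma \ref{lem:bounds-moment-energy}, the curves are uniformly $\tfrac12$-H\"older in $d_2$, since $d_2^2(\mu_N(t),\mu_N(\tau)) \le K_N(t)|t-\tau| \le 2(e_0 - E_0(T))|t-\tau|$ for all $N$ and all $t,\tau \in [0,T]$. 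Next, using that $\mu_N$ is a $2$-curve of maximal slope together with Remark \ref{rem:prop-gradflow}, the metric derivatives are controlled uniformly in $L^2$:
\bes
	\int_0^T |\mu_N'|_{d_2}(s)^2 \d s = \E(\mu_N^0) - \E(\mu_N(T)) \le e_0 - E_0(T).
\ees
Finally, \eqref{eq:moment-discrete} provides the uniform second-moment bound $M_2(\mu_N(t)) \le M_0(T)$.

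The moment bound is exactly what yields compactness pointwise in $t$: by Markov's inequality it makes the family $\{\mu_N(t)\}_N$ tight, hence narrowly relatively compact by Prokhorov's theorem, for each fixed $t \in [0,T]$. I would then apply \cite[Proposition 3.3.1]{AGS}, taking the narrow topology as the auxiliary (weak) topology and $d_2$ as the underlying metric; this is legitimate because $d_2$ is sequentially lower semi-continuous with respect to narrow convergence (see \cite[Chapter 7]{AGS}). The proposition produces a (not relabelled) subsequence along which $\mu_N(t) \wto \rho(t)$ narrowly for every $t \in [0,T]$, with a limit curve $\rho \in AC^2([0,T],\P_2(\O))$; here narrow lower semi-continuity of the second moment gives $M_2(\rho(t)) \le M_0(T) < \infty$, so indeed $\rho(t) \in \P_2(\O)$. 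Crucially, the same proposition also yields the lower semi-continuity of the metric-derivative functional,
\bes
	\int_0^t |\rho'|_{d_2}(s)^2 \d s \le \liminf_{N\to\infty} \int_0^t |\mu_N'|_{d_2}(s)^2 \d s \quad \mbox{for all $t \in [0,T]$},
\ees
which is precisely \ref{cond:md}.

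Two points remain. First, the statement claims convergence of the \emph{full} sequence, whereas the argument above only extracts a subsequence; this gap is closed a posteriori, once \ref{cond:liminf} and \ref{cond:slopes} are established and Theorem \ref{thm:serfaty} identifies every narrow subsequential limit with the unique continuum gradient flow solution emanating from $\rho_0$, so that by uniqueness the whole sequence converges. Second, when $\O = \R$ the convergence is only narrow and cannot in general be upgraded to $d_2$, but the uniform bound $M_2(\mu_N(t)) \le M_0(T)$ renders the family $\{|x|^p\}$ uniformly integrable for every $1 \le p < 2$, so narrow convergence in fact improves to $d_p$-convergence for such $p$, as recorded in Remark \ref{rem:stability-initial}.

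I expect the main obstacle to be precisely this mismatch between topologies in the unbounded case $\O = \R$: the second-moment bound supplies only tightness, that is narrow relative compactness, and \emph{not} $d_2$-relative compactness, since mass could in principle carry second moment off to infinity. The proof must therefore be run with the narrow topology playing the role of the weak topology in \cite[Proposition 3.3.1]{AGS}, while the $AC^2$ structure and \ref{cond:md} are transferred to the limit through the narrow lower semi-continuity of $d_2$ and of the metric derivative. When $\O = [-\ell,\ell]$ the domain is compact, narrow convergence and $d_2$-convergence coincide, and this difficulty disappears entirely.
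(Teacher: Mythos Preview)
Your proposal is correct and follows essentially the same route as the paper: uniform moment bound $\Rightarrow$ tightness via Prokhorov, uniform $\tfrac12$-H\"older estimate and uniform $L^2$ bound on $|\mu_N'|_{d_2}$, then the refined Arzel\`a--Ascoli theorem \cite[Proposition~3.3.1]{AGS} with the narrow topology playing the role of the weak topology~$\sigma$. The only difference is one of packaging: you invoke the full conclusion of \cite[Proposition~3.3.1]{AGS} to obtain $\rho\in AC^2([0,T],\P_2(\O))$ and the liminf inequality \ref{cond:md} in a single stroke, whereas the paper applies that proposition only for the pointwise narrow convergence and then re-derives the $AC^2$ structure and \ref{cond:md} by hand---extracting a weak $L^2$ limit $v$ of $|\mu_N'|_{d_2}$, passing to the limit in $d_2(\mu_N(t_0),\mu_N(t_1))\le\int_{t_0}^{t_1}|\mu_N'|_{d_2}$ via the narrow lower semi-continuity of $d_2$, and concluding by weak lower semi-continuity of the $L^2$-norm (following \cite[Theorem~5.6]{CT}). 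Your version is more economical; the paper's is more self-contained, as it effectively rewrites the relevant half of the proof of \cite[Proposition~3.3.1]{AGS}. Your remarks on the subsequence-versus-full-sequence issue and on the upgrade to $d_p$-convergence for $p<2$ are also correct and match the paper's treatment.
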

\begin{proof}
	We want to use the generalisation of the Arzel\`a--Ascoli theorem given in \cite[Proposition 3.3.1]{AGS}. To this end we first show that the family $\{\mu_N\}_{N\geq2}$ is equi-continuous in time on $[0,T]$ with respect to $d_2$. By Lemma \ref{lem:bounds-moment-energy} and its proof, we already have
\bes
	d_2^2(\mu_N(t),\mu_N(\tau)) \leq 2(e_0 - E_0(T)) |t-\tau| \quad \mbox{for all $t,\tau\in[0,T]$},
\ees
which is the result---more specifically, this shows that $\mu_N$ is $(1/2)$-H\"older continuous, uniformly in $N$. Now, fix $t \in [0,T]$. In order to apply \cite[Proposition 3.3.1]{AGS}, we now only need to show that the family $\{\mu_N(t)\}_{N\geq2}$ is narrowly sequentially compact; by Prohorov's theorem, this means showing that $\{\mu_N(t)\}_{N\geq2}$ is tight, uniformly in $t$, which in turn is implied by $(M_2(\mu_N(t)))_{N\geq2}$ being a sequence bounded uniformly in $N$ and $t$, which is readily given to us by \eqref{eq:moment-discrete}. The Arzel\`a--Ascoli theorem then gives that there exists $\rho \in C([0,T],\P(\O))$, a continuous curve from $[0,T]$ to $\P(\O)$, such that $\mu_N(t) \wto \rho(t)$ narrowly as $N\to\infty$ for all $t\in[0,T]$, up to a subsequence of $(\mu_N(t))_{N\geq2}$; we also have $\rho(t) \in \P_2(\O)$ for all $t\in[0,T]$. 

We now show that $\rho$ is actually in $AC^2([0,T],\P_2(\O))$ and that \ref{cond:md} is true. This part of the proof is based on \cite[Theorem 5.6]{CT}. Fix $t \in [0,T]$. By Remark \ref{rem:prop-gradflow}, 
\bes
	\int_0^t |\mu_N'|_{d_2}(s)^2 \d s = \E(\mu_N^0) - \E(\mu_N(t)) \leq e_0 - E_0(T),
\ees
where $e_0$ and $E_0(T)$ are as above. Then, up to a subsequence, $\lim_{N\to\infty} \int_0^t |\mu_N'|_{d_2}(s)^2 \d s = C$ for some $C\geq0$ independent of $N$. Therefore, $|\mu_N'|_{d_2}$ is bounded in $L^2([0,t])$, and so, up to a further subsequence, it is $L^2$-weakly convergent to some $v \in L^2([0,t])$. It is then also $L^1$-weakly convergent to $v$, so that
\be\label{eq:L1}
	\lim_{N\to\infty} \int_{t_0}^{t_1} |\mu_N'|_{d_2}(s) \d s = \int_{t_0}^{t_1} v(s) \d s \quad \mbox{for all $0\leq t_0\leq t_1 \leq T$}.
\ee
We also know that, by definition of the metric derivative and $\mu_N$ being $2$-absolutely continuous, 
\bes
	d_2(\mu_N(t_0),\mu_N(t_1)) \leq \int_{t_0}^{t_1} |\mu_N'|_{d_2}(s) \d s.
\ees
Then, by the narrow lower semi-continuity of $d_2$, see \cite[Proposition 3.5]{AG}, and \eqref{eq:L1}, 
\bes
	d_2(\rho(t_0),\rho(t_1)) \leq \int_{t_0}^{t_1} v(s) \d s.
\ees
Therefore $\rho \in AC^2([0,T],\P_2(\O))$ and, by the remark below Definition \ref{defn:absolutely-continuous}, $|\rho'|_{d_2}(s) \leq v(s)$ for almost every $s \in [0,T]$. By the weak lower semi-continuity of the $L^2$-norm, this gives
\bes
	\liminf_{N\to\infty} \int_0^t |\mu_N'|_{d_2}(s)^2 \d s = \lim_{N\to\infty} \int_0^t |\mu_N'|_{d_2}(s)^2 \d s \geq \int_0^t v(s)^2 \d s \geq \int_0^t |\rho'|_{d_2}(s)^2 \d s,
\ees
which is \ref{cond:md}.
\end{proof}

\section{Condition on the energy and $\Gamma$-convergence of the discrete energy}
\label{sec:gamma-convergence}

In this section we prove that \ref{cond:liminf} holds. We also prove that the discrete energy given in \eqref{eq:energy-discrete} actually $\Gamma$-converges with respect to $d_2$, in dimension one, to the continuum energy functional \eqref{eq:energy} as the number of particles $N$ grows to infinity; this justifies the existence of a well-prepared sequence for $\rho_0\in\G(\O)$ assumed in Theorem \ref{thm:gradflow}.

\subsection{Condition on the energy}\label{subsec:energy}
We directly give the proof of \ref{cond:liminf}. Note that the proof of Lemma \ref{lem:narrow-rhoN} says that the piecewise constant density $\rho_N$, defined in \eqref{eq:density-on-balls}, is a good narrow approximation of the limiting $\rho$ of Theorem \ref{thm:gradflow}.
\begin{lem}\label{lem:narrow-rhoN}
	Let $H$ be as in \eqref{hyp:heat} or let it satisfy \ref{hyp:general1}, and let $(\mu_N)_{N\geq2}$ and $\rho$ be as in Theorem \ref{thm:gradflow}. Then \ref{cond:liminf} holds.
\end{lem}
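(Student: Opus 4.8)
The plan is to reduce \ref{cond:liminf} to the narrow lower semi-continuity of the continuum energy $E$, which holds since $E$ is displacement convex and narrowly lower semi-continuous (see the discussion following Definition \ref{defn:continuum-gradient-flow}). Recall from \eqref{eq:energy-discrete-particles} and the reconstruction \eqref{eq:density-on-balls} that $\E(\mu_N(t)) = E(\rho_N(t))$, where $\rho_N(t)$ is the piecewise constant density built from the particles $\bxN(t)$. Hence, the moment the narrow convergence $\rho_N(t) \wto \rho(t)$ is established, lower semi-continuity of $E$ gives
\[
	E(\rho(t)) \leq \liminf_{N\to\infty} E(\rho_N(t)) = \liminf_{N\to\infty} \E(\mu_N(t)),
\]
which is exactly \ref{cond:liminf}. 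So the whole proof comes down to showing $\rho_N(t) \wto \rho(t)$ narrowly for each fixed $t$.

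By Lemma \ref{lem:compactness} we already know $\mu_N(t) \wto \rho(t)$ narrowly, so it is enough to show that $\rho_N(t)$ and $\mu_N(t)$ share the same narrow limit. I would do this by testing against bounded Lipschitz functions, which metrise narrow convergence on probability measures. For $\varphi$ bounded with Lipschitz constant $L$, using that each ball $B_i$ carries mass $1/N$, is centred at $x_i$ and has radius $r_i/2$,
\[
	\left| \int_\R \varphi \, \d\rho_N(t) - \int_\R \varphi \, \d\mu_N(t)\right| \leq \frac1N \sum_{i=1}^N \frac{1}{|B_i|}\int_{B_i} |\varphi(x) - \varphi(x_i)| \, \d x \leq \frac{L}{2N}\sum_{i=1}^N r_i,
\]
so everything reduces to the estimate $\tfrac1N \sum_{i=1}^N r_i \to 0$. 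Since $r_i = \min(\Delta x_i,\Delta x_{i+1}) \le \Delta x_{i+1}$ for $i \leq N-1$ and $r_N = \Delta x_N$, the sum telescopes and $\sum_{i=1}^N r_i \leq 2(x_N - x_1)$. The moment bound \eqref{eq:moment-discrete} gives $x_i^2 \leq N M_2(\mu_N(t)) \leq N M_0(T)$ for every $i$, whence $x_N - x_1 \leq 2\sqrt{N M_0(T)}$ and $\tfrac1N \sum_i r_i \leq 4\sqrt{M_0(T)/N} \to 0$. This proves $\rho_N(t) \wto \rho(t)$ and finishes the argument. (When $\O=[-\ell,\ell]$ the estimate is immediate since $x_N - x_1 \le 2\ell$.)

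The one genuinely delicate point is this last estimate $\tfrac1N\sum_i r_i \to 0$ in the case $\O = \R$: there the inter-particle distances need not be uniformly of order $1/N$, and the argument succeeds only because the summed $r_i$ grow no faster than the support diameter, which the a priori second-moment bound \eqref{eq:moment-discrete} controls by $O(\sqrt N)$. Everything else---the reduction to lower semi-continuity and the bounded-Lipschitz comparison between $\rho_N$ and $\mu_N$---is routine. This argument also confirms the remark preceding the statement, namely that $\rho_N(t)$ is a good narrow approximation of $\rho(t)$; in the bounded case the same computation in fact yields $d_2$-convergence of $\rho_N(t)$ to $\rho(t)$.
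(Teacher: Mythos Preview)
Your proof is correct and follows essentially the same route as the paper: show $\rho_N(t)-\mu_N(t)\wto 0$ narrowly by testing against bounded Lipschitz functions, bound the discrepancy by $\tfrac{L}{N}\sum_i r_i$, control $\sum_i r_i$ via $x_N-x_1$ and the second-moment bound \eqref{eq:moment-discrete}, and then conclude by $\E(\mu_N)=E(\rho_N)$ and narrow lower semi-continuity of $E$. The only differences are cosmetic (your constant $L/2$ versus the paper's $L$, and your bound $\sum_i r_i\le 2(x_N-x_1)$ versus the paper's slightly different bookkeeping of the $i=1$ term).
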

\begin{proof}
	Let us omit the time dependences. Write $\phi_N:= \left| \int_{\R} \varphi(x) \rho_N(x) \d x - \int_{\R} \varphi(x) \d\mu_N(x) \right|$ for some $\varphi \in C_\mt{b}(\R)$ Lipschitz with constant $L>0$. Notice that we integrate over $\R$ even if $\O=[-\ell,\ell]$ since $\rho_N \in \Pac(\R)$. Compute
\begin{align*}
	\phi_N &= \left| \sum_{i=1}^N \frac1N \int_{B_i} \frac{\varphi(x)}{|B_i|} \d x - \frac1N \sum_{i=1}^N \varphi(x_i) \right| \leq \frac1N \sum_{i=1}^N \frac{1}{|B_i|} \int_{B_i} | \varphi(x) - \varphi(x_i)| \d x\\
	& \leq \frac LN \sum_{i=1}^N \frac{1}{|B_i|} \int_{B_i} | x - x_i| \d x \leq \frac LN \sum_{i=1}^N |B_i| \leq \frac LN \sum_{i=2}^N \Delta x_i + \frac{L\Delta x_2}{N}\\
	& \leq \frac{L}{N}\left(\sqrt{2(x_1^2 + x_N^2)} + \sqrt{2(x_1^2 + x_2^2)}\right) \leq 2L\sqrt{\frac{2M_2(\mu_N)}{N}} \leq 2L\sqrt{\frac{2M_0(T)}{N}} \xrightarrow[N\to\infty]{}0,
\end{align*}
by \eqref{eq:moment-discrete}, which shows that $\rho_N-\mu_N\wto0$ narrowly as $N\to\infty$. Then, since $\mu_N \wto \rho$, we get that $\rho_N \to \rho$ as $N\to\infty$. Now, by definition, $\E(\mu_N) = E(\rho_N)$, and $E$ is narrowly lower semi-continuous, which gives \ref{cond:liminf}.
\end{proof}

\subsection{$\Gamma$-convergence of the discrete energy}\label{subsec:gamma}
We show that the discrete energy $\Gamma$-converges to the continuum one with respect to the metric $d_2$, see Definition \ref{defn:gamma-convergence}. We do not show it with respect to the narrow convergence if $\O = \R$ (for which $d_2$- and narrow convergences are not equivalent); indeed, this case is more involved since the ``liminf" condition may not hold for sequences which do not have a control on the second moments, see the proof of Lemma \ref{lem:narrow-rhoN}.

\begin{defn}[$\Gamma$-convergence] \label{defn:gamma-convergence}
	We say that the discrete energy $(\E)_{N\geq2}$ \emph{$\Gamma$-converges} (with respect to $d_2$) to the continuum energy $E$ if the following two conditions are met for all $\rho \in \P_2(\O)$.
\begin{enumerate}[label=(\roman*)]
	\item \label{it:liminf} (``liminf" condition) All sequences $(\mu_N)_{N\geq2}$ with $\mu_N\in\A_{N,w}(\O)$ for all $N\geq2$ such that $d_2(\mu_N,\rho) \to 0$ as $N \to \infty$ satisfy $E(\rho) \leq \liminf_{N \to \infty} \E(\mu_N)$.
	\item \label{it:limsup} (``limsup" condition) There exists a recovery sequence with respect to $d_2$ for $\rho$.
\end{enumerate}
\end{defn}

On top of \ref{hyp:general1}, in this subsection we sometimes assume that $H$ satisfies the following: there exist continuous functions $f_1,f_2 \: [0,\infty) \to \R$ such that $f_1(1) = 1$ and $f_2(1) = 0$, and
\begin{equation} \label{eq:upper-bound}
	H(\alpha x) \leq f_1(\alpha)H(x) + f_2(\alpha)x \quad \mbox{for all $x,\alpha \in [0,\infty)$}.
\end{equation}
This is still satisfied by typical densities of internal energy, such as for the heat equation and the porous medium equation. This assumption is actually only needed in the proof of Lemma \ref{lem:mollifier}.

\begin{thm}\label{thm:gamma-convergence}
	Let $H$ satisfy \ref{hyp:general1} and \eqref{eq:upper-bound}, or let it be as in \eqref{hyp:heat}. Then $(\E)_{N\geq 2}$ $\Gamma$-converges to $E$.
\end{thm}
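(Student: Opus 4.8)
The plan is to verify the two conditions of Definition \ref{defn:gamma-convergence} separately, treating the ``liminf'' condition \ref{it:liminf} and the ``limsup'' condition \ref{it:limsup} as essentially independent tasks. For the ``liminf'' part, I would proceed exactly as in the proof of Lemma \ref{lem:narrow-rhoN}: given a sequence $(\mu_N)_{N\geq2}$ with $d_2(\mu_N,\rho)\to0$, the control on the second moments furnished by $d_2$-convergence lets me repeat the estimate on $\phi_N$ from that lemma to show that $\rho_N-\mu_N\wto0$ narrowly, hence $\rho_N\to\rho$; then $\E(\mu_N)=E(\rho_N)$ together with the narrow lower semi-continuity of $E$ yields $E(\rho)\leq\liminf_{N\to\infty}\E(\mu_N)$. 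The only subtlety relative to Lemma \ref{lem:narrow-rhoN} is that here the sequence $(\mu_N)$ is arbitrary rather than a gradient flow, so I cannot invoke \eqref{eq:moment-discrete}; instead I would bound the second moments directly using $d_2$-convergence, namely $M_2(\mu_N)=d_2^2(\mu_N,\delta_0)$ stays bounded since $d_2(\mu_N,\rho)\to0$ and $\rho\in\P_2(\O)$.

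For the ``limsup'' condition \ref{it:limsup}, the task is to construct, for each $\rho\in\P_2(\O)$, a sequence $(\mu_N)$ of atomic measures with $d_2(\mu_N,\rho)\to0$ and $\limsup_{N\to\infty}\E(\mu_N)\leq E(\rho)$. The natural construction is to place the particles at the quantiles of $\rho$, i.e.\ choose $x_i$ so that $\rho$ assigns mass $1/N$ to each inter-particle interval; this is the one-dimensional quantile (or equal-mass) discretisation, under which the inter-particle distances $\Delta x_i$ are directly tied to the local density of $\rho$ and $d_2(\mu_N,\rho)\to0$ follows from standard quantile-approximation estimates. One would then need to show $\limsup_N\frac1N\sum_i h(Nr_i)\leq\int H(\rho)$. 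I expect this to work cleanly when $\rho$ is regular — for instance on $\G(\O)$, where Lemma \ref{lem:limsup-continuous} already provides a well-prepared sequence achieving the energy bound — and the main work is extending it to a general $\rho\in\P_2(\O)$.

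The hard part will be the ``limsup'' construction for a \emph{general} $\rho\in\P_2(\O)$, since such $\rho$ need not be absolutely continuous, may have unbounded or disconnected support, and may have $E(\rho)=+\infty$. The standard remedy, and where I expect the assumption \eqref{eq:upper-bound} to enter, is a diagonal argument: I would first approximate $\rho$ by regular densities $\rho^\e\in\G(\O)$ (or at least by smooth, compactly supported, bounded-below densities) in such a way that $d_2(\rho^\e,\rho)\to0$ and $E(\rho^\e)\to E(\rho)$, then invoke the already-established ``limsup'' construction for each $\rho^\e$, and finally extract a diagonal sequence using the lower semi-continuity of $\Gamma$-$\liminf$ (equivalently, the fact that a countable family of recovery sequences can be diagonalised against a metric). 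The growth and doubling conditions in \ref{hyp:general1}, together with the scaling inequality \eqref{eq:upper-bound}, are precisely what control the behaviour of $H(\rho^\e)$ under mollification and truncation and guarantee $E(\rho^\e)\to E(\rho)$; this mollification estimate is flagged in the text as the content of Lemma \ref{lem:mollifier}, which is where \eqref{eq:upper-bound} is used. The case $E(\rho)=+\infty$ is handled trivially since any recovery sequence automatically satisfies $\limsup_N\E(\mu_N)\leq+\infty$, so the real content is the finite-energy regularisation.

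Throughout, I would keep the heat-equation case \eqref{hyp:heat} in parallel: there $H$ is not non-negative, but the second-moment bounds from $d_2$-convergence combined with the Carleman-type estimate \eqref{eq:energy-l1} of Lemma \ref{lem:carleman} control $\norm{H\circ\rho_N}_{L^1}$ and keep the energy well-behaved, so the same two-step (``liminf'' via $\rho_N\to\rho$, ``limsup'' via quantile discretisation plus diagonal regularisation) strategy applies with only the integrability bookkeeping changed.
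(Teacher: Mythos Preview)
Your proposal is correct and follows essentially the same approach as the paper: the ``liminf'' condition is handled via Lemma \ref{lem:narrow-rhoN} with the second-moment bound coming directly from $d_2$-convergence, and the ``limsup'' condition is obtained by first building a recovery sequence for $\rho\in\G(\O)$ via quantile discretisation (Lemma \ref{lem:limsup-continuous}), then extending to general $\rho\in\Pac(\O)$ by mollification (Lemma \ref{lem:mollifier}, where \eqref{eq:upper-bound} enters) and a diagonal argument. Your identification of the role of each hypothesis and of the trivial $E(\rho)=+\infty$ case matches the paper's treatment exactly.
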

Showing the ``liminf" condition follows the same strategy used to prove \ref{cond:liminf} in Lemma \ref{lem:narrow-rhoN}. The difference lies in the fact that convergence in $d_2$ to an element of $\P_2(\O)$ yields by itself a uniform bound on the second moments of the sequence considered, so that \eqref{eq:moment-discrete} is readily given. We therefore only need to prove the ``limsup" condition.

To this end we only need, for any $\rho \in \P_2(\O)$, to find a recovery sequence with respect to $d_2$. Suppose $E(\rho) < +\infty$, or the result is trivial. Then, by definition of the continuum energy, $\rho \in \Pac(\O)$. We proceed in two main stages: we first prove the result for any $\rho \in \G(\O)$, and then relax this assumption on $\rho$ and prove the general result for any $\rho \in \Pac(\O)$ by a density argument. 

In order to be able to apply Theorem \ref{thm:serfaty} we actually do not need to show the $\Gamma$-convergence on the whole set $\P_2(\O)$, but rather only on the smooth set $\G(\O)$. Indeed, we only need to find a recovery sequence (which is also well-prepared) for the initial profile $\rho_0 \in \G(\O)$, which we do in Section \ref{subsec:case-g}. Note that in this case the hypothesis \eqref{eq:upper-bound} is not needed, which is why it is not assumed in the main theorem.

\subsubsection{Smooth case}
\label{subsec:case-g}

Let us introduce the notion of pseudo-inverse.

\begin{defn}[Pseudo-inverse] \label{defn:generalised-inverse}
	Let $F\: \O \to [0,1]$ be a non-decreasing and right-continuous function. The \emph{pseudo-inverse} of $F$ is the non-decreasing and right-continuous function defined by $\Phi \: [0,1] \to \O \cup \{-\infty,+\infty\}$ and
\bes
	\begin{cases} \Phi(\eta) = \inf \{x \in \O\mid F(x) > \eta\} & \mbox{for all $\eta \in [0,1)$},\\ \Phi(1) = \lim_{\eta \to 1^-} \Phi(\eta). \end{cases}
\ees
\end{defn}

	If $F$ is the cumulative distribution function of a probability density $\rho$, then $\Phi\in L^2([0,1])$ if and only if $\rho \in \P_2(\O)$. If $\rho \in \G(\O)$, then $\Phi \in C^2([0,1])$ is increasing and is the classical inverse of $F$. 

A recovery sequence for any $\rho\in\G(\O)$ is given in the following lemma.

\begin{lem}\label{lem:limsup-continuous}
	Let $H$ satisfy \ref{hyp:general1} or let it be as in \eqref{hyp:heat}. Let $\rho \in \G(\O)$, $F\: \O \to [0,1]$ be its cumulative distribution function, and $\Phi$ be the pseudo-inverse of $F$. Then the sequence $(\mu_N)_{N\geq2}$ with $\mu_N\in\A_{N,w}(\O)$ for all $N\geq2$ and particles
\be\label{eq:initial-set}
	\begin{cases} x_1 = \Phi(0),\\ x_i = \Phi\left(\frac iN \right) & \mbox{for $i \in \{2,\dots,N\}$} \end{cases}
\ee
is well-prepared for $\rho$ according to Definition \ref{defn:initial-set}. 
\end{lem}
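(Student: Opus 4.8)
The plan is to verify each clause of Definition \ref{defn:initial-set} in turn, exploiting that $\rho\in\G(\O)$ makes $\Phi$ the genuine, strictly increasing $C^1$ inverse of $F$, with $\Phi'=1/(\rho\circ\Phi)$ bounded between the two positive constants $m:=1/\max_{[-r,r]}\rho$ and $M:=1/\min_{[-r,r]}\rho$. Since $\supp\rho=[-r,r]$ with $\rho>0$ there, one has $\Phi(0)=-r$ and $\Phi(1)=r$; as $\Phi$ is strictly increasing, the particles in \eqref{eq:initial-set} are distinct, sorted and lie in $\O$, and $x_1=-r$, $x_N=\Phi(1)=r$, which immediately gives the endpoint requirement $x_N=-x_1=r$.

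For the inter-particle bounds I would apply the mean value theorem: for $i\in\{3,\dots,N\}$ one gets $\Delta x_i=\Phi(i/N)-\Phi((i-1)/N)=\Phi'(\theta_i)/N$ for some $\theta_i\in((i-1)/N,i/N)$, while $\Delta x_2=\Phi(2/N)-\Phi(0)=2\Phi'(\theta_2)/N$. Using $m\le\Phi'\le M$ this yields $a_1/N\le\Delta x_i\le a_2/N$ for all $i\in\{2,\dots,N\}$ with $a_1:=m$ and $a_2:=2M$. In particular, recalling $r_1=\Delta x_2$ and $r_N=\Delta x_N$ from \eqref{eq:convention1}--\eqref{eq:convention2}, every $Nr_i=N\min(\Delta x_i,\Delta x_{i+1})$ lies in the fixed compact interval $[a_1,a_2]\subset(0,\infty)$.

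To obtain narrow convergence I would instead prove the stronger $d_2(\mu_N,\rho)\to0$ via the one-dimensional identity $d_2(\mu_N,\rho)^2=\int_0^1|\Phi_{\mu_N}-\Phi|^2\,\d\eta$, where the pseudo-inverse $\Phi_{\mu_N}$ equals $x_i$ on $[(i-1)/N,i/N)$. On each such subinterval $|\Phi_{\mu_N}(\eta)-\Phi(\eta)|=|\Phi(i/N)-\Phi(\eta)|\le M/N$ (and $|\Phi(0)-\Phi(\eta)|\le M/N$ on the first one), so $d_2(\mu_N,\rho)\le M/N\to0$, and $d_2$-convergence forces narrow convergence.

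The substantive point is the energy estimate $\limsup_N\E(\mu_N)\le E(\rho)$, for which I would in fact prove convergence. Writing $\E(\mu_N)=\frac1N\sum_{i=1}^N h(Nr_i)$, the key is to replace $r_i$ by $\Delta x_i$: since $|Nr_i-N\Delta x_i|\le|N\Delta x_i-N\Delta x_{i+1}|=|\Phi'(\theta_i)-\Phi'(\theta_{i+1})|\le\omega(2/N)$, where $\omega$ denotes a modulus of continuity of $\Phi'$ on $[0,1]$, and $h$ is uniformly continuous on $[a_1,a_2]$, the quantity $\frac1N\sum_i|h(Nr_i)-h(N\Delta x_i)|$ tends to $0$. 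Then $\frac1N\sum_{i\ge3}h(N\Delta x_i)=\frac1N\sum_{i\ge3}h(\Phi'(\theta_i))$ is a tagged Riemann sum for the continuous integrand $h\circ\Phi'$, converging to $\int_0^1 h(\Phi'(\eta))\,\d\eta$, while the finitely many exceptional terms ($i=1,2$) contribute $O(1/N)$ because $h$ is bounded on $[a_1,a_2]$. Finally the change of variables $x=\Phi(\eta)$, $\d x=\Phi'(\eta)\,\d\eta$ together with $1/\Phi'(\eta)=\rho(\Phi(\eta))$ gives $\int_0^1 h(\Phi'(\eta))\,\d\eta=\int_{-r}^r H(\rho(x))\,\d x=E(\rho)$, so $\E(\mu_N)\to E(\rho)$. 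The main obstacle is precisely this handling of the $\min$ defining $r_i$ and of the irregular first spacing, both dispatched by the uniform continuity of $\Phi'$ and $h$ on compact sets guaranteed by $\rho\in\G(\O)$.
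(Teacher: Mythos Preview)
Your proof is correct. The verification of the inter-particle bounds, the endpoint condition $x_N=-x_1=r$, and the $d_2$-convergence all follow the same line as the paper (your $d_2$-bound is in fact slightly sharper). The genuine difference is in the energy step. The paper works in $x$-space: it writes $E(\rho)=\sum_i\int_{x_{i-1}}^{x_i}H(\rho)$, uses a second-order Taylor expansion of $\Phi$ to identify $\Delta x_i$ with $1/(N\rho(x_i))$ up to $O(1/N^2)$, and then compares $r_i$ to $\Delta x_i$ through the ratio $\Delta x_{i+1}/\Delta x_i$, obtaining $\E(\mu_N)\le E(\rho)+K/N$. You instead stay in $\eta$-space: bound $|Nr_i-N\Delta x_i|$ by the modulus of continuity of $\Phi'$, recognise $\frac1N\sum_{i\ge3}h(N\Delta x_i)=\frac1N\sum_{i\ge3}h(\Phi'(\theta_i))$ as a tagged Riemann sum for $\int_0^1 h(\Phi')\,\d\eta$, and recover $E(\rho)$ by the change of variables $x=\Phi(\eta)$. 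Your route is more elementary---it uses only $\Phi\in C^1$ rather than $C^2$, and it yields $\E(\mu_N)\to E(\rho)$ directly without splitting into $\limsup$ and $\liminf$. One small point of presentation: the identity $N\Delta x_i-N\Delta x_{i+1}=\Phi'(\theta_i)-\Phi'(\theta_{i+1})$ holds only for $3\le i\le N-1$ (for $i=N$ you have $r_N=\Delta x_N$ so nothing to prove), so it would be cleaner to state the replacement step for that range explicitly before invoking the $O(1/N)$ treatment of $i=1,2$.
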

\begin{proof}
	We first show the bound condition on the inter-particle distances. Notice that
\bes
	\Delta x_2 = \Phi\left(\frac2N\right) - \Phi(0), \quad \Delta x_i = \Phi\left(\frac iN\right) - \Phi\left(\frac{i-1}{N}\right) \quad \mbox{for $i\in\{3,\dots,N\}$.}
\ees
Since $\Phi\in C^2([0,1])$, the mean-value theorem yields
\bes
	\Delta x_2 = (\Phi'(\xi_1) + \Phi'(\xi_2))/N = (N\rho(\Phi(\xi_1)))^{-1} + (N\rho(\Phi(\xi_2)))^{-1}, \quad \Delta x_i = \Phi'(\xi_i)/N = (N\rho(\Phi(\xi_i)))^{-1}
\ees
for $i\in\{3,\dots,N\}$ for some $\xi_i \in ((i-1)/N,i/N)$. Therefore,
\be\label{eq:bounds-distances}
	\textstyle (N\max\rho)^{-1} \leq \Delta x_i \leq 2(N\min_{\supp\rho}\rho)^{-1} \quad \mbox{for all $i\in\{2,\dots,N\}$}.
\ee
Also, one sees that $x_N = -x_1 = r$, where $r$ is as in Definition \ref{defn:smooth}, as required by Definition \ref{defn:initial-set}.

We now show that $(\mu_N)_{N\geq2}$ is a recovery sequence with respect to $d_2$ for $\rho$. First, let us show that $d_2(\mu_N,\rho) \to 0$ as $N \to \infty$. We know that the quadratic Wasserstein distance can be written in one dimension as
\bes
	d_2^2(\mu_N,\rho) = \int_0^1 \left(\Gamma_N(\eta) - \Phi(\eta)\right)^2 \d \eta,
\ees
where $\Gamma_N$ is the pseudo-inverse of the cumulative distribution function of $\mu_N$. Also,
\bes
	\Gamma_N(\eta) = \begin{cases} x_1 = \Phi(0) & \mbox{if $\eta\in\left[0,\frac 1N\right)$},\\ x_i = \Phi\left(\frac iN\right) & \mbox{for all $i \in \{2,\dots,N\}$, if $\eta\in\left[\frac{i-1}{N},\frac iN\right)$}.\end{cases}
\ees
Hence, writing $\Delta_i \Phi := \Phi(i/N) - \Phi((i-1)/N)$ for all $i\in\{1,\dots,N\}$,
\begin{align*}
	d_2^2(\mu_N,\rho) &= \sum_{i=2}^N \int_{\frac{i-1}{N}}^{\frac{i}{N}}  \left(\Phi\left(\frac iN \right) - \Phi(\eta)\right)^2 \d \eta + \int_0^{\frac{1}{N}}  \left(\Phi(0) - \Phi(\eta)\right)^2 \d \eta\\
	&\leq \sum_{i=1}^N \int_{\frac{i-1}{N}}^{\frac{i}{N}}  \left(\Delta_i \Phi\right)^2 \d \eta = \dfrac 1N \sum_{i=1}^N \left(\Delta_i \Phi\right)^2
\end{align*}
since $\Phi$ is increasing. Notice that $\Delta x_2 = \Delta_1\Phi + \Delta_2\Phi$ and $\Delta x_i = \Delta_i\Phi$ for all $i\in\{3,\dots,N\}$, and so
\bes
	\sum_{i=1}^N \left(\Delta_i \Phi\right)^2 = \sum_{i=2}^N \Delta x_i^2 - 2\Delta_1\Phi \Delta_2\Phi \leq \sum_{i=2}^N \Delta x_i^2.
\ees
By \eqref{eq:bounds-distances}, we know that $\Delta x_i \leq 2/(N\min_{\supp\rho}\rho)$, which then gives
\bes
	d_2^2(\mu_N,\rho) \leq \frac{1}{N} \sum_{i=2}^N \Delta x_i^2 \leq \textstyle 4(N\min_{\supp\rho}\rho)^{-2} \xrightarrow[N\to\infty]{}0.
\ees
Let us now prove that $\E(\mu_N) \to E(\rho)$ as $N \to \infty$. In the rest of the proof, $K\in\R$ denotes a generic constant which only depends on $\rho$ and $H$ and which may take different values throughout computations. Since $x_1$ and $x_N$ are at the boundaries of the support of $\rho$, compute
\begin{align*}
	E(\rho) &= \sum_{i=2}^N \int_{x_{i-1}}^{x_i} H(\rho(x)) \d x = \sum_{i=2}^N \Delta x_i H(\rho(x_i)) + \sum_{i=2}^N \int_{x_{i-1}}^{x_i} \rho'(\xi_i(x)) H'(\rho(\xi_i(x))) (x-x_i)\d x\\
	&\geq \sum_{i=2}^N \Delta x_i H(\rho(x_i)) + K \sum_{i=2}^N \Delta x_i^2 \geq \sum_{i=2}^N \Delta x_i H(\rho(x_i)) + \frac{K}{N \min_{\supp\rho}\rho^2},
\end{align*}
where $\xi_i\: [x_{i-1},x_i] \to (x_{i-1},x_i)$ for all $i\in\{2,\dots,N\}$ are continuous and bounded functions coming from the mean-value theorem, and $K<0$ is indeed $i$- and $N$-independent since $H\in C^\infty((0,\infty))$ and $\rho\in\G(\O)$ so that $\rho'$ is bounded and $\rho$ is bounded away from 0. Using a second order Taylor expansion on $\Phi$, and again the boundedness properties of $\rho$ and $\rho'$,
\bes
	\Delta x_i = (N\rho(x_i))^{-1} + KN^{-2} \quad \mbox{for all $i\in\{3,\dots,N\}$}.
\ees
Then one has
\bes
	\frac{\Delta x_{i+1}}{\Delta x_i} = \frac{\rho(x_i)}{\rho(x_{i+1})} + \frac{K}{N} \quad \mbox{for all $i\in\{3,\dots,N\}$}.
\ees
Therefore, $r_i := \Delta x_i \min(1,\Delta x_{i+1}/\Delta x_i) \sim \Delta x_i + K\Delta x_i/N$ as $N\to\infty$, uniformly in $i$. Thus,
\begin{align*}
	\sum_{i=2}^N \Delta x_i H(\rho(x_i)) = \Delta x_2H(\rho(x_2)) + \sum_{i=3}^N r_i H\left(\frac{1}{Nr_i}\right) + \frac{K}{N} \geq \sum_{i=3}^N r_i H\left(\frac{1}{Nr_i}\right) + \frac{K}{N},
\end{align*}
where in the last inequality the term $\Delta x_2 H(\rho(x_2))$ is absorbed in the term $K/N$. All in all, we get
\be\label{eq:sup-continuum}
	E(\rho) \geq \sum_{i=3}^N r_i H\left(\frac{1}{Nr_i}\right) + \frac{K}{N} = \frac{1}{N} \sum_{i=3}^N h(Nr_i) + \frac{K}{N}.
\ee
Then, by \eqref{eq:energy-discrete} and \eqref{eq:sup-continuum},
\bes
	\E(\mu_N) \leq \frac1N\sum_{i=3}^N h(Nr_i) + \frac{K}{N} \leq E(\rho) + \frac{K}{N},
\ees
which leads to the result by taking $\limsup$ as $N\to\infty$.
\end{proof}

\subsubsection{General case}
We now relax our assumptions on $\rho$ and consider $\rho \in \Pac(\O)$.

\begin{lem} \label{lem:mollifier}
	Let $H$ be as assumed in Theorem \ref{thm:gamma-convergence}. Let $\rho \in \Pac(\O)$, and define $G_\delta(x) := (1/\delta \sqrt{2\pi}) \exp(-x^2/(2\delta^2))$ for all $x\in\R$ and $\delta>0$. Write
\bes
	\rho_\delta := \frac{\chi_{S(\O) \cap B_{1/\delta}(0)}G_\delta \ast \rho}{\|G_\delta \ast \rho\|_{L^1(S(\O) \cap B_{1/\delta}(0))}} \quad \mbox{with} \quad S(\O) = \begin{cases} \supp \rho & \mbox{if $\O = \R$},\\ [-\ell,\ell] & \mbox{if $\O = [-\ell,\ell]$}. \end{cases}
\ees
Then $\rho_\delta \in \G(\O)$ for all $\delta > 0$, $d_2(\rho_\delta,\rho) \to 0$ as $\delta \to 0$, and $\limsup_{\delta \to 0} E(\rho_\delta) \leq E(\rho)$.
\end{lem}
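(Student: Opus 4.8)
The plan is to establish the three claims in turn, the energy bound being the crux. Throughout I write $Z_\delta := \norm{G_\delta \ast \rho}_{L^1(S(\O)\cap B_{1/\delta}(0))}$ for the normalising constant, so that $\rho_\delta = Z_\delta^{-1}\chi_{S(\O)\cap B_{1/\delta}(0)}(G_\delta\ast\rho)$. First, for membership in $\G(\O)$: since $G_\delta>0$ everywhere and $\rho$ is a probability density, $G_\delta\ast\rho$ is $C^\infty(\R)$ and strictly positive on all of $\R$; hence on the compact interval $S(\O)\cap B_{1/\delta}(0)$ it is smooth and bounded below away from $0$, and the same holds for $\rho_\delta$ after dividing by $Z_\delta>0$. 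This gives conditions (ii) and (iii) of Definition \ref{defn:smooth}, while $\supp\rho_\delta=\overline{S(\O)\cap B_{1/\delta}(0)}$ is a symmetric compact interval $[-r,r]$, giving (i); for $\O=[-\ell,\ell]$ and $\delta<1/\ell$ this interval is exactly $[-\ell,\ell]$, so $r=\ell$ and (iv) holds. Thus $\rho_\delta\in\G(\O)$ for all small $\delta>0$.

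For convergence in $d_2$, I would prove narrow convergence together with convergence of second moments and then invoke the standard characterisation that these two facts are equivalent to $d_2(\rho_\delta,\rho)\to0$ in $(\P_2(\O),d_2)$, see \cite{AGS}. Narrow convergence follows from $G_\delta\ast\rho\to\rho$ in $L^1(\R)$ as $\delta\to0$, the fact that the truncation to $S(\O)\cap B_{1/\delta}(0)$ discards only a vanishing amount of mass, and $Z_\delta\to1$. For the second moments the key identity is $M_2(G_\delta\ast\rho)=M_2(\rho)+\delta^2$; it then remains to show that the mass and second moment carried outside $S(\O)\cap B_{1/\delta}(0)$ vanish. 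Writing the tail second moment as $\int_\R\rho(y)\big(\int_{|x|>1/\delta}x^2G_\delta(x-y)\d x\big)\d y$ and substituting $x=y+\delta z$, the inner integral tends to $0$ pointwise in $y$ and is dominated by $y^2+\delta^2$, which is $\rho$-integrable since $\rho\in\P_2(\R)$; dominated convergence then yields $M_2(\rho_\delta)\to M_2(\rho)$.

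The starting point for the energy bound is Jensen's inequality for the convex function $H$: since $G_\delta(x-\cdot)$ is a probability density, $H((G_\delta\ast\rho)(x))\leq(G_\delta\ast H(\rho))(x)$ for each $x$, whence, integrating over $\R$ and using Fubini, $\int_\R H(G_\delta\ast\rho)\d x\leq E(\rho)$. To pass from $G_\delta\ast\rho$ to the truncated and renormalised $\rho_\delta$, I would invoke the growth bound \eqref{eq:upper-bound} with $\alpha=Z_\delta^{-1}$, giving
\bes
	E(\rho_\delta)\leq f_1(Z_\delta^{-1})\int_{S(\O)\cap B_{1/\delta}(0)}H(G_\delta\ast\rho)\d x+f_2(Z_\delta^{-1})\int_{S(\O)\cap B_{1/\delta}(0)}(G_\delta\ast\rho)\d x.
\ees
Since $Z_\delta\to1$, continuity of $f_1,f_2$ yields $f_1(Z_\delta^{-1})\to f_1(1)=1$ and $f_2(Z_\delta^{-1})\to f_2(1)=0$, and the last integral is at most $1$, so the second term vanishes. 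Under \ref{hyp:general1} one has $H\geq0$, so restricting the domain only decreases the first integral and it is bounded by $\int_\R H(G_\delta\ast\rho)\d x\leq E(\rho)$; taking $\limsup_{\delta\to0}$ then gives $\limsup_{\delta\to0}E(\rho_\delta)\leq E(\rho)$. This is precisely where \eqref{eq:upper-bound} is needed, to absorb the nonlinear rescaling by $Z_\delta^{-1}$.

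When $H(x)=x\log x$ (for which \eqref{eq:upper-bound} holds with $f_1(\alpha)=\alpha$, $f_2(\alpha)=\alpha\log\alpha$), $H$ is no longer non-negative, so restricting the integration domain is not monotone and the truncation error must be controlled directly; this is the main obstacle. Here I would write $\int_{S(\O)\cap B_{1/\delta}(0)}H(G_\delta\ast\rho)\d x=\int_\R H(G_\delta\ast\rho)\d x-\int_{\R\setminus(S(\O)\cap B_{1/\delta}(0))}H(G_\delta\ast\rho)\d x$ and bound the negative part of the correction term using the Carleman estimate of Lemma \ref{lem:carleman}: on the discarded tail $G_\delta\ast\rho$ is small, and the elementary bound $u\abs{\log u}\leq\sqrt u$ for $u\in[0,1]$ together with the uniform control of $M_2(G_\delta\ast\rho)=M_2(\rho)+\delta^2$ shows this contribution tends to $0$. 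Establishing that this tail entropy vanishes is the delicate point; once it is in hand, the same $f_1,f_2$ argument closes the heat case and completes the lemma.
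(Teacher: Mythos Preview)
Your argument for $\rho_\delta\in\G(\O)$ and for the $d_2$-convergence is fine and matches the paper's (the paper simply cites \cite[Lemma 7.1.10]{AGS} for the latter). The energy bound is where your route and the paper's diverge.

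You apply Jensen to $H(G_\delta\ast\rho)$ \emph{first}, obtaining $\int_\R H(G_\delta\ast\rho)\leq E(\rho)$, and only afterwards invoke \eqref{eq:upper-bound} to absorb the rescaling by $Z_\delta^{-1}$. This forces you to compare $\int_{S_\delta}H(G_\delta\ast\rho)$ with $\int_\R H(G_\delta\ast\rho)$, which is harmless when $H\geq0$ but, as you note, requires a separate tail estimate $\int_{\R\setminus S_\delta}|H_-(G_\delta\ast\rho)|\to0$ in the heat case. That estimate is only sketched: the Carleman splitting controls $\int_\R|H_-|$, not its restriction to the shrinking complement, and when $\O=\R$ the set $\R\setminus S_\delta$ also contains $\R\setminus\supp\rho$, which does not shrink with $\delta$; handling this piece needs an additional argument you have not supplied.

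The paper reverses the order: it applies Jensen directly to $H\big((G_\delta\ast\rho)/g_\delta\big)$, i.e.\ \emph{after} the rescaling, and then Fubini turns the bound into
\[
E(\rho_\delta)\leq\int_\O H\!\left(\frac{\rho(y)}{g_\delta}\right)\big(G_\delta\ast\chi_{S_\delta}\big)(y)\d y.
\]
Now \eqref{eq:upper-bound} is applied to $H(\rho/g_\delta)$ rather than to $H(G_\delta\ast\rho)$, yielding $f_1(g_\delta^{-1})\int_\O H(\rho)\,(G_\delta\ast\chi_{S_\delta})+f_2(g_\delta^{-1})\int_\O\rho\,(G_\delta\ast\chi_{S_\delta})$. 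Since $0\leq G_\delta\ast\chi_{S_\delta}\leq1$ and $G_\delta\ast\chi_{S_\delta}\to1$ pointwise, a single dominated convergence with dominating function $|H\circ\rho|\in L^1$ (Lemma~\ref{lem:carleman}) handles both the \ref{hyp:general1} case and the heat case at once, with no truncation error to estimate. This reordering is the trick you are missing; it makes the case split and the delicate tail control unnecessary.
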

\begin{proof}
	Write $S_\delta(\O) := S(\O) \cap B_{1/\delta}(0)$ and $g_{\delta}(\O):= \|G_\delta \ast \rho\|_{L^1(S_\delta(\O))}$ for all $\delta > 0$. Checking that $\rho_\delta \in \G(\O)$ for all $\delta >0$ is straightforward, whereas $d_2(\rho_\delta,\rho) \to 0$ comes from the facts that $g_{\delta}(\O) \to 1$ as $\delta \to0$ and that convolutions of probability measures with finite second moments converge in $d_2$ to their original measures, see \cite[Lemma 7.1.10]{AGS}. Then
\bes
	E(\rho_\delta) = \int_{\O} H(\rho_\delta(x)) \d x = \int_{S_\delta(\O)} H\left(\frac{G_\delta \ast \rho(x)}{g_\delta(\O)} \right) \d x.
\ees
By Jensen's inequality and Fubini's theorem, we deduce
\bes
	E(\rho_\delta) \leq \int_{S_\delta(\O)} G_\delta \ast H\left(\frac{\rho(x)}{g_\delta(\O)}\right) \d x = \int_{\O} H\left(\frac{\rho(y)}{g_\delta(\O)}\right) G_\delta\ast\chi_{S_\delta(\O)}(y) \d y.
\ees
By \eqref{eq:upper-bound} we get
\begin{align*}
	E(\rho_\delta) &\leq f_1\left(\frac{1}{g_\delta(\O)}\right) \int_{\O} H(\rho(y)) G_\delta\ast\chi_{S_\delta(\O)}(y) \d y + f_2\left(\frac{1}{g_\delta(\O)}\right) \int_{\O} \rho(y) G_\delta\ast\chi_{S_\delta(\O)}(y) \d y.
\end{align*}
Now we want to use Lebesgue's dominated convergence theorem. First note that $G_\delta\ast\chi_{S_\delta(\O)} \leq 1$ for all $\delta >0$ and $G_\delta\ast\chi_{S_\delta(\O)} \to 1$ as $\delta \to0$ pointwise. Since $E(\rho) = \int_\O H(\rho(y)) \d y$ is assumed to be finite, we know by \eqref{eq:energy-l1} that $H\circ\rho \in L^1(\O)$; also, recall that $\int_\O \rho = 1$. We can therefore pass to the limit $\delta\to0$ inside the integrals of the inequality above. Then, by the assumptions on the functions $f_1$ and $f_2$, and since $g_\delta(\O) \to1$ as $\delta \to0$, we get the desired result. 
\end{proof}

We can finish the proof of Theorem \ref{thm:gamma-convergence}.
\begin{proof}[Proof of Theorem \ref{thm:gamma-convergence}]
	Let $\rho\in\P_2(\O)$. As already mentioned, we only need to find a recovery sequence for $\rho$. For all $\delta >0$, let $\rho_\delta$ be as defined in Lemma \ref{lem:mollifier}. By Lemma \ref{lem:limsup-continuous}, the sequence $(\mu_N^\delta)_{N\geq2}$ with particles given in \eqref{eq:initial-set} is a recovery sequence for $\rho_\delta$, i.e., $d_2(\mu_N^\delta,\rho_\delta) \to0$ as $N\to\infty$ and $\limsup_{N\to\infty} E(\mu_N^\delta) \leq E(\rho_\delta)$. Therefore, for every $\delta>0$, there exists $N(\delta)\geq2$ such that
\bes
	E(\mu_{N(\delta)}^\delta) \leq E(\rho_\delta) + \delta \quad \mbox{and} \quad d_2(\mu_{N(\delta)}^\delta,\rho) \leq \delta+d_2(\rho_\delta,\rho),
\ees
where the second inequality is obtained as in Remark \ref{rem:stability-initial}. By Lemma \ref{lem:mollifier} we also have $d_2(\rho_\delta,\rho)\to0$ as $\delta \to0$ and $\limsup_{\delta \to 0} E(\rho_\delta) \leq E(\rho)$, which gives
\bes
	\limsup_{\delta \to 0} E(\mu_{N(\delta)}^\delta) \leq \limsup_{\delta\to0}(E(\rho_\delta) + \delta) \leq E(\rho) \quad \mbox{and} \quad \lim_{\delta \to0} d_2(\mu_{N(\delta)}^\delta,\rho) \leq \lim_{\delta \to0} (\delta+ d_2(\rho_\delta,\rho)) = 0.
\ees
The subsequence $(\mu_{N(\delta)}^\delta)_{\delta>0}$ is therefore a recovery sequence for $\rho \in \Pac(\O)$.
\end{proof}

\section{Condition on the local slopes}
\label{sec:conditions}

To finish the proof of Theorem \ref{thm:gradflow}, we need to check \ref{cond:slopes}. In this section, we write $g:=|\p E|$ and $g_N:=|\p E_N|$, and we take $\O = [-\ell,\ell]$. In this case, by the doubling condition \eqref{eq:doubling}, the local slope $g$ of $E$ is given in the lemma below, see \cite[Theorem 4.16]{Ambrosio}.
\begin{lem} \label{lem:sugE}
Let $H$ be as in \eqref{hyp:heat} or let it satisfy \ref{hyp:general1}. The local slope of $E$ is given by
\bes
	g(\rho) = \sqrt{I(\rho)} \quad \mbox{for all $\rho \in D(I)$},
\ees
where the \emph{(generalised) Fisher information} $I\: \P_2([-\ell,\ell]) \to [0,\infty]$ is defined by
\be\label{eq:fisher}
	I(\rho) = \begin{cases} 
		\displaystyle \int_{-\ell}^\ell \rho'(x)^2H''(\rho(x))^2\rho(x) \d x & \begin{array}{l} \mbox{if $\rho\in\Pac([-\ell,\ell])$}\\ \mbox{and $(H'\circ \rho)\rho -H\circ \rho \in W^{1,1}([-\ell,\ell])$,} \end{array}\\
		+\infty & \begin{array}{l} \mbox{otherwise.} \end{array}
	\end{cases}
\ee
\end{lem}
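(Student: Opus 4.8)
The plan is to reduce the statement to the abstract characterisation of the Wasserstein subdifferential of internal energy functionals, namely \cite[Theorem 4.16]{Ambrosio} (in the spirit of \cite[Theorem 10.4.13]{AGS}), and then to carry out the one-dimensional computation that turns the resulting abstract expression into the Fisher information $I$. The structural facts that make this applicable are already recorded: $E$ is displacement convex (via $H(0)=0$ and $h$ convex and non-increasing, see the discussion after \ref{hyp:general1}), so that $|\p E|$ is a strong upper gradient and coincides with the $L^2_\rho$-norm of the element of minimal norm $\p^0 E(\rho)$ of the Wasserstein subdifferential; moreover the doubling condition \eqref{eq:doubling} is precisely the hypothesis under which the cited theorem produces an \emph{explicit} formula for $\p^0 E(\rho)$ rather than just an abstract selection.

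First I would recall the abstract statement in the form adapted to our setting. Introducing the pressure $L_H(z) := zH'(z) - H(z)$, the cited theorem asserts that $\rho \in D(|\p E|)$ if and only if $L_H(\rho) \in W^{1,1}([-\ell,\ell])$ and there exists $w \in L^2_\rho([-\ell,\ell])$ with $(L_H(\rho))' = \rho\,w$ in the sense of distributions; in that case $w = \p^0 E(\rho)$ and
\bes
	g(\rho) = |\p E|(\rho) = \left(\int_{-\ell}^\ell w(x)^2 \rho(x)\d x\right)^{1/2}.
\ees
On the bounded interval with the no-flux (Neumann) structure no boundary contribution enters, so this is exactly the shape of formula we are after.

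Next I would perform the one-dimensional identification. Since $L_H'(z) = z H''(z)$, the chain rule gives, whenever $\rho \in W^{1,1}([-\ell,\ell])$, that $(L_H(\rho))' = \rho\,H''(\rho)\,\rho'$, which forces the representative to be $w = H''(\rho)\,\rho'$ and yields
\bes
	\int_{-\ell}^\ell w(x)^2 \rho(x)\d x = \int_{-\ell}^\ell \rho'(x)^2 H''(\rho(x))^2 \rho(x)\d x = I(\rho).
\ees
Hence $g(\rho) = \sqrt{I(\rho)}$, provided I check that the domain produced by the abstract theorem coincides with $D(I) = \{\rho \in W^{1,1}([-\ell,\ell]) : I(\rho) < \infty\}$. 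This domain matching is the main point requiring care and is where I expect the real work to lie: I must verify that, in one dimension, the condition ``$L_H(\rho)\in W^{1,1}$ with distributional gradient $\rho w$ for some $w\in L^2_\rho$'' is equivalent to ``$\rho \in W^{1,1}$ and $I(\rho)<\infty$.'' The monotonicity of $L_H'(z)=zH''(z)$ (from convexity of $H$) lets one pass between regularity of the pressure and regularity of $\rho$ itself, with the only delicate issue being the set where $H''$ degenerates or $\rho$ vanishes—there, however, the integrand $\rho'^2 H''(\rho)^2\rho$ vanishes as well, so the two descriptions still agree.

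Finally I would dispatch the heat case, where $H(x)=x\log x$ is not non-negative so \ref{hyp:general1} does not apply verbatim. Here $L_H(z)=z$, so the pressure is trivially regular and $w=\rho'/\rho$, giving $I(\rho)=\int_{-\ell}^\ell (\rho')^2/\rho$, the classical Fisher information; the displacement convexity needed to invoke the abstract result still holds, and the Carleman estimate of Lemma \ref{lem:carleman} guarantees $E(\rho)>-\infty$ so that the slope is meaningful. In summary, the computation is routine once the abstract theorem is in hand; the genuine obstacle is the precise matching of the abstract domain $D(|\p E|)$ with $D(I)$ and the verification that the integrability hypotheses of the cited theorem are met, under both \ref{hyp:general1} and the heat case, thanks to the doubling condition \eqref{eq:doubling}.
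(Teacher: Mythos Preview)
Your proposal is correct and follows exactly the paper's approach: the paper simply cites \cite[Theorem 4.16]{Ambrosio} (invoking the doubling condition \eqref{eq:doubling}) and states the lemma without further argument. You reproduce this citation and additionally spell out the pressure computation $L_H'(z)=zH''(z)$ and the domain matching, which the paper omits entirely; so your write-up is, if anything, more detailed than what the paper provides.
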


We want to prove the following, i.e., the condition \ref{cond:slopes}.

\begin{lem} \label{lem:gradflow-cond2}
	Let $H$ be as in \eqref{hyp:heat}, or let it satisfy \ref{hyp:general1} and \ref{hyp:general2}, and let $(\mu_N)_{N\geq2}$ and $\rho$ be as in Theorem \ref{thm:gradflow}. Then we have
\bes
	\displaystyle \liminf_{N\to \infty} g_N(\mu_N(t)) \geq g(\rho(t)) \quad \mbox{for all $t \in [0,T]$}.
\ees
\end{lem}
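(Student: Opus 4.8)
The plan is to reduce the statement to a lower-semicontinuity argument for a discrete Dirichlet energy written in the Lagrangian (mass) variable. First I would dispose of the trivial case: if $\liminf_{N\to\infty}g_N(\mu_N(t))=+\infty$ there is nothing to prove, so I may pass to a subsequence (not relabelled) along which the $\liminf$ is attained and $\sup_N g_N(\mu_N(t))=:C<+\infty$. Next I would invoke the explicit characterisation of the minimal-norm subdifferential from Lemmas \ref{lem:subdiff-EN} and \ref{lem:cases}: since $\wtE$ is convex, $g_N(\mu_N)=|\p_w^0\wtE(\bxN)|_w$, and a direct computation using $h(s)=sH(1/s)$ gives $h'(Nr_i)=-P(\rho_i)$, where $\rho_i:=1/(Nr_i)$ is the density of the $i$-th blob and $P(s):=sH'(s)-H(s)$ is the pressure, with $P'(s)=sH''(s)\ge0$. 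Writing $\pi_i:=P(\rho_i)$, the Euclidean partial derivatives of $\wtE$ away from ties are the discrete differences $\p\wtE/\p x_j=\pi_{j+1}-\pi_j$, so the weighted minimal-norm subgradient has components $V_j=N(\pi_{j+1}-\pi_j)$ and
\[
 g_N(\mu_N)^2=\tfrac1N\textstyle\sum_j V_j^2=N\sum_j(\pi_{j+1}-\pi_j)^2,
\]
up to corrections at the (non-generic) indices where $\Delta x_j=\Delta x_{j+1}$, which the minimal-norm selection of Lemma \ref{lem:cases} controls and which I would show contribute only $o(1)$ using the inter-particle bounds of Lemmas \ref{lem:bounds-distances} and \ref{lem:properties-interparticle}.

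The second step is to recognise the right-hand side as a discrete $H^1$ seminorm in the mass variable. Passing to the pseudo-inverse $\Phi$ of $\rho(t)$ and changing variables $x=\Phi(\eta)$, $\d x=\rho^{-1}\d\eta$, one checks that $\p_x P(\rho)=\rho\,\p_\eta[P(\rho\circ\Phi)]$, whence the Fisher information of Lemma \ref{lem:sugE} takes the clean form
\[
 I(\rho)=\int_{-\ell}^\ell\frac{(\p_x P(\rho))^2}{\rho}\,\d x=\int_0^1\Big(\tfrac{\d}{\d\eta}\pi(\eta)\Big)^2\d\eta,\qquad \pi:=P(\rho\circ\Phi).
\]
On the discrete side the $\pi_i$ are exactly the values of a step function on the mass mesh $\{i/N\}$, so $N\sum_j(\pi_{j+1}-\pi_j)^2$ is the squared $L^2([0,1])$-norm of the derivative of the piecewise-linear interpolant $\Pi_N$ of the $\pi_i$. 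Thus $g_N(\mu_N)^2\ge\|\Pi_N'\|_{L^2([0,1])}^2-o(1)$, and the no-flux conventions \eqref{eq:convention2} together with Lemma \ref{lem:boundary-particles-fixed} guarantee that the boundary blobs at $\pm\ell$ enter with the natural Neumann condition, so no boundary term is lost.

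The third step is compactness and identification of the limit. From $\sup_N g_N(\mu_N(t))\le C$ the interpolants $\Pi_N$ are bounded in $H^1([0,1])$; up to a further subsequence $\Pi_N\rightharpoonup\Pi$ weakly in $H^1$ and uniformly. It remains to identify $\Pi=\pi=P(\rho(t)\circ\Phi)$. For this I would use that the pseudo-inverses converge in $L^2([0,1])$ (this is $d_2(\mu_N(t),\rho(t))\to0$, equivalent on $[-\ell,\ell]$ to the narrow convergence of Lemma \ref{lem:compactness}) and that the reconstructed densities satisfy $\rho_N(t)\wto\rho(t)$ (Lemma \ref{lem:narrow-rhoN}); since hypothesis \ref{hyp:general2} forces $H''>0$, the pressure $P$ is strictly increasing and hence invertible, which lets me recover $\rho(t)$ from the limit $\Pi$ and conclude $\Pi=\pi$. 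In particular $\pi\in H^1([0,1])$, so $\rho(t)\in W^{1,1}([-\ell,\ell])$ and $I(\rho(t))<+\infty$, and weak lower semicontinuity of the $L^2$-norm gives
\[
 \liminf_{N\to\infty}g_N(\mu_N(t))^2\ge\liminf_{N\to\infty}\|\Pi_N'\|_{L^2}^2\ge\|\pi'\|_{L^2}^2=I(\rho(t))=g(\rho(t))^2,
\]
which is the claim.

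The main obstacle is twofold. First, turning the cumbersome minimal-norm subdifferential of Lemma \ref{lem:cases} into the single clean sum $N\sum_j(\pi_{j+1}-\pi_j)^2$ requires careful bookkeeping of the $\min$ in $r_i$: a gap may be claimed by zero, one or two neighbouring blobs, and the minimal-norm selection at ties must be shown to perturb the sum only by $o(1)$; here the uniform inter-particle bounds $a_1/N\le\Delta x_i\le a_2/N$ are essential. Second---and this is where \ref{hyp:general2} enters---identifying the limit and passing from the discrete differences $\pi_{j+1}-\pi_j$ to $\p_\eta\pi$ needs uniform control of the coefficient $P'(s)=sH''(s)$ across the (bounded, bounded-away-from-zero) range of the $\rho_i$; the bound $H''(\alpha x)\ge f(\alpha)H''(x)$ supplies precisely such a uniform modulus, while strict convexity $H''>0$ makes $P$ invertible so that the weak $H^1$ limit of the pressures determines $\rho(t)$ uniquely. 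I expect this identification of the limit pressure, rather than the lower-semicontinuity step itself, to be the crux.
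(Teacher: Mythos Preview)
Your Lagrangian approach---rewriting the Fisher information as the $H^1([0,1])$ seminorm of the pressure $\pi=P(\rho\circ\Phi)$ in the mass variable and concluding by weak lower semicontinuity of $\|\Pi_N'\|_{L^2}$---is a genuinely different route from the paper's. The paper stays in the spatial variable: it builds an explicit interpolant $\trho$ of the blob density on $[-\ell,\ell]$ (equations \eqref{eq:rho-tilde-l} and \eqref{eq:rho-tilde-general}), shows $\trho\wto\rho$ narrowly, computes $I(\trho)$ directly, proves $g(\trho)^2\le(1+\epsilon)g_N(\mu_N)^2$ via the case-by-case inequality $|z_i|\ge|\psi_{i+1}-\psi_i|$ extracted from Lemma \ref{lem:cases}, and then invokes narrow lower semicontinuity of the Fisher information to obtain $\liminf g(\trho)\ge g(\rho)$. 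Your approach trades the explicit interpolant and the narrow lsc of $I$ for $H^1$ compactness in the mass variable; it is conceptually cleaner and avoids the ad hoc choice of $\trho$, at the price of having to identify the weak limit $\Pi$.

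Two points need tightening. First, the pressures must be indexed by the gaps, not the radii: set $\pi_i:=P(1/(N\Delta x_i))$ rather than $P(\rho_i)=P(1/(Nr_i))$; with this choice $\psi_i=N\pi_i$ and the case analysis of Lemma \ref{lem:cases} gives, in \emph{every} configuration, $|z_i|\ge|\psi_{i+1}-\psi_i|$, whence $g_N(\mu_N)^2\ge N\sum_i(\pi_{i+1}-\pi_i)^2$. The equality you write does \emph{not} hold, and the deviations are not confined to ties: at every local extremum of $i\mapsto\Delta x_i$ (cases $(L,R,\ast)$, $(\ast,L,R)$, etc.) the component $z_i$ differs from $\psi_{i+1}-\psi_i$, and these indices are generic. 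Fortunately only the inequality is needed. Second, the identification $\Pi=\pi$ needs one more line than you wrote: from uniform convergence $\Pi_N\to\Pi$ and continuity of $P^{-1}$ you get $1/P^{-1}(\Pi_N)\to 1/P^{-1}(\Pi)$ uniformly, while $1/P^{-1}(\Pi_N)$ interpolates the values $N\Delta x_i$ on the mass mesh, so $\int_0^\eta 1/P^{-1}(\Pi_N)\to \Phi(\eta)+\ell$ by the $L^2$-convergence of pseudo-inverses; hence $1/P^{-1}(\Pi)=\Phi'$ a.e.\ and $\Pi=P(\rho\circ\Phi)=\pi$. With these fixes your argument goes through; note that you use \ref{hyp:general2} only via $H''>0$ (invertibility of $P$), whereas the paper also uses the scaling bound $H''(\alpha x)\ge f(\alpha)H''(x)$ at the very end to pass from $g(m_N\trho)$ to $g(\trho)$.
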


We proceed progressively for the sake of readability: we first place ourselves in the case of the heat equation and then extend the result to the general case when $H$ satisfies \ref{hyp:general1} and \ref{hyp:general2}.

\subsection{The heat equation}\label{subsec:he}

\subsubsection{Preliminaries}
We want to compute explicitly the local slope $g_N$ of $\E$. First, note that this is identifiable with the minimal norm element of the subdifferential of $\wtE$. Indeed,
\be \label{eq:sugEN}
	g_N(\mu_N) = |\p_w^0\wtE(\bxN)|_w \quad \mbox{for all $\mu_N \in \A_{N,w}([-\ell,\ell])$ with particles $\bxN \in [-\ell,\ell]_w^N$}.
\ee
This comes from \cite[Proposition 1.4.4]{AGS} and the Hilbert structure of $[-\ell,\ell]_w^N$, see Definition \ref{defn:weighted-ip}. Then we need to compute the subdifferential of $\wtE$ and its minimal norm element. To this end, introduce the following notation. Given $\bxN \in[-\ell,\ell]_w^N$, we write $(\lambda^-,\lambda,\lambda^+)\in \Lambda(\bxN)$ if
\bes
	\lambda_i^- \begin{cases} = 0 & \mbox{if $\Delta x_i>\Delta x_{i-1}$},\\ \in [0,1] & \mbox{if $\Delta x_i=\Delta x_{i-1}$},\\ = 1 & \mbox{if $\Delta x_{i}<\Delta x_{i-1}$},\end{cases} \quad \lambda_i \begin{cases} = 0 & \mbox{if $\Delta x_{i+1}>\Delta x_i$},\\ \in [0,1] & \mbox{if $\Delta x_{i+1}=\Delta x_i$},\\ = 1 & \mbox{if $\Delta x_{i+1}<\Delta x_{i}$},\end{cases} \quad \lambda_i^+ \begin{cases} = 0 & \mbox{if $\Delta x_{i+2}>\Delta x_{i+1}$},\\ \in [0,1] & \mbox{if $\Delta x_{i+2}=\Delta x_{i+1}$},\\ =1 & \mbox{if $\Delta x_{i+2}<\Delta x_{i+1}$}\end{cases}
\ees
for all $i\in \{1,\dots,N\}$, with the convention that $\Delta x_1 > \Delta x_0$ and $\Delta x_{N+1} > \Delta x_{N+2}$. Note that the triplet $(\lambda^-,\lambda,\lambda^+)$ contains for each particle the answer to the question ``is the closest neighbour to that particle to the right?'', unless both neighbours are at equal distance. With this notation, we can give the following characterisation of $\p_w \wtE$.
\begin{lem}\label{lem:subdiff-EN}
	Take $\bxN\in[-\ell,\ell]_w^N$. We have
\bes
	\p_w \wtE(\bxN) = \big\{ \bz \in \R_w^N \mid \exists\, (\lambda^-,\lambda,\lambda^+) \in \Lambda(\bxN), z_i = (\lambda_i-\lambda_i^+ +1)\psi_{i+1} - (\lambda_i^- -\lambda_i + 1)\psi_i, 1\leq i \leq N \big\},
\ees
where $\psi_i:= 1/\Delta x_i$ for all $i\in\{1,\dots,N\}$.
\end{lem}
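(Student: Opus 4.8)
The plan is to reduce the computation to ordinary convex subdifferential calculus and then carry out an explicit, if bookkeeping-heavy, chain rule. By Definition \ref{defn:weighted-ip}, $\bz \in \p_w\wtE(\bxN)$ if and only if $\frac1N\bz \in \p\wtE(\bxN)$, the latter being the usual subdifferential of $\wtE$ viewed as a convex function on the relevant open subset of $\R^N$; convexity and finiteness of $\wtE$ were already established in the proof of Proposition \ref{prop:discrete-gradient-flow-well-posed}. Writing $\wtE(\bxN)=\frac1N\sum_{i=1}^N F_i(\bxN)$ with $F_i(\bxN)=h(N\min(\Delta x_i,\Delta x_{i+1}))$ and, in the heat-equation case of this subsection, $h(x)=-\log x$, each $F_i$ is convex and continuous wherever the relevant inter-particle distances are positive. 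I would therefore invoke the Moreau--Rockafellar sum rule, whose qualification condition is automatic by continuity, to obtain $\p\wtE=\frac1N\sum_{i=1}^N\p F_i$, so that $\p_w\wtE=\sum_{i=1}^N\p F_i$ and the factors $1/N$ cancel.

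Next I would compute each $\p F_i$ by a chain rule for the composition of the convex, non-increasing, smooth function $h$ with the concave piecewise-linear map $\bxN\mapsto r_i=\min(\Delta x_i,\Delta x_{i+1})$. Its superdifferential is the segment $\{(1-\lambda_i)(e_i-e_{i-1})+\lambda_i(e_{i+1}-e_i):\lambda_i\in[0,1]\}$, collapsing to a single gradient off the tie set $\{\Delta x_i=\Delta x_{i+1}\}$, where $\lambda_i$ is exactly the selector appearing in $\Lambda(\bxN)$. Since $h'\leq0$, combining the subgradient inequality for $h$ with the concavity of $r_i$ shows that $\p F_i=\{Nh'(Nr_i)\,s: s\text{ in the superdifferential of }r_i\}$. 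For $h(x)=-\log x$ one has $Nh'(Nr_i)=-1/r_i$, and since $\psi_i=\psi_{i+1}$ at a tie this factor distributes unambiguously, giving $\p F_i=\{-\psi_i(1-\lambda_i)(e_i-e_{i-1})-\psi_{i+1}\lambda_i(e_{i+1}-e_i):\lambda_i\text{ admissible}\}$.

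Finally I would sum over $i$ and read off the $i$-th component, which receives contributions only from $F_{i-1}$, $F_i$ and $F_{i+1}$; after grouping the $\psi_i$- and $\psi_{i+1}$-terms this yields $z_i=(\lambda_i-\lambda_{i+1}+1)\psi_{i+1}-(\lambda_{i-1}-\lambda_i+1)\psi_i$. The stated formula follows upon observing that the decorated selectors are defined so that $\lambda_i^-=\lambda_{i-1}$ and $\lambda_i^+=\lambda_{i+1}$, since $\lambda_i^\pm$ and $\lambda_{i\pm1}$ compare the very same pair of inter-particle distances; this relabelling is precisely the content of $\Lambda(\bxN)$. The end cases $i=1,N$ are handled by the fictitious-particle conventions \eqref{eq:convention2}: for an interior endpoint $\Delta x_1=+\infty$ gives $\psi_1=0$ and forces $\lambda_1=1$, while the boundary conventions $\Delta x_1>\Delta x_0$ and $\Delta x_{N+1}>\Delta x_{N+2}$ fix $\lambda_1^-$ and $\lambda_N^+$; an endpoint sitting on $\pm\ell$ is covered by the convention making $\Delta x_1=\Delta x_2$, hence $\psi_1=\psi_2$. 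I expect the main obstacle to be the chain-rule step at tie configurations, namely correctly identifying the subdifferential of the $\min$-composition as a segment while tracking the sign of $h'$, together with the reindexing bookkeeping needed to recast everything in the $(\lambda^-,\lambda,\lambda^+)$ notation; the sum rule and the boundary conventions are comparatively routine.
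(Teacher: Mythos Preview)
Your approach is essentially the paper's: both reduce to convex subdifferential calculus, invoke the sum rule, and handle each $h\circ\min$ term via a chain rule parametrized by a selector $\lambda$. The paper works coordinate-by-coordinate (computing the partial subdifferentials $\p_w^i\wtE$ and then forming their Cartesian product), whereas you work summand-by-summand (computing each $\p F_i$ in $\R^N$ and Minkowski-summing); these are two orderings of the same calculation.

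One caveat on your final identification step. Your derivation produces a single selector $\lambda_i$ per summand $F_i$, so in the resulting formula the entries $\lambda_i^-$ and $\lambda_{i-1}$ (and likewise $\lambda_i^+$ and $\lambda_{i+1}$) are \emph{forced} to coincide. The definition of $\Lambda(\bxN)$, however, does not impose this linkage: at a tie $\Delta x_i=\Delta x_{i+1}$ it lets $\lambda_{i-1}^+$, $\lambda_i$ and $\lambda_{i+1}^-$ range independently over $[0,1]$. So your ``relabelling is precisely the content of $\Lambda(\bxN)$'' remark is not quite accurate; what your Minkowski-sum computation produces is the genuine subdifferential, while the statement as written (and the paper's concluding step $\p_w\wtE=\prod_i\p_w^i\wtE$, which is not a general fact for non-separable convex functions) describes, in tie configurations, a potentially larger set. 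This discrepancy is harmless for the downstream use in Lemma~\ref{lem:cases}, since the minimal-norm elements computed there lie in your linked-selector set, but you should not assert that the two descriptions coincide.
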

\begin{proof}
	Let $i\in\{1,\dots,N\}$. Compute the subdifferential with respect to the coordinate $x_i$, i.e.,
\bes
	\p_w^i \wtE(\bxN) := \left\{z_i\in \R \,\Big|\, \liminf_{y\to x_i} \frac{\wtE(x_1,\dots,x_{i-1},y,x_{i+1},\dots,x_N) - \wtE(\bxN) - \frac{z_i}{N}(y-x_i)}{|y-x_i|} \geq 0\right\}.
\ees
To this end, first check that
\be\label{eq:ri-sub1}
	\textstyle{\frac1N} \p_w^i r_{i-1} = \begin{cases} \{0\} & \mbox{if $\Delta x_{i-1} < \Delta x_i$},\\ [0,1] & \mbox{if $\Delta x_{i-1} = \Delta x_i$},\\  \{1\} & \mbox{if $\Delta x_{i-1} > \Delta x_i$},\end{cases} \quad \textstyle{\frac1N} \p_w^i r_i = \begin{cases} \{1\} & \mbox{if $\Delta x_i < \Delta x_{i+1}$},\\ [-1,1] & \mbox{if $\Delta x_i = \Delta x_{i+1}$},\\  \{-1\} & \mbox{if $\Delta x_i > \Delta x_{i+1}$},\end{cases}
\ee
and
\be\label{eq:ri-sub2}
	\textstyle{\frac1N} \p_w^i r_{i+1} = \begin{cases} \{-1\} & \mbox{if $\Delta x_{i+1} < \Delta x_{i+2}$},\\ [-1,0] & \mbox{if $\Delta x_{i+1} = \Delta x_{i+2}$},\\  \{0\} & \mbox{if $\Delta x_{i+1} > \Delta x_{i+2}$}.\end{cases}
\ee
Note that, for the specific case of the heat equation, \eqref{eq:energy-discrete} reads as $\wtE(\bxN) = (1/N) \sum_{i=1}^N h(Nr_i) = - \log N -(1/N) \sum_{i=1}^{N} \log r_i$. The function $h$ is a smooth convex function on $(0,\infty)$ so that we can apply the sum rule of subdifferential calculus, see \cite[Section 1.3.4]{Mord} for a detailed account on subdifferential calculus. Therefore, since the particle $x_i$ may only appear in $r_{i-1},r_i$ or $r_{i+1}$,
\be\label{eq:subdiff}
	\p_w^i \wtE(\bxN) = \begin{cases} -\textstyle \frac1N\p_w^i \log r_{i-1} -\frac1N\p_w^i\log r_i -\frac1N\p_w^i\log r_{i+1} & \mbox{if $i \in \{2,\dots,N-1\}$,}\\ -\textstyle \frac1N\p_w^1 \log r_1 -\frac1N \p_w^1 \log r_2 & \mbox{if $i=1$}, \\ -\textstyle \frac1N\p_w^N \log r_{N-1} -\frac1N \p_w^N \log r_N & \mbox{if $i=N$}. \end{cases}
\ee
Since $h$ is non-increasing, $\wtE(\bxN) = (1/N) \sum_{i=1}^N \max[h(N\Delta x_i),h(N\Delta x_{i+1})]$; the function $h$ being smooth, this allows one to apply the chain rule of subdifferential calculus in \eqref{eq:subdiff}. Therefore, by \eqref{eq:ri-sub1} and \eqref{eq:ri-sub2} we get:
\bes
	z_i \in \p_w^i \wtE(\bxN) \iff \exists\, \lambda_i^-,\lambda_i,\lambda_i^+ \in \Lambda(\bxN)\, \text{ with }\, z_i = \frac{\lambda_i-\lambda_i^+ +1}{\Delta x_{i+1}} - \frac{\lambda_i^- -\lambda_i + 1}{\Delta x_i},
\ees
which is the result since the subdifferential of the convex function $\wtE$ at $\bxN$ is
\bes
	\p_w\wtE(\bxN) = \p_w^1\wtE(\bxN) \times \dots \times \p_w^N\wtE(\bxN). \qedhere
\ees
\end{proof}

We introduce the following notation. If $\bxN\in[-\ell,\ell]_w^N$, then for each $i \in \{1,\dots,N\}$ we write $x_i \in (A_{i-1},A_i,A_{i+1})$, where, for any $j \in \{i-1,i,i+1\}$, $A_j =$ ``$R$'' if $\Delta x_j > \Delta x_{j+1}$, $A_j =$``$E$'' if $\Delta x_j = \Delta x_{j+1}$, and $A_j =$ ``$L$'' if $\Delta x_j < \Delta x_{j+1}$. By convention we set $A_0 =$ ``$L$'' and $A_{N+1} =$ ``$R$''. The notation ``$R$" stands for ``Right" (the closest particle to the one considered is the right one), ``E'' stands for ``Equal'', and ``$L$" stands for ``Left".

With this notation we give now the minimal norm element in Lemma \ref{lem:cases}. Its proof is direct by choosing the triplets $(\lambda_i^-,\lambda_i,\lambda_i^+)$ of Lemma \ref{lem:subdiff-EN} in the best possible way so as to minimise the absolute value of $z_i$.
\begin{lem}\label{lem:cases}
	Take $\bxN\in[-\ell,\ell]_w^N$ and write $\bz := (z_1,\dots,z_N) = \p_w^0\wtE(\bxN)$. Then, for each $i\in\{1,\dots,N\}$, according to each case, the component $z_i$ is given by the following, where again we write $\psi_i:=1/\Delta x_i$ for all $i\in\{1,\dots,N\}$.
\begin{longtable}{rcl}
$x_i \in  (R,R,L)$ & $\lambda_i^- = \lambda_i = 1, \lambda_i^+ = 0$ & $z_i = 2\psi_{i+1} - \psi_i$\\
$x_i \in  (E,R,L)$ & $\lambda_i^-\in[0,1],\lambda_i = 1,\lambda_i^+=0$ & $z_i = 2\psi_{i+1} - \psi_i$\\
$x_i \in  (L,R,R)$ & $\lambda_i^-  = 0, \lambda_i = \lambda_i^+ = 1$ & $z_i = \psi_{i+1}$\\
$x_i \in  (L,R,E)$ & $\lambda_i^- = 0,\lambda_i = 1,\lambda_i^+\in[0,1]$ & $z_i = \psi_{i+1}$\\
$x_i \in  (L,R,L)$ & $\lambda_i^- = \lambda_i^+ = 0,\lambda_i = 1$ & $z_i = 2\psi_{i+1}$\\
$x_i \in  (R,L,R)$ & $\lambda_i^- = \lambda_i^+ = 1,\lambda_i = 0$ & $z_i = - 2\psi_i  $\\
$x_i \in  (R,L,E)$ & $\lambda_i^- = 1,\lambda_i = 0,\lambda_i^+\in[0,1]$ & $z_i = \psi_{i+1} - 2\psi_i  $\\
$x_i \in  (R,L,L)$ & $\lambda_i^- = 1,\lambda_i = \lambda_i^+ = 0$ & $z_i = \psi_{i+1} - 2\psi_i  $\\
$x_i \in  (E,L,R)$ & $\lambda_i^- \in [0,1], \lambda_i = 0, \lambda_i^+ = 1$ & $z_i = - \psi_i  $\\
$x_i \in  (L,L,R)$ & $\lambda_i^- = \lambda_i = 0,\lambda_i^+ = 1$ & $z_i = -\psi_i  $\\
$x_i \in  (R,R,R)$ & $\lambda_i^- = \lambda_i = \lambda_i^+ = 1$ & $z_i = \psi_{i+1} - \psi_i$\\
$x_i \in  (R,R,E)$ & $\lambda_i^- = \lambda_i = 1, \lambda_i^+ \in [0,1]$ & $z_i = \psi_{i+1} - \psi_i$\\
$x_i \in  (E,R,R)$ & $\lambda_i^- \in [0,1],\lambda_i = \lambda_i^+ =1$ & $z_i = \psi_{i+1} - \psi_i$\\
$x_i \in  (E,R,E)$ & $\lambda_i^-,\lambda_i^+\in[0,1],\lambda_i = 1$ & $z_i = \psi_{i+1} - \psi_i$\\
$x_i \in  (E,L,E)$ & $\lambda_i^-,\lambda_i^+ \in [0,1], \lambda_i = 0$ & $z_i = \psi_{i+1} - \psi_i  $\\
$x_i \in  (E,L,L)$ & $\lambda_i^- \in[0,1], \lambda_i = \lambda_i^+ = 0$ & $z_i = \psi_{i+1} - \psi_i  $\\
$x_i \in  (L,L,E)$ & $\lambda_i^- = \lambda_i = 0,\lambda_i^+ \in [0,1]$ & $z_i = \psi_{i+1} - \psi_i  $\\
$x_i \in  (L,L,L)$ & $\lambda_i^- = \lambda_i = \lambda_i^+ = 0$ & $z_i = \psi_{i+1} - \psi_i  $
\end{longtable}
\noindent Moreover, if $x_i\in(A,E,B)$ for any $(A,B) \in \{R,E,L\}$, i.e., $\Delta x_{i+1} = \Delta x_i$ ($\lambda_i \in [0,1]$), then $z_i = 0$.
\end{lem}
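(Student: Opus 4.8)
The plan is to exploit the product structure of the subdifferential obtained in Lemma \ref{lem:subdiff-EN} and reduce the vector minimisation defining $\p_w^0\wtE$ to $N$ independent scalar ones. Since $\wtE$ is convex and lower semi-continuous (Proposition \ref{prop:discrete-gradient-flow-well-posed}), the set $\p_w\wtE(\bxN)$ is closed and convex and hence carries a unique element of minimal norm, so $\p_w^0\wtE(\bxN)$ is well defined. In the characterisation of Lemma \ref{lem:subdiff-EN} the $i$-th component $z_i = (\lambda_i-\lambda_i^+ +1)\psi_{i+1} - (\lambda_i^- -\lambda_i + 1)\psi_i$ depends only on the triplet $(\lambda_i^-,\lambda_i,\lambda_i^+)$, and each of these parameters ranges over its own interval (a single point, or all of $[0,1]$) independently of the parameters attached to the other indices. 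Thus the constraint set for $\bz$ is a Cartesian product, and minimising $|\bz|_w^2=\frac1N\sum_{i=1}^N z_i^2$ decouples into $N$ separate problems: for each $i$ I pick $(\lambda_i^-,\lambda_i,\lambda_i^+)$ inside the ranges prescribed by $\Lambda(\bxN)$ so as to make $|z_i|$ smallest. The ranges are read off from $(A_{i-1},A_i,A_{i+1})$: an ``$R$'' forces the matching $\lambda$ to $1$, an ``$L$'' forces it to $0$, and an ``$E$'' leaves it free in $[0,1]$.

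I would dispose of the degenerate case $A_i=E$ first, which yields the ``Moreover'' claim. Here $\Delta x_{i+1}=\Delta x_i$, so $\psi_{i+1}=\psi_i$ and $\lambda_i$ is free in $[0,1]$, and the formula collapses to $z_i=(2\lambda_i-\lambda_i^- -\lambda_i^+)\psi_i$. Since $(\lambda_i^-+\lambda_i^+)/2\in[0,1]$ is an admissible value for $\lambda_i$, selecting it gives $z_i=0$, the obvious minimiser; this settles all nine configurations with $A_i=E$ simultaneously.

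For $A_i=R$ one has $\lambda_i=1$ and $\Delta x_i>\Delta x_{i+1}$, i.e. $\psi_{i+1}>\psi_i>0$, so $z_i=(2-\lambda_i^+)\psi_{i+1}-\lambda_i^-\psi_i$. As $2-\lambda_i^+\geq1$ and $\lambda_i^-\psi_i\leq\psi_i<\psi_{i+1}$, every admissible choice gives $z_i>0$; hence minimising $z_i$ means taking $\lambda_i^+$ and $\lambda_i^-$ as large as their ranges permit. The maximal $\lambda_i^+$ is $1$ when $A_{i+1}\in\{R,E\}$ and $0$ when $A_{i+1}=L$, and likewise the maximal $\lambda_i^-$ is $1$ when $A_{i-1}\in\{R,E\}$ and $0$ when $A_{i-1}=L$; substituting reproduces exactly the nine table rows with $A_i=R$. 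The case $A_i=L$ is symmetric: then $\lambda_i=0$, $\psi_{i+1}<\psi_i$, and $z_i=(1-\lambda_i^+)\psi_{i+1}-(1+\lambda_i^-)\psi_i<0$ for all admissible parameters, so $|z_i|$ is minimised by making $\lambda_i^+$ and $\lambda_i^-$ as small as possible, namely $0$ unless forced to $1$ by $A_{i+1}=R$, respectively $A_{i-1}=R$; this gives the remaining nine rows.

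I do not anticipate a genuine obstacle. The decisive structural fact is that, once $\lambda_i$ is pinned by $A_i$, the quantity $z_i$ is affine and strictly sign-definite in the free parameters, which sends the optimum to a vertex of the parameter box and turns the lemma into pure bookkeeping. The only points needing care are the strict positivity (respectively negativity) of $z_i$ in the $R$ and $L$ cases, which rests precisely on $\psi_{i+1}$ being larger (respectively smaller) than $\psi_i$ as encoded by $A_i$, and the endpoint indices $i=1,N$, where the conventions $A_0=L$ and $A_{N+1}=R$ (with possibly $\psi_1=0$ or $\psi_{N+1}=0$ in the interior boundary case) are already built into Lemma \ref{lem:subdiff-EN} and so require no separate treatment.
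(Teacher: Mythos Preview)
Your proposal is correct and follows exactly the approach the paper sketches in one line: minimise each $|z_i|$ separately by choosing the triplet $(\lambda_i^-,\lambda_i,\lambda_i^+)$ optimally within the ranges dictated by $\Lambda(\bxN)$, using the product structure $\p_w\wtE(\bxN)=\p_w^1\wtE(\bxN)\times\cdots\times\p_w^N\wtE(\bxN)$ established at the end of the proof of Lemma~\ref{lem:subdiff-EN}. Your organisation by the middle letter $A_i$ and the sign-definiteness argument (forcing the optimum to a vertex of the admissible box) is a clean way to carry out the bookkeeping the paper leaves implicit.
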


We give now the lemma ensuring that boundary particles stay at the boundary at all times. 
\begin{lem}\label{lem:boundary-particles-fixed}
	Let $\bxN \in [-\ell,\ell]_w^N$ be as in Theorem \ref{thm:gradflow}. Then $x_N(t) = -x_1(t) = \ell$ for all $t \in [0,T]$.
\end{lem}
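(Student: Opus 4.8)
The plan is to read off the first and last components of the minimal-norm subdifferential $\p_w^0\wtE$ from Lemma \ref{lem:cases}, exploit the fictitious-particle convention \eqref{eq:convention2} to see that the velocity of a boundary particle vanishes exactly when it sits on $\p\O$ and points strictly inward otherwise, and then close with an elementary continuity argument. I will prove $x_1(t)\equiv-\ell$; the identity $x_N(t)\equiv\ell$ follows by the symmetric argument.

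First I would record the constraints coming from the hypotheses. Since $(\mu_N^0)_{N\geq2}$ is well-prepared for $\rho_0\in\G([-\ell,\ell])$ and $\rho_0\in\G(\O)$ forces $r=\ell$ in Definition \ref{defn:smooth}, Definition \ref{defn:initial-set} gives $x_1(0)=-\ell$. Moreover $\bxN(t)\in[-\ell,\ell]_w^N$ for every $t$, so $x_1(t)\geq-\ell$ throughout $[0,T]$. By Proposition \ref{prop:discrete-gradient-flow-well-posed} the solution obeys $\bxN'(t)=-\p_w^0\wtE(\bxN(t))$ for almost every $t$, and I denote its first component by $-z_1(t)$, with $z_1(t)$ given by Lemma \ref{lem:cases}.

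Next I would pin down $z_1$ in the two relevant configurations. When $x_1(t)\in(-\ell,\ell)$, convention \eqref{eq:convention2} sets $x_0=-\infty$, whence $\Delta x_1=+\infty>\Delta x_2$ and $\psi_1=0$; thus $A_1=$``$R$'', and the rows $(L,R,R)$, $(L,R,E)$, $(L,R,L)$ of Lemma \ref{lem:cases} give $z_1\in\{\psi_2,2\psi_2\}$, which is strictly positive because $\psi_2=1/\Delta x_2\in(0,\infty)$. When instead $x_1(t)=-\ell$, convention \eqref{eq:convention2} sets $x_0=-2\ell-x_2$, so that $\Delta x_1=\ell+x_2=\Delta x_2$; then $x_1\in(L,E,A_2)$ and the final statement of Lemma \ref{lem:cases} yields $z_1=0$. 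In words: $x_1'(t)<0$ whenever $x_1(t)>-\ell$, while $x_1'(t)=0$ whenever $x_1(t)=-\ell$.

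Finally I would argue by contradiction and continuity. Suppose $x_1(t_0)>-\ell$ for some $t_0\in(0,T]$ and set $a=\sup\{t\leq t_0\mid x_1(t)=-\ell\}$, which is well-defined since $x_1(0)=-\ell$, and which satisfies $x_1(a)=-\ell$ by continuity. By maximality of $a$ we have $x_1(t)>-\ell$ on $(a,t_0]$, hence $x_1'(t)=-z_1(t)<0$ for almost every $t\in(a,t_0)$; integrating the absolutely continuous $x_1$ gives $x_1(t_0)=-\ell+\int_a^{t_0}x_1'(s)\,\d s<-\ell$, contradicting $x_1(t)\geq-\ell$. Therefore $x_1\equiv-\ell$ on $[0,T]$. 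Repeating the argument at the right endpoint, where $A_{N+1}=$``$R$'', $\psi_{N+1}=0$, and the rows $(R,L,R)$, $(E,L,R)$, $(L,L,R)$ of Lemma \ref{lem:cases} give $z_N<0$ in the interior (so $x_N'>0$ there) while $z_N=0$ at $x_N=\ell$, shows $x_N\equiv\ell$. The only delicate step, and the one I expect to be the main obstacle, is correctly translating convention \eqref{eq:convention2} into the equality $\Delta x_1=\Delta x_2$ at the boundary so that the degenerate ``$E$'' case of Lemma \ref{lem:cases} applies and forces $z_1=0$, together with checking that \emph{every} admissible interior configuration yields a strictly inward velocity; once these signs are fixed, the invariance is immediate.
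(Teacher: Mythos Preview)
Your proof is correct and follows essentially the same route as the paper: both read off the sign of the boundary component of $\p_w^0\wtE$ from Lemma~\ref{lem:cases} via the fictitious-particle convention \eqref{eq:convention2}, and both close with a continuity/barrier argument. The only minor differences are that the paper invokes Brezis's result to have the right-derivative equal $-\p_w^0\wtE$ at \emph{every} $t$ (rather than the a.e.\ identity you use before integrating), and that the paper also treats the case $x_1(\tau)<-\ell$, which you legitimately omit since the hypothesis $\bxN(t)\in[-\ell,\ell]_w^N$ already rules it out.
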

\begin{proof}
	Since $\bxN$ is well-prepared initially for $\rho_0 \in \G([-\ell,\ell])$, we have $x_N(0) = -x_1(0) = \ell$. We want to show that this holds for all times after 0. Proposition \ref{prop:discrete-gradient-flow-well-posed} tells us that $\bxN'(t) = -\p_w^0 \E(\bxN(t))$ for almost every $t\in[0,T]$; but we actually have $\der^+\bxN/\der t(t) = -\p_w^0 \E(\bxN(t))$ for all $t \in[0,T]$, where $\der^+/\der t$ stands for the right-derivative in time, see \cite[Theorem 3.1]{Brezis}. Suppose first, by contradiction, that $x_1(\tau) > -\ell$ for some arbitrarily small time $\tau>0$. Then, by \eqref{eq:convention2}, $x_0(\tau) = - \infty$, and therefore, by Lemma \ref{lem:cases}, $\der^+ x_1/\der t(\tau) < 0$ since $x_1(\tau) \in (L,R,A)$ for some $A \in \{R,E,L\}$---and analogously for $x_N$. Suppose now that $x_1(\tau) < -\ell$. Then, by \eqref{eq:convention2}, $\Delta x_1(\tau) < \Delta x_2(\tau)$, and therefore, by Lemma \ref{lem:cases}, $\der^+ x_1/\der t(\tau) > 0$ since $x_1(\tau) \in (L,L,A)$ for some $A \in \{R,E,L\}$---and analogously for $x_N$. Hence $\der^+ x_1/\der t(t) = \der^+ x_N/\der t(t) = 0$ for all $t\in[0,T]$, and we get the result.
\end{proof}

The following two lemmas give a control on how the inter-particle distances behave.

\begin{lem} \label{lem:bounds-distances}
	Let $\bxN \in [-\ell,\ell]_w^N$ be as assumed in Theorem \ref{thm:gradflow}. Then, for all $t\in[0,T]$,
\bes
	\textstyle a_1N^{-1} \leq \Delta x_i(t) \leq a_2N^{-1} \quad \mbox{for all $i\in\{2,\dots,N\}$}.
\ees
The constants $a_1$ and $a_2$ are those of Definition \ref{defn:initial-set} for the well-prepared set $\boldsymbol{x_N^0}$ for $\rho_0$.
\end{lem}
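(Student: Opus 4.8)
The plan is to prove both bounds by a discrete maximum/minimum principle for the inter-particle distances. Since the claimed bounds hold at $t=0$ by the well-preparedness of $\boldsymbol{x_N^0}$ (Definition \ref{defn:initial-set}), it suffices to show that the extremal gaps are monotone in time, namely that
\bes
	M(t):=\max_{2\le i\le N}\Delta x_i(t)\ \ \text{is non-increasing}\quad\text{and}\quad m(t):=\min_{2\le i\le N}\Delta x_i(t)\ \ \text{is non-decreasing.}
\ees
This immediately gives $\Delta x_i(t)\le M(t)\le M(0)\le a_2/N$ and $\Delta x_i(t)\ge m(t)\ge m(0)\ge a_1/N$ for all $i$ and all $t$, which is the statement.

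First I would record the tools. By Proposition \ref{prop:discrete-gradient-flow-well-posed} the energy $\wtE$ is convex, so by \cite[Theorem 3.1]{Brezis} the right time-derivative $\der^+\bxN/\der t(t)=-\p_w^0\wtE(\bxN(t))=:-\bz(t)$ exists for every $t\in[0,T]$; moreover $x_1\equiv-\ell$ and $x_N\equiv\ell$ by Lemma \ref{lem:boundary-particles-fixed}, so in particular $z_1\equiv z_N\equiv0$. Writing $\psi_i=1/\Delta x_i$, Lemma \ref{lem:cases} gives $\bz$ explicitly, and since $\Delta x_i=x_i-x_{i-1}$ we have $\der^+\Delta x_i/\der t=z_{i-1}-z_i$ for every $t$. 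The reflective convention \eqref{eq:convention2}, at $x_1=-\ell$, forces $\Delta x_1=\Delta x_2$ (and symmetrically $\Delta x_{N+1}=\Delta x_N$), i.e. $A_1=\text{``}E\text{''}$ and $A_N=\text{``}E\text{''}$ in the notation preceding Lemma \ref{lem:cases}. The $\Delta x_i$ are continuous with right-derivatives everywhere, hence $M$ and $m$ are continuous and obey the elementary selection rule $\der^+M/\der t=\max_{i\in J(t)}(z_{i-1}-z_i)$ and $\der^+m/\der t=\min_{i\in J'(t)}(z_{i-1}-z_i)$, where $J(t)$ and $J'(t)$ are the sets of maximizing, respectively minimizing, indices at time $t$.

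The heart of the argument is then a sign check carried out with Lemma \ref{lem:cases}. Fix $t$ and let $i\in J(t)$, so that $\Delta x_{i-1}\le\Delta x_i\ge\Delta x_{i+1}$, equivalently $\psi_{i-1}\ge\psi_i\le\psi_{i+1}$ (the maximal gap has the smallest inverse-distance). I would split according to the triplet $(A_{i-1},A_i,A_{i+1})$: when $A_i=\text{``}E\text{''}$ (a plateau interior, including the boundary tie $\Delta x_1=\Delta x_2$) Lemma \ref{lem:cases} gives $z_i=0$ and the partner term is read off to be of the same sign, so $z_{i-1}-z_i\le0$; when $i$ is a strict local maximum one finds $z_{i-1}<0$ and $z_i>0$; the remaining edge cases combine a vanishing term with a signed one. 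In every instance the inequalities $\psi_i\le\psi_{i\pm1}$ yield $z_{i-1}-z_i\le0$, whence $\der^+M/\der t\le0$. The minimum is symmetric: at a minimizing index $\psi_i\ge\psi_{i\pm1}$ is the largest inverse-distance, and the same table entries give $z_{i-1}-z_i\ge0$ (for example $z_{i-1}-z_i=4\psi_i-\psi_{i-1}-\psi_{i+1}\ge2\psi_i>0$ in the generic strict case), so $\der^+m/\der t\ge0$. Since a continuous function on $[0,T]$ with non-positive (resp. non-negative) right-derivative is non-increasing (resp. non-decreasing), this proves the monotonicity of $M$ and $m$ and hence the lemma.

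The main obstacle I anticipate is bookkeeping rather than conceptual: one must verify the sign of $z_{i-1}-z_i$ across all configurations of $(A_{i-1},A_i,A_{i+1})$ in Lemma \ref{lem:cases}, paying particular attention to ties and flat plateaus — where the interior gaps have zero right-derivative, so that the extremum is merely \emph{maintained} rather than strictly decreased, which is exactly what the selection rule requires — and to the two boundary gaps $\Delta x_2$ and $\Delta x_N$. For these last, the key simplification is that Lemma \ref{lem:boundary-particles-fixed} pins $x_1$ and $x_N$ while \eqref{eq:convention2} makes $\Delta x_1=\Delta x_2$ and $\Delta x_{N+1}=\Delta x_N$, so the boundary gaps enter precisely as the $A_1=\text{``}E\text{''}$ (resp. $A_N=\text{``}E\text{''}$) rows of the table, and require no separate treatment.
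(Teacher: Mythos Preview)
Your proposal is correct and takes essentially the same approach as the paper: a discrete minimum/maximum principle obtained by reading off, via Lemma~\ref{lem:cases}, the sign of the right-time-derivative of the extremal inter-particle distance, together with Lemma~\ref{lem:boundary-particles-fixed} and the convention~\eqref{eq:convention2} to handle the boundary gaps. The paper is terser---it phrases the argument through a ``curve of indices'' $i(t)$ realizing the extremum and asserts $\der\Delta x_{i(t)}/\der t(t)\ge0$ (resp.\ $\le0$) for a.e.\ $t$ directly from the table---whereas you make the selection rule for $\der^+M/\der t$ and $\der^+m/\der t$ explicit; this is the same mechanism stated more carefully.
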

\begin{proof}
	We first show the left-hand side inequality. Take a ``curve'' of indices $i\: [0,T]\to \{2,\dots,N\}$ such that $\Delta x_{i(t)}(t) = \min_{j\in\{2,\dots,N\}} \Delta x_j(t)$ for all $t\in[0,T]$. Note that, for $t,\tau\in [0,T]$, $x_{i(t)}(\tau)$ denotes the position of the particle $x_{i(t)}$, that is, the right-particle of any minimal inter-particle interval at time $t$, at time $\tau$; obviously, if $t\neq\tau$, $\Delta x_{i(t)}(\tau)$ may not be equal to the minimal inter-particle distance at time $\tau$. We have, for all $t\in[0,T]$,
\bes
	x_{i(t)}(t) \in \bigcup_{\substack{(A,B,C)\in\{R,E\}\times\{E,L\}\\ \phantom{{}===={}}\times\{R,E,L\}}} (A,B,C) \quad \mbox{and} \quad x_{i(t)-1}(t) \in \bigcup_{\substack{(A,B,C)\in\{R,E,L\}\times\{R,E\}\\ \phantom{{}===={}}\times\{E,L\}}} (A,B,C),
\ees
recalling that, by Lemma \ref{lem:boundary-particles-fixed} and \eqref{eq:convention2}, $\Delta x_{i(t)}(t) = \Delta x_{i(t)-1}(t)$ if $i(t) =2$ and $\Delta x_{i(t)}(t) = \Delta x_{i(t)+1}(t)$ if $i(t) =N$. From Proposition \ref{prop:discrete-gradient-flow-well-posed} and Lemma \ref{lem:cases}, we then see that $\der \Delta x_{i(t)}/\der t(t) \geq 0$ for almost every $t\in[0,T]$. Therefore, by integrating between $0$ and $t$, we get
\be\label{eq:min-principle}
	\Delta x_{i(t)}(t) \geq \Delta x_{i(0)}(0) \quad \mbox{for all $t\in[0,T]$},
\ee 
which, with $\boldsymbol{x_N^0}$ being well-prepared for $\rho_0$ with constants $a_1$ and $a_2$, gives the result.

For the right-hand side inequality, we define $i\: [0,T]\to \{2,\dots,N\}$ such that $\Delta x_{i(t)}(t) = \max_{j\in\{2,\dots,N\}} \Delta x_j(t)$ for all $t\in[0,T]$, and then proceed similarly as above to get this time that $\der \Delta x_{i(t)}/\der t(t) \leq 0$ for almost every $t\in[0,T]$. Then, by integrating between $0$ and $t$,
\be\label{eq:max-principle}
	\Delta x_{i(t)}(t) \leq \Delta x_{i(0)}(0) \quad \mbox{for all $t\in[0,T]$},
\ee
which ends the proof since again $\boldsymbol{x_N^0}$ is well-prepared for $\rho_0$ with constants $a_1$ and $a_2$.
\end{proof}

	Lemma \ref{lem:bounds-distances} shows that under the hypotheses of Theorem \ref{thm:gradflow}, no particles of a discrete gradient flow solution can collide at any time in $[0,T]$. Equations \eqref{eq:min-principle} and \eqref{eq:max-principle} show respectively the existence of a weak minimum principle and a weak maximum principle at the discrete level.

\begin{lem}\label{lem:properties-interparticle}
	Let $(\mu_N)_{N\geq2}$ be as assumed in Theorem \ref{thm:gradflow} and suppose that $\liminf_{N\to\infty} g_N(\mu_N(t))$ is finite for all $t\in[0,T]$. Then, for all $t\in[0,T]$,
\be\label{eq:uniform-ratios}
	\max_{i\in\{2,\dots,N-1\}} \left|\frac{\Delta x_{i+1}(t)}{\Delta x_i(t)}-1\right| \xrightarrow[N\to\infty]{}0.
\ee
\end{lem}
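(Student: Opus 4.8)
The plan is to read off the components of the minimal-norm subdifferential element from Lemma \ref{lem:cases} and play them against the two-sided spacing bound of Lemma \ref{lem:bounds-distances}. Fix $t\in[0,T]$ and suppress the time dependence. By \eqref{eq:sugEN} and Definition \ref{defn:weighted-ip}, writing $\bz=\p_w^0\wtE(\bxN)$ and $\psi_i=1/\Delta x_i$, we have
\[
	g_N(\mu_N)^2=|\bz|_w^2=\frac1N\sum_{i=1}^N z_i^2,\qquad\text{so in particular}\qquad |z_i|\le\sqrt N\,g_N(\mu_N)\ \ \text{for every }i.
\]
Since $\liminf_N g_N(\mu_N)$ is finite, I would pass to a subsequence (not relabelled) along which $g_N(\mu_N)\le C$ for some constant $C=C(t)$, so that $|z_i|\le C\sqrt N$ for all $i$ and all $N$ in the subsequence. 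The elementary identity $\Delta x_{i+1}(\psi_{i+1}-\psi_i)=1-\Delta x_{i+1}/\Delta x_i$, together with the upper bound $\Delta x_{i+1}\le a_2/N$ of Lemma \ref{lem:bounds-distances}, gives
\[
	\abs{\frac{\Delta x_{i+1}}{\Delta x_i}-1}=\Delta x_{i+1}\,|\psi_{i+1}-\psi_i|\le\frac{a_2}{N}\,|\psi_{i+1}-\psi_i|,
\]
so it suffices to prove $\max_i|\psi_{i+1}-\psi_i|=o(N)$ along the subsequence.

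Next I would classify each index $i\in\{2,\dots,N-1\}$ by its case in Lemma \ref{lem:cases}. In all cases except ten ``turning'' ones (those in which the sequence $(\Delta x_j)$ has a local extremum next to $x_i$, namely $(R,R,L),(E,R,L),(L,R,R),(L,R,E),(L,R,L),(R,L,R),(R,L,E),(R,L,L),(E,L,R),(L,L,R)$) the table gives exactly $z_i=\psi_{i+1}-\psi_i$. The key point is that the ten turning expressions never cancel below a single reciprocal spacing: the middle letter fixes the sign relation between $\psi_i$ and $\psi_{i+1}$, and one checks case by case that $|z_i|\ge\min(\psi_i,\psi_{i+1})$. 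For instance, $A_i=R$ forces $\Delta x_i>\Delta x_{i+1}$, i.e. $\psi_{i+1}>\psi_i$, so $2\psi_{i+1}-\psi_i>\psi_{i+1}$ and $\psi_{i+1}>\psi_i=\min$; while $A_i=L$ forces $\psi_i>\psi_{i+1}$, so $|\psi_{i+1}-2\psi_i|\ge\psi_{i+1}=\min$ and $|{-\psi_i}|=\psi_i>\psi_{i+1}=\min$. Since $\Delta x_j\le a_2/N$ for $j\in\{2,\dots,N\}$ by Lemma \ref{lem:bounds-distances}, we have $\min(\psi_i,\psi_{i+1})\ge N/a_2$ for every interior $i$, hence $|z_i|\ge N/a_2$ at each turning index.

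Combining the two bounds at a turning index yields $N/a_2\le|z_i|\le C\sqrt N$, that is $\sqrt N\le Ca_2$; thus for $N>C^2a_2^2$ there are no turning indices, and every $i\in\{2,\dots,N-1\}$ falls into a case with $z_i=\psi_{i+1}-\psi_i$. For such $N$ we then get $|\psi_{i+1}-\psi_i|=|z_i|\le C\sqrt N$ for all $i$, whence
\[
	\max_{i\in\{2,\dots,N-1\}}\abs{\frac{\Delta x_{i+1}}{\Delta x_i}-1}\le\frac{a_2}{N}\,C\sqrt N=\frac{Ca_2}{\sqrt N}\xrightarrow[N\to\infty]{}0,
\]
which is \eqref{eq:uniform-ratios} along the subsequence; since this holds for every subsequence realising the finite liminf, the conclusion follows as needed in the proof of Lemma \ref{lem:gradflow-cond2}. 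I expect the main obstacle to be precisely the bookkeeping of the second paragraph: verifying, uniformly over the ten turning cases of Lemma \ref{lem:cases}, that no cancellation drops $|z_i|$ below $\min(\psi_i,\psi_{i+1})$. This no-cancellation fact is exactly what upgrades a mere $\ell^2$-in-$i$ control of $\bz$ (a bound on $\frac1N\sum z_i^2$) into the \emph{uniform}-in-$i$ ratio estimate; a secondary subtlety is that the statement is intrinsically a subsequence statement, so the phrasing must match the sequences actually used downstream.
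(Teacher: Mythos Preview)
Your proof is correct, but the paper's route is shorter. Instead of splitting into turning and non-turning indices, the paper simply observes that $|z_i|\ge|\psi_{i+1}-\psi_i|$ in \emph{every} case of Lemma~\ref{lem:cases}: in the eight non-turning cases this is an equality, and in the ten turning cases it follows from the same sign considerations you carry out (e.g.\ for $(L,R,R)$ one has $z_i=\psi_{i+1}\ge\psi_{i+1}-\psi_i$ since $\psi_i\ge0$; for $(R,L,L)$ one has $|z_i|=2\psi_i-\psi_{i+1}\ge\psi_i-\psi_{i+1}$). Summing then gives directly
\[
	g_N(\mu_N)^2\ \ge\ \frac1N\sum_{i=2}^{N-1}(\psi_{i+1}-\psi_i)^2\ \ge\ \frac{N}{a_2^2}\,\max_{2\le i\le N-1}\Bigl(\frac{\Delta x_{i+1}}{\Delta x_i}-1\Bigr)^2,
\]
and \eqref{eq:uniform-ratios} follows at once. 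Your argument instead extracts from the turning cases the different lower bound $|z_i|\ge\min(\psi_i,\psi_{i+1})\ge N/a_2$, concludes that turning indices cannot exist once $N>C^2a_2^2$, and only then uses the exact identity $z_i=\psi_{i+1}-\psi_i$ on the remaining indices together with the pointwise bound $|z_i|\le C\sqrt N$. This is valid and yields as a by-product a genuine structural fact (along bounded subsequences, for large $N$ the label sequence of Lemma~\ref{lem:cases} has no $R$ adjacent to an $L$, i.e.\ the spacing sequence has no strict interior extremum), but the detour is unnecessary for the lemma itself. Both arguments deliver the same $O(N^{-1/2})$ rate on the maximum ratio and share the subsequence caveat you already flag at the end.
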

\begin{proof}
	We omit in this proof the time dependences for simplicity, and let us use the notation $\psi_i:=1/\Delta x_i$ for all $i\in\{1,\dots,N\}$. By going through each case, Lemma \ref{lem:cases} yields
\bes
	|z_i| \geq \left|\psi_i - \psi_{i+1}\right| \quad \mbox{for all $i\in\{1,\dots,N\}$},
\ees
where $\bz:=(z_1,\dots,z_N) = \p_w^0\E(\bxN) \in \R_w^N$. Then, by \eqref{eq:sugEN},
\be\label{eq:discrete-fisher1}
	g_N(\mu_N)^2 = \frac1N \sum_{i=1}^N z_i^2 \geq \frac1N \sum_{i=1}^N \left(\psi_i  - \psi_{i+1}\right)^2.
\ee
The first and last terms of the sum above are equal to $0$ since $\Delta x_1 = \Delta x_2$ and $\Delta x_N = \Delta x_{N+1}$ by Lemma \ref{lem:boundary-particles-fixed} and therefore $\psi_1=\psi_2$ and $\psi_N=\psi_{N+1}$. It follows that
\bes
	g_N(\mu_N)^2 \geq \frac1N \sum_{i=2}^{N-1} \psi_{i+1}^2 \left( \frac{\psi_{i}}{\psi_{i+1}} - 1 \right)^2 = \frac1N \sum_{i=2}^{N-1} \frac{1}{\Delta x_{i+1}^2} \left( \frac{\Delta x_{i+1}}{\Delta x_i} - 1 \right)^2.
\ees
By Lemma \ref{lem:bounds-distances}, we know that $\Delta x_i \leq a_2/N$. Hence
\bes
	g_N(\mu_N)^2 \geq \frac{N}{a_2^2} \sum_{i=2}^{N-1} \left( \frac{\Delta x_{i+1}}{\Delta x_i} - 1 \right)^2.
\ees
Thus, for $\liminf_{N\to\infty} g_N(\mu_N)$ to be finite, \eqref{eq:uniform-ratios} must hold.
\end{proof}

\begin{rem}\label{rem:gaps}
	The quantity in \eqref{eq:uniform-ratios} controls how the total gap, i.e., the sum of all the gaps between non-overlapping intervals, behaves as $N$ goes to $\infty$. Indeed, a quick computation gives, omitting time dependence, that the total gap is
\begin{align*}
	\sum_{i=1}^{N-1} \left(\Delta x_{i+1} - \frac{r_i}{2} - \frac{r_{i+1}}{2}\right) &= \frac12 \sum_{i=1}^{N-1} \left(2\Delta x_{i+1} - \min(\Delta x_i, \Delta x_{i+1}) - \min(\Delta x_{i+1},\Delta x_{i+2})\right)\\
	& \leq \Delta x_2 + \Delta x_{N} + \sum_{i=2}^{N-2} \frac{\Delta x_{i+1}}{2} \left(2-\min\left(\frac{\Delta x_i}{\Delta x_{i+1}},1\right) - \min\left(1,\frac{\Delta x_{i+2}}{\Delta x_{i+1}}\right)\right)\\
	& \leq \frac{2a_2}{N} + a_2\max_{i\in\{2,\dots,N-1\}} \left|\frac{\Delta x_{i+1}}{\Delta x_i}-1\right|,
\end{align*}
thanks to Lemma \ref{lem:bounds-distances}. Lemma \ref{lem:properties-interparticle} therefore ensures that the total gap goes to 0 as $N$ increases and is controlled by $\max_{i\in\{2,\dots,N-1\}} |\Delta x_{i+1}/\Delta x_i-1|$.
\end{rem}

From now we assume $\liminf_{N\to\infty} g_N(\mu_N)$ is finite, or we are done, so that \eqref{eq:uniform-ratios} holds. We introduce an interpolation between particles. Let $\bxN \in [-\ell,\ell]_w^N$ be as in Theorem \ref{thm:gradflow}, and define
\be\label{eq:rho-tilde-l}
	\trho(x) := \frac{1/m_N}{N\Delta x_{i+1}^2} \left( \frac{x-x_i}{\sqrt{\Delta x_{i+1}}} + \frac{x_{i+1}-x}{\sqrt{\Delta x_i}} \right)^2 \quad \mbox{for $x \in [x_i,x_{i+1}]$, $i\in\{1,\dots,N-1\}$},
\ee
where we omit the time dependences, and where $m_N$ is the normalising constant given by
\bes
	m_N :=\frac{1}{3N}\sum_{i=1}^{N-1} \left(1+ \sqrt{\frac{\Delta x_{i+1}}{\Delta x_i}} + \frac{\Delta x_{i+1}}{\Delta x_i} \right).
\ees
We choose $\trho$ as in \eqref{eq:rho-tilde-l} because it belongs to $\Pac([-\ell,\ell])$, it has a well-defined Fisher information since it is continuous and $\trho>0$, and it gives rise to a simple computation of $g(\trho)$ in Section \ref{subsubsec:proof-slopes}. However, note that choosing $\trho$ to be linear would still work very similarly. We can show that $\trho$ is a good narrow approximation of our limiting measure $\rho$. 

\begin{lem} \label{lem:weak}
	Let $\rho$ be as in Theorem \ref{thm:gradflow}. Then $\trho(t) \wto \rho(t)$ narrowly as $N\to\infty$ for all $t\in[0,T]$. 
\end{lem}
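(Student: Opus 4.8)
The plan is to mimic the proof of Lemma \ref{lem:narrow-rhoN}: since $\mu_N(t)\wto\rho(t)$ narrowly, it suffices to show that $\trho(t)-\mu_N(t)\wto0$ narrowly, that is, that for every bounded Lipschitz function $\varphi$ on $[-\ell,\ell]$ (such functions metrise narrow convergence, and here the domain is compact) with Lipschitz constant $L$,
\[
	\left|\int_{-\ell}^\ell \varphi(x)\trho(x)\d x - \int_{-\ell}^\ell \varphi\,\d\mu_N\right| \xrightarrow[N\to\infty]{}0.
\]
I omit the time dependence throughout and work under the standing assumption that $\liminf_{N\to\infty} g_N(\mu_N)$ is finite, so that \eqref{eq:uniform-ratios} holds.

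First I would compute the mass that $\trho$ assigns to each inter-particle interval. Substituting $x = x_i + t\,\Delta x_{i+1}$ in \eqref{eq:rho-tilde-l} and using the elementary identity $\int_0^1[t+(1-t)s]^2\d t = (1+s+s^2)/3$ with $s=\sqrt{\Delta x_{i+1}/\Delta x_i}$, a direct integration gives
\[
	p_i := \int_{x_i}^{x_{i+1}}\trho(x)\d x = \frac{1}{Nm_N}\cdot\frac13\left(1 + \sqrt{\frac{\Delta x_{i+1}}{\Delta x_i}} + \frac{\Delta x_{i+1}}{\Delta x_i}\right).
\]
Summing over $i\in\{1,\dots,N-1\}$ and recalling the definition of $m_N$ confirms $\int\trho=1$, consistent with $\trho\in\Pac([-\ell,\ell])$. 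Crucially, by \eqref{eq:uniform-ratios}, together with the boundary identities $\Delta x_1=\Delta x_2$ and $\Delta x_N=\Delta x_{N+1}$ supplied by Lemma \ref{lem:boundary-particles-fixed} and convention \eqref{eq:convention2}, each factor $\frac13(1+\sqrt{\Delta x_{i+1}/\Delta x_i}+\Delta x_{i+1}/\Delta x_i)$ tends to $1$ \emph{uniformly} in $i$. Hence $m_N\to1$, and $|p_i-1/N|\le\delta_N/N$ for some $\delta_N\to0$ independent of $i$.

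Next I would estimate in two steps. On each interval, Lipschitzness of $\varphi$ and the bound $\Delta x_{i+1}\le a_2/N$ from Lemma \ref{lem:bounds-distances} give $\big|\int_{x_i}^{x_{i+1}}\varphi\,\trho\,\d x - \varphi(x_i)p_i\big|\le La_2p_i/N$, so that summing yields $\big|\int\varphi\,\trho - \sum_{i=1}^{N-1}\varphi(x_i)p_i\big|\le La_2/N\to0$. I then compare $\sum_{i=1}^{N-1}\varphi(x_i)p_i$ with $\int\varphi\,\d\mu_N=\frac1N\sum_{i=1}^N\varphi(x_i)$: the discrepancy splits into $\sum_{i=1}^{N-1}\varphi(x_i)(p_i-1/N)$, bounded in absolute value by $\|\varphi\|_\infty\,\delta_N\to0$, and the single leftover boundary term $\tfrac1N\varphi(x_N)$, bounded by $\|\varphi\|_\infty/N\to0$. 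Combining the two steps gives the desired convergence, and hence $\trho\wto\rho$.

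The only genuinely delicate point is establishing $m_N\to1$ together with the uniform smallness of $|p_i-1/N|$, which is precisely where \eqref{eq:uniform-ratios}—resting on the finiteness of $\liminf_N g_N(\mu_N)$ and on the distance bounds of Lemma \ref{lem:bounds-distances}—is indispensable; without the uniform control of the ratios $\Delta x_{i+1}/\Delta x_i$ the masses $p_i$ need not be comparable to $1/N$. Everything else is the same bounded-Lipschitz comparison already carried out for $\rho_N$ in the proof of Lemma \ref{lem:narrow-rhoN}.
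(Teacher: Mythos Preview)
Your argument is correct and follows essentially the same route as the paper. The only cosmetic difference is in the decomposition: you compute the interval masses $p_i=\int_{x_i}^{x_{i+1}}\trho$ explicitly and show $|p_i-1/N|\le\delta_N/N$ (hence $m_N\to1$) \emph{before} comparing with $\mu_N$, whereas the paper first works with the unnormalised $\nu_N:=m_N\trho$, estimates $|\nu_N(x)-\nu_N(x_i)|$ pointwise on each interval via a mean-value argument, proves $\nu_N\wto\rho$, and only then reads off $m_N\to1$ from convergence of the total mass. Both arguments hinge on \eqref{eq:uniform-ratios} and Lemma~\ref{lem:bounds-distances} in exactly the same way.
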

\begin{proof}
	For simplicity, we omit the time dependences throughout this proof. Let $\varphi \in C_\mt{b}([-\ell,\ell])$ be Lipschitz with constant $L>0$. Write $\nu_N := m_N \trho$, so that $\nu_N(x_i) = 1/(N\Delta x_i)$ for all $i \in\{1,\dots,N\}$, and $\phi_N:= \big|\int_{-\ell}^\ell \varphi(x) \nu_N(x) \d x - \int_{-\ell}^\ell \varphi(x) \d\mu_N(x)\big|$, and compute
\begin{align*}
	\phi_N &= \left| \sum_{i=2}^N \int_{x_{i-1}}^{x_i} \varphi(x) \nu_N(x) \d x - \frac1N \sum_{i=1}^N \varphi(x_i)\right| \leq \sum_{i=2}^N \int_{x_{i-1}}^{x_i} \left|\varphi(x) \nu_N(x) - \frac{\varphi(x_i)}{N\Delta x_i} \right|\d x + \frac{\left|\varphi(x_1)\right|}{N}\\
	&\leq \sum_{i=2}^N \int_{x_{i-1}}^{x_i} \left|\varphi(x) \nu_N(x) - \varphi(x_i)\nu_N(x_i) \right|\d x + \frac{\norm{\varphi}_\infty}{N}\\
	&\leq \norm{\varphi}_\infty \sum_{i=2}^N \int_{x_{i-1}}^{x_i} |\nu_N(x) - \nu_N(x_i)| \d x + \frac1N \sum_{i=2}^N \int_{x_{i-1}}^{x_i} \frac{|\varphi(x) - \varphi(x_i)|}{\Delta x_i} \d x + \frac{\norm{\varphi}_\infty}{N}.\\
	&\leq \frac{2\norm{\varphi}_\infty}{N} \sum_{i=2}^N \frac{\left| \Delta x_i^{-1/2} - \Delta x_{i-1}^{-1/2} \right|}{\min(\sqrt{\Delta x_{i-1}},\sqrt{\Delta x_i})} \int_{x_{i-1}}^{x_i} \frac{|x-x_i|}{\Delta x_i} \d x+ \frac LN \sum_{i=2}^N \int_{x_{i-1}}^{x_i} \frac{|x-x_i|}{\Delta x_i} \d x + \frac{\norm{\varphi}_\infty}{N}
\end{align*}
By Lemma \ref{lem:bounds-distances}, we have
\bes
	\phi_N  \leq \frac{2a_2\norm{\varphi}_\infty}{a_1} \max_{i\in\{2,\dots,N\}} \left| 1- \sqrt{\frac{\Delta x_i}{\Delta x_{i-1}}} \right| + \frac{La_2+ \norm{\varphi}_\infty}{N} \xrightarrow[N\to\infty]{}0,
\ees
by \eqref{eq:uniform-ratios} and the fact that $\Delta x_1=\Delta x_2$. This shows that $\nu_N \wto \rho$ narrowly as $N\to \infty$ and $\nu_N([-\ell,\ell]) \to \rho([-\ell,\ell]) =1$ as $N\to \infty$, which yields $m_N \to 1$ and thus $\trho \wto \rho$ narrowly.
\end{proof}

\subsubsection{Proof of Lemma \ref{lem:gradflow-cond2}}\label{subsubsec:proof-slopes}

We omit time dependence. The proof of Lemma \ref{lem:gradflow-cond2} now reduces to showing that $\trho$ gives rise also to a good estimate of $g_N(\mu_N)$ and $g(\rho)$, that is
\be\label{eq:intermediate}
	\liminf_{N\to\infty} g_N(\mu_N) \geq \liminf_{N\to\infty} g(\trho) \geq g(\rho),
\ee
where $(\trho)_{N\geq2}$ is the sequence associated to $(\mu_N)_{N\geq 2}$ defined as in \eqref{eq:rho-tilde-l}. For the right-hand inequality, this is an immediate consequence of Lemma \ref{lem:weak} and the narrow lower semi-continuity of $g$, see \cite[Corollary 2.4.10]{AGS}. 
	
	For the specific case of the heat equation, the Fisher information \eqref{eq:fisher} is $I(\rho) = \int_{-\ell}^\ell \rho'(x)^2/\rho(x)\d x$ if $\rho\in W^{1,1}([-\ell,\ell])$. Therefore, for the left-hand inequality, we can compute, by Lemma \ref{lem:sugE},
\begin{align*}
	g(\trho)^2 &= \frac{1/m_N}{N} \sum_{i=1}^{N-1} \frac{4}{\Delta x_{i+1}^2} \int_{x_i}^{x_{i+1}} \left(\frac{1}{\sqrt{\Delta x_{i+1}}} - \frac{1}{\sqrt{\Delta x_i}} \right)^2 \d x = \frac{4/m_N}{N} \sum_{i=1}^{N-1} \frac{\left(\Delta x_{i+1}^{-1} - \Delta x_i^{-1} \right)^2}{\left( 1+\sqrt{\Delta x_{i+1}/\Delta x_i} \right)^2}.
\end{align*}
Let $0< \epsilon < 4$. By \eqref{eq:uniform-ratios}, we have $\Delta x_{i+1}/\Delta x_i \to 1$ as $N\to\infty$, for all $i \in\{1,\dots,N\}$, uniformly in $i$, and, by the proof of Lemma \ref{lem:weak}, $m_N \to 1$. Therefore, there exists $N(\epsilon)$ large enough such that $(1/m_N)/(1+\sqrt{\Delta x_{i+1}/\Delta x_i})^2 < 1/(4 - \epsilon)$ for all $N>N(\epsilon)$ and $i\in\{1,\dots,N-1\}$. Thus, for any such $N$, the equality above becomes
\bes
	g(\trho)^2 \leq \frac{4}{N(4-\epsilon)} \sum_{i=1}^{N-1} \left(\frac{1}{\Delta x_{i+1}} - \frac{1}{\Delta x_i} \right)^2 \leq \frac{4g_N(\mu_N)^2}{(4-\epsilon)}
\ees
by \eqref{eq:discrete-fisher1}. Then, taking the limits $N\to\infty$ and $\epsilon \to0$ in this order, we get the result.\qed

\subsection{General density of internal energy}\label{subsec:general-internal}

We want now to extend Section \ref{subsec:he} to general densities of internal energy $H$ satisfying \ref{hyp:general1} and \ref{hyp:general2}. Note that \ref{hyp:general2} implies $h''(x) > 0$ for all $x\in(0,\infty)$.

\subsubsection{Preliminaries}
We compute the local slope $g_N$ of $\E$. Equation \eqref{eq:sugEN} still holds and we can characterise the subdifferential of $\wtE$ in the same fashion as for the heat equation. In fact, Lemma \ref{lem:subdiff-EN} is still true, where $\psi_i$ takes now the general form $\psi_i:= -Nh'(N\Delta x_i)$ for all $i\in\{1,\dots,N\}$. Note that since $h''>0$ by assumption in Theorem \ref{thm:gradflow}, $h'$ is increasing and therefore $\psi_i > \psi_{i+1}$ if $\Delta x_i > \Delta x_{i+1}$, and vice versa; also, since $h$ is non-increasing, $\psi_i \geq 0$. The minimal norm element is still given by Lemma \ref{lem:cases}, where $\psi_i$ takes its general form. Lemma \ref{lem:boundary-particles-fixed} still holds by the monotonicity property of $(\psi_i)_i$. Lemma \ref{lem:bounds-distances} remains unchanged and can be proved in the same manner, again by monotonicity of $(\psi_i)_i$. Lemma \ref{lem:properties-interparticle} still holds; it is proved similarly as for the heat equation since $h'$ is increasing and non-positive, and, on top of \eqref{eq:uniform-ratios}, the proof also gives
\be\label{eq:uniform-ratios-h}
	\max_{i\in\{2,\dots,N-1\}} \left|\frac{\psi_{i}(t)}{\psi_{i+1}(t)}-1\right| \xrightarrow[N\to\infty]{}0 \quad \mbox{for all $t\in[0,T]$}.
\ee

We need to generalise the definition of the interpolation $\trho$ in \eqref{eq:rho-tilde-l} used in Section \ref{subsubsec:proof-slopes} for the heat equation. To this end we introduce the function $\psi\: (0,\infty) \to [0,\infty)$ by
	\bes
		\psi(x) = -h'(x) \quad \mbox{for all $x\in(0,\infty)$.}
	\ees
Clearly $\psi_i = N\psi(N\Delta x_i)$ for all $i\in\{1,\dots,N\}$, and, since $h'$ is increasing, $\psi$ is decreasing and therefore invertible. Define, omitting the time dependences,
\be\label{eq:rho-tilde-general}
	\trho(x) := \frac{1/m_N}{\psi^{-1}(p_{i,k}(x))} \quad \mbox{for $x \in [x_i,x_{i+1}]$, $i\in\{1,\dots,N-1\}$},
\ee
where $m_N = \int_{-\ell}^\ell 1/\psi^{-1}(p_{i,k}(x)) \d x$ makes $\trho$ belong to $\Pac([-\ell,\ell])$, and where, for any $i\in\{1,\dots,N-1\}$ and $k\in\N$, $p_{i,k}\:[x_i,x_{i+1}] \to (0,\infty)$ is the monotone function
\bes
	p_{i,k}(x) = \left( \frac{x-x_i}{\Delta x_{i+1}} \left(\frac{\psi_{i+1}}{N}\right)^{1/k} + \frac{x_{i+1}-x}{\Delta x_{i+1}} \left(\frac{\psi_{i}}{N}\right)^{1/k}  \right)^k \quad \mbox{for all $x\in[x_i,x_{i+1}]$}.
\ees
Obviously $p_{i,k}(x_i) = \psi_i/N$ and $p_{i,k}(x_{i+1}) = \psi_{i+1}/N$, $m_N\trho(x_i) = 1/(N\Delta x_i)$, and,
\be\label{eq:psi-N}
	N\min(\Delta x_i,\Delta x_{i+1}) \leq \psi^{-1}(p_{i,k}(x)) \leq N\max(\Delta x_i,\Delta x_{i+1}) \quad \mbox{for $x \in [x_i,x_{i+1}]$},
\ee
which, by Lemma \ref{lem:bounds-distances}, yields
\be\label{eq:psi}
	a_1 \leq \psi^{-1}(p_{i,k}(x)) \leq a_2 \quad \mbox{for $x \in [x_i,x_{i+1}]$}.
\ee
In the following we choose the interpolation functions $(p_{i,k})_{i\in\{1,\dots,N-1\}}$ to be linear, i.e., $k=1$; in this case we simply write $p_i = p_{i,1}$. This is only a choice that makes the computations below simpler; any other $k\in\N$ works in a very similar manner. Note that in the case of the heat equation, we choose $k=2$, see \eqref{eq:rho-tilde-l}, as that particular choice makes the calculation of $g(\trho)$ much easier in Section \ref{subsubsec:proof-slopes} because of some cancellations. These simplifications do not hold anymore in this general setting, and we thus pick the simplest interpolations, which are the linear ones. Let us point out that for $\trho$ as in \eqref{eq:rho-tilde-general} the Fisher information is well-defined since $\trho$ is continuous and $\trho >0$. We give here the proof of Lemma \ref{lem:weak} adapted to this general setting.

\begin{lem} \label{lem:weak-general}
	Let $\rho$ be as in Theorem \ref{thm:gradflow}. Then $\trho(t) \wto \rho(t)$ narrowly as $N\to\infty$ for all $t\in[0,T]$. 
\end{lem}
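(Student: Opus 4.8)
The plan is to mirror the proof of Lemma \ref{lem:weak} for the heat equation, replacing the explicit pointwise oscillation estimate available there by a monotonicity argument adapted to the implicit interpolation \eqref{eq:rho-tilde-general} (with $k=1$). Throughout I omit the time dependence and set $\nu_N := m_N\trho$, so that $\nu_N(x) = 1/\psi^{-1}(p_i(x))$ on each interval $[x_i,x_{i+1}]$ and, by construction, $\nu_N(x_i) = 1/(N\Delta x_i)$ at every node. Since $x_1 = -\ell$ and $x_N = \ell$ by Lemma \ref{lem:boundary-particles-fixed}, the intervals $[x_{i-1},x_i]$, $i\in\{2,\dots,N\}$, tile $[-\ell,\ell]$ and $\nu_N$ is supported there. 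Fixing a bounded Lipschitz test function $\varphi$ with constant $L$, I would estimate $\phi_N := \big|\int_{-\ell}^\ell \varphi\nu_N\d x - \int_{-\ell}^\ell\varphi\d\mu_N\big|$ by writing $\varphi(x_i)/N = \int_{x_{i-1}}^{x_i}\varphi(x_i)\nu_N(x_i)\d x$ and summing over $i$, which yields, exactly as in Lemma \ref{lem:weak},
\begin{equation*}
	\phi_N \leq \norm{\varphi}_\infty \sum_{i=2}^N \int_{x_{i-1}}^{x_i}|\nu_N(x) - \nu_N(x_i)|\d x + \frac{L}{N}\sum_{i=2}^N\int_{x_{i-1}}^{x_i}\frac{|x-x_i|}{\Delta x_i}\d x + \frac{\norm{\varphi}_\infty}{N}.
\end{equation*}
The second sum equals $\frac{L}{N}\sum_{i=2}^N\frac{\Delta x_i}{2}\le \frac{L\ell}{N}\to0$, and the boundary term vanishes; both are routine.

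The crux is the first sum, where the explicit bound used in the heat-equation case is no longer available. Here I would exploit that $p_i$ is affine and $\psi^{-1}$ is monotone (as $\psi=-h'$ is strictly decreasing, using $h''>0$ from \ref{hyp:general2}), so that $\nu_N = 1/(\psi^{-1}\circ p_i)$ is monotone on each $[x_{i-1},x_i]$, taking values between its node values $1/(N\Delta x_{i-1})$ and $1/(N\Delta x_i)$. Consequently $|\nu_N(x) - \nu_N(x_i)| \le |\nu_N(x_{i-1}) - \nu_N(x_i)| = \tfrac{1}{N\Delta x_i}|\Delta x_i/\Delta x_{i-1} - 1|$ on that interval, whence
\begin{equation*}
	\sum_{i=2}^N\int_{x_{i-1}}^{x_i}|\nu_N(x)-\nu_N(x_i)|\d x \le \frac1N\sum_{i=2}^N\left|\frac{\Delta x_i}{\Delta x_{i-1}} - 1\right| \le \max_{i\in\{2,\dots,N\}}\left|\frac{\Delta x_i}{\Delta x_{i-1}}-1\right|,
\end{equation*}
which tends to $0$ by \eqref{eq:uniform-ratios} together with $\Delta x_1 = \Delta x_2$ (Lemma \ref{lem:boundary-particles-fixed}). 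Thus $\phi_N\to0$.

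I would then conclude exactly as in Lemma \ref{lem:weak}: since $\mu_N\wto\rho$ narrowly, the bound $\phi_N\to0$ gives $\int_{-\ell}^\ell\varphi\nu_N\d x\to\int_{-\ell}^\ell\varphi\d\rho$ for all bounded Lipschitz $\varphi$, and as all measures are supported in the fixed compact set $[-\ell,\ell]$ this yields $\nu_N\wto\rho$ narrowly. Testing against $\varphi\equiv1$ gives $\nu_N([-\ell,\ell]) = m_N\to\rho([-\ell,\ell]) = 1$, so that $m_N\to1$, and since $\trho = \nu_N/m_N$ we obtain $\trho\wto\rho$ narrowly. The main obstacle, as indicated, is controlling the oscillation of $\trho$ on each inter-particle interval without an explicit formula; the monotonicity of $\psi^{-1}\circ p_i$ is precisely what makes this tractable, reducing the estimate to the node values $1/(N\Delta x_i)$ and hence to the already-established ratio control \eqref{eq:uniform-ratios}.
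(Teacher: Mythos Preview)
Your proof is correct and follows the same overall framework as the paper: the same decomposition of $\phi_N$ into an oscillation term, a Lipschitz term, and a boundary term, followed by the same conclusion that $m_N\to1$ and hence $\trho\wto\rho$.

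The difference lies in how the oscillation term $\sum_i\int_{x_{i-1}}^{x_i}|\nu_N(x)-\nu_N(x_i)|\d x$ is controlled. The paper applies the mean-value theorem to $\nu_N=1/(\psi^{-1}\circ p_{i-1})$, producing a factor $|p_{i-1}'|/(|\psi'\circ\psi^{-1}(p_{i-1})|\,[\psi^{-1}(p_{i-1})]^2)$ and then invoking the two-sided bound \eqref{eq:psi} together with $\min_{[a_1,a_2]}|\psi'|>0$ (which uses $h''>0$ from \ref{hyp:general2}); the resulting estimate is phrased in terms of $\max_i|1-\psi_{i-1}/\psi_i|$ and closed via \eqref{eq:uniform-ratios-h}. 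Your argument instead observes that $\nu_N$ is monotone on each $[x_{i-1},x_i]$ because it is a monotone function of an affine function, so its oscillation is simply $|\nu_N(x_{i-1})-\nu_N(x_i)|=\tfrac{1}{N\Delta x_i}|\Delta x_i/\Delta x_{i-1}-1|$; you then close directly via \eqref{eq:uniform-ratios}. Your route is more elementary: it bypasses the mean-value theorem and the quantitative lower bound on $|\psi'|$, needing only that $\psi$ is strictly monotone (so that $\psi^{-1}$ exists, which the paper already uses to define $\trho$). The paper's route, on the other hand, ties in more visibly with the derivative structure used later in the Fisher-information computation of Section~\ref{subsubsec:proof-slopes-general}.
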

\begin{proof}
	We use the same notation as in the proof of Lemma \ref{lem:weak}. We have
\begin{align*}
	\phi_N &\leq \norm{\varphi}_\infty \sum_{i=2}^N \int_{x_{i-1}}^{x_i} |\nu_N(x) - \nu_N(x_i)| \d x + \frac{La_2+ \norm{\varphi}_\infty}{N}\\
	&\leq \norm{\varphi}_\infty\sum_{i=2}^N \frac{\left| p_{i-1}'(\xi_i(x)) \right|}{\left|\psi'\circ\psi^{-1}(p_{i-1}(\xi_i(x)))\right| [\psi^{-1}(p_{i-1}(\xi_i(x)))]^2} \int_{x_{i-1}}^{x_i} |x-x_i| \d x + \frac{La_2+ \norm{\varphi}_\infty}{N},
\end{align*}
where $\xi_i\: [x_{i-1},x_i] \to (x_{i-1},x_i)$ for all $i\in\{2,\dots,N\}$ are functions stemming from the mean-value theorem.
By Lemma \ref{lem:bounds-distances}, \eqref{eq:psi}, the smoothness of $\psi'$ and linearity of $p_i$, we have
\bes
	\phi_N  \leq \frac{a_2\psi(a_1)\norm{\varphi}_\infty}{a_1^2 \min_{x\in[a_1,a_2]} |\psi'(x)|} \max_{i\in\{2,\dots,N\}} \left| 1- \frac{\psi_{i-1}}{\psi_{i}}\right| + \frac{La_2+ \norm{\varphi}_\infty}{N} \xrightarrow[N\to\infty]{}0,
\ees
by \eqref{eq:uniform-ratios-h} and the fact that $\Delta x_1=\Delta x_2$ and so $\psi_1 = \psi_2$. Note that $\min_{x\in[a_1,a_2]} |\psi'(x)| >0$ since $h''>0$. As in the proof of Lemma \ref{lem:weak}, this shows $m_N \to 1$ and $\trho \wto \rho$ narrowly.
\end{proof}

\subsubsection{Proof of Lemma \ref{lem:gradflow-cond2}}\label{subsubsec:proof-slopes-general}

We omit time dependence. We want to show that \eqref{eq:intermediate} is still true. For the right-hand inequality, this is again an immediate consequence of Lemma \ref{lem:weak-general} and the narrow lower semi-continuity of the local slope $g$.
	
For the left-hand inequality, let us write $\nu_N:=m_N \trho$. By abuse, we can compute the Fisher information at $\nu_N$, even if $\nu_N$ does not necessarily have unit mass. It is easy to check that, for all $i\in\{1,\dots,N-1\}$, the integrand of the Fisher information \eqref{eq:fisher} for $\nu_N$ is
\bes
	\psi'\left(\frac{1}{\nu_N(x)}\right)^2\frac{\nu_N'(x)^2}{\nu_N(x)^5} = p_i'(x)^2 \psi^{-1}(p_i(x)) \quad \mbox{for all $x\in[x_i,x_{i+1}]$}.
\ees
Therefore, by Lemma \ref{lem:sugE} and using \eqref{eq:psi-N} and the linearity of $p_i$,
\bes
	g(\nu_N)^2 = \frac{1}{N^2} \sum_{i=1}^{N-1} \int_{x_{i}}^{x_{i+1}} (\psi_{i+1} - \psi_{i})^2 \frac{\psi^{-1}(p_i(x))}{\Delta x_{i+1}^2} \d x \leq \frac{1}{N} \sum_{i=1}^{N-1}  (\psi_{i+1} - \psi_{i})^2 \max\left(1,\frac{\Delta x_i}{\Delta x_{i+1}}\right).
\ees
Let $\epsilon >0$. By \eqref{eq:uniform-ratios} we have $\Delta x_i/\Delta x_{i+1} \to 1$ as $N\to\infty$, for all $i \in\{1,\dots,N-1\}$, uniformly in $i$. Therefore, there exists $N(\epsilon)$ large enough such that $\min(1,\Delta x_i/\Delta x_{i+1})<1+\epsilon$ for all $N>N(\epsilon)$ and $i\in\{1,\dots,N-1\}$. For such $N$ we then get, by \eqref{eq:discrete-fisher1},
\bes
	g(\nu_N)^2 \leq \frac{1+\epsilon}{N} \sum_{i=1}^{N-1} (\psi_{i+1} - \psi_{i})^2 \leq (1+\epsilon)g_N(\mu_N)^2.
\ees
By taking the limits $N\to\infty$ and $\epsilon\to0$ in this order, we get
\bes
	\liminf_{N\to\infty} g(\nu_N) \leq \liminf_{N\to\infty} g_N(\mu_N).
\ees
In order to conclude, we only need to show that $\liminf_{N\to\infty} g(\nu_N) \geq \liminf_{N\to\infty} g(\trho)$. Compute
\begin{align*}
	g(\nu_N)^2 &= g(m_N\trho) = m_N^3 \int_{-\ell}^\ell \trho'(x)^2 H''(m_N\trho(x))^2\trho(x) \d x \geq m_N^3 f(m_N)^2 g(\trho)^2,
\end{align*}
where $f$ is as in \ref{hyp:general2}, showing the result since $m_N\to1$ by the proof of Lemma \ref{lem:weak-general}.\qed

\section{Extensions}\label{sec:extension-R}

In this section we discuss extensions of the main theorem to the whole real line and general weights. We intentionally give no computations as we only see this section as an outlook for a possible future work.

\subsection{Extension to the whole line}\label{subsec:extension-whole-line}

The extension of the convergence part in the main theorem to the whole real line, i.e., $\O = \R$, is not an easy task. The only part of the proof that needs to be adapted is Section \ref{sec:conditions}, that is, the proof of \ref{cond:slopes} on the lower semi-continuity of the local slopes. We now point out the main arguments of Section \ref{sec:conditions} that need adapting in order to fit the whole-line situation, and we explain where our approach fails.

We first discuss the case of the heat equation given in Section \ref{subsec:he}. Note that the computation of the discrete local slope, given through Equation \eqref{eq:sugEN} to Lemma \ref{lem:cases}, remains unchanged if $\O=\R$. Lemma \ref{lem:bounds-distances} needs to be changed; indeed, the upper bound on the inter-particle distances cannot be preserved, i.e., there is no weak maximum principle at the discrete level if no boundary conditions are imposed. This is because when a maximal inter-particle interval happens to be, for instance, the leftmost one, it can actually get even wider since no (fictitious) particle is on its left to prevent it from moving leftwards faster than its right neighbour; this, in turn, is a consequence of the fact that the speed of propagation for the heat equation is infinite at the continuum level. However, one can still prove that inter-particle distances cannot grow too much. In fact, one can show
\be\label{eq:max}
	a_1N^{-1}\leq \Delta x_i(t) \leq c_N(T) \quad \mbox{for all $i\in\{2,\dots,N\}$, for all $t\in[0,T]$,}
\ee
where $c_N(T) \to \sqrt{2T}$ as $N\to\infty$. This lack of weak maximum principle yields the failure of Lemma \ref{lem:properties-interparticle}, which no longer gives us the uniform behaviour of the inter-particle distances as $N$ increases. This is the crucial fact that makes our approach fail if no boundary conditions are applied. Indeed, we cannot hope to get convergence if we do not have a proper control on how the gaps between the discretisation intervals decrease as $N$ increases, see Remark \ref{rem:gaps}; then, our proof of \eqref{eq:intermediate} fails. The interpolation \eqref{eq:rho-tilde-l} remains almost untouched except at the boundary particles where its Fisher information should be taken so as to match the discrete local slope. It is then still a good narrow approximation of $\rho$, as in Lemma \ref{lem:weak}, if the uniform behaviour \eqref{eq:uniform-ratios} of the inter-particle distances is assumed.

For the case of the general density of internal energy given in Section \ref{subsec:general-internal}, most of the above remarks still hold. However, instead of \eqref{eq:max}, one has
\be\label{eq:max-general}
	a_1N^{-1}\leq \Delta x_i(t) \leq c_N(T) \quad \mbox{for all $i\in\{2,\dots,N\}$, for all $t\in[0,T]$,}
\ee
with
\be\label{eq:Ph}
	c_N(T) = N^{-1} \Psi^{-1}\left( N^2T + \Psi(a_2) \right),
\ee
where $\Psi$ is any antiderivative of $1/\psi$. As for the heat equation case, this lack of ``good'' control on the inter-particle intervals leads to the failure of our proof of \eqref{eq:intermediate}. Here, it is actually not even clear to us whether the interpolation \eqref{eq:rho-tilde-general} is still a good narrow approximation of $\rho$, as it is in Lemma \ref{lem:weak-general}, even if the uniform behaviour \eqref{eq:uniform-ratios} is assumed.

Interestingly, for the case of the porous medium equation ($H(x) = x^{m-1}/(m-1)$ with $m>1$), \eqref{eq:max-general} implies, contrary to the case of the heat equation, that the maximal inter-particle distance actually decreases as $N$ increases, although the weak maximum principle is still not preserved. Indeed, for the porous medium case, one can pick $\Psi(x) = x^{m+1}/(m+1)$, so that $c_{N}(T) \sim ((m+1)T/N^{m-1})^{1/(m+1)}$ as $N\to\infty$ by \eqref{eq:Ph}. This stems from the fact that, at the continuum level, the solution to the porous medium equation is compactly supported at all times, which ensures some compactness at the discrete level as well. Unfortunately, this decreasing behaviour of the inter-particle intervals is still not enough to get their uniform behaviour, and therefore a control on the gaps, as $N$ increases; indeed the support of the solution still spreads (albeit asymptotically) to the whole real line.

Let us illustrate, using the case of the heat equation, the fact that when $\O=\R$ we cannot expect to have a weak maximum principle and that therefore the uniform behaviour of the inter-particle distance should be hoped to come from somewhere else. Adapting the proof of Lemma \ref{lem:properties-interparticle} to the whole-line setting ($\Delta x_1 = \Delta x_{N+1} = \infty$, by \eqref{eq:convention1}) gives that $\Delta x_2 \geq C/\sqrt{N}$ and $\Delta x_N \geq C/\sqrt{N}$ for some $C>0$. This contradicts the fact that the inter-particle distances are of order $1/N$ at all times, as they are initially. However, this does not mean that the proper behaviour of the inter-particle gaps cannot be obtained by other means. Indeed, it could still follow from inter-particle distances of order $1/\sqrt{N}$, rather than $1/N$, in a way we do not know.

In view of these remarks, the question of extending Theorem \ref{thm:gradflow} to the whole real line is still open. However, we can conclude by stating a first result in that direction for the heat equation, where the uniform behaviour of the inter-particle distances is assumed.
\begin{thm}\label{thm:gradflow-R}
	Let $H$ be the density of internal energy for the heat equation. Suppose that $\mu_N \in AC^2([0,T],\A_{N,w}(\O))$, with particles $\bxN \in AC^2([0,T],\O_w^N)$, is a discrete gradient flow solution with initial condition $\mu_N^0 \in \A_{N,w}(\O)$, with particles $\boldsymbol{x_N^0} \in \O_w^N$. Let $\rho_0 \in \G(\O)$ and assume that $(\mu_N^0)_{N\geq2}$ is well-prepared for $\rho_0$ according to Definition \ref{defn:initial-set}. Then there exists $\rho\in AC^2([0,T],\P_2(\O))$ such that $\mu_N(t)\wto \rho(t)$ narrowly as $N\to\infty$ for all $t \in [0,T]$. Moreover, if
\bes
	\max_{i\in\{2,\dots,N-1\}} \left|\frac{\Delta x_{i+1}(t)}{\Delta x_i(t)}-1\right| \xrightarrow[N\to\infty]{}0 \quad \mbox{for all $t\in[0,T]$},
\ees
then $\rho$ is the continuum gradient flow solution associated to \eqref{eq:pde} in the sense of Definition \ref{defn:continuum-gradient-flow}, and \eqref{eq:limits} holds.
\end{thm}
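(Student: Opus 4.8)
The plan is to reproduce the two-step architecture of the proof of Theorem~\ref{thm:gradflow}, isolating precisely the places where $\O=\R$ forces a change. For the first (compactness) assertion I would simply invoke Lemma~\ref{lem:compactness}: its statement and proof are already written for general $\O$, and they rest only on Lemma~\ref{lem:bounds-moment-energy}, which is established for $\O=\R$. This yields directly the existence of $\rho\in AC^2([0,T],\P_2(\R))$ with $\mu_N(t)\wto\rho(t)$ narrowly for every $t$, together with condition~\ref{cond:md}. No use of the uniform-ratio hypothesis is needed here, which matches the unconditional nature of this first part of the statement.

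For the second assertion I would apply Theorem~\ref{thm:serfaty}, so that it remains to verify \ref{cond:liminf} and \ref{cond:slopes} (the recovery-sequence hypothesis being supplied by well-preparedness, and \ref{cond:md} by the previous step). Condition~\ref{cond:liminf} is immediate from Lemma~\ref{lem:narrow-rhoN}, whose proof is valid verbatim for $\O=\R$. Thus the entire content of the theorem reduces to establishing \ref{cond:slopes}, that is, $\liminf_N g_N(\mu_N(t))\ge g(\rho(t))$, under the assumed uniform behaviour \eqref{eq:uniform-ratios}, where as before $g=|\p E|$ and $g_N=|\p\E|$.

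To prove \ref{cond:slopes} I would follow Section~\ref{subsec:he}. The explicit computation of the discrete slope through Lemmas~\ref{lem:subdiff-EN} and \ref{lem:cases}, and the key lower bound \eqref{eq:discrete-fisher1}, namely $g_N(\mu_N)^2\ge \frac1N\sum_{i=1}^N(\psi_i-\psi_{i+1})^2$, are insensitive to the boundary convention and carry over unchanged. The lower bound $\Delta x_i\ge a_1/N$ still survives (it is the minimum-principle half of Lemma~\ref{lem:bounds-distances}, recorded in \eqref{eq:max}), while \eqref{eq:uniform-ratios} is now an explicit hypothesis rather than a consequence of a weak maximum principle. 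Using these inputs I would build an interpolant $\trho$ that coincides with \eqref{eq:rho-tilde-l} on the interior intervals $[x_i,x_{i+1}]$, for which the computation of Section~\ref{subsubsec:proof-slopes} gives $g(\trho)^2\le \frac{4}{4-\epsilon}\,\frac1N\sum(\psi_i-\psi_{i+1})^2$ on the bulk; then establish $\trho\wto\rho$ exactly as in Lemma~\ref{lem:weak}, the only ingredients being \eqref{eq:uniform-ratios} and $\Delta x_1=\Delta x_2$. Combining this comparison with the narrow lower semicontinuity of $g$ (see \cite[Corollary 2.4.10]{AGS}) gives the chain \eqref{eq:intermediate}, hence \ref{cond:slopes}, and Theorem~\ref{thm:serfaty} then delivers that $\rho$ is the continuum gradient flow and that \eqref{eq:limits} holds.

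The delicate point, and the reason $\O=\R$ is genuinely harder, is the treatment of the two end particles. When $\O=[-\ell,\ell]$ the reflecting fictitious particles force $\Delta x_1=\Delta x_2$ and $\Delta x_N=\Delta x_{N+1}$, so by the last line of Lemma~\ref{lem:cases} one has $z_1=z_N=0$ and the extreme terms of $\frac1N\sum(\psi_i-\psi_{i+1})^2$ vanish. On the line $\psi_1=\psi_{N+1}=0$ (the fictitious particles sit at infinity), so $z_1$ and $z_N$ are now nonzero and contribute a term $\frac1N(z_1^2+z_N^2)$, of respective orders $\psi_2^2/N$ and $\psi_N^2/N$, to $g_N^2$. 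Correspondingly the interpolant $\trho$ must be adjusted near $x_1$ and $x_N$, carrying the residual mass in tails whose Fisher information reproduces precisely $\frac1N z_1^2$ and $\frac1N z_N^2$; only then does $g(\trho)^2\le \frac{4}{4-\epsilon}g_N^2$ close while $\trho$ remains a probability density narrowly convergent to $\rho$. I expect this endpoint matching to be the main obstacle: one must check that the boundary inter-particle distances, which as remarked after \eqref{eq:Ph} are only of order $1/\sqrt N$ rather than $1/N$, yield boundary slope contributions realizable as admissible tail Fisher information without perturbing the narrow limit. It is exactly this matching that the hypotheses of the present statement sidestep, and that a full proof would have to confront.
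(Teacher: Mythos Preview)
Your proposal follows essentially the same route as the paper's own discussion in Section~\ref{sec:extension-R}: the paper does not give a formal proof of this theorem but rather explains how the argument for Theorem~\ref{thm:gradflow} adapts, and your outline tracks that discussion point by point (compactness and \ref{cond:md} via Lemma~\ref{lem:compactness}; \ref{cond:liminf} via Lemma~\ref{lem:narrow-rhoN}; \ref{cond:slopes} by reusing the subdifferential computation, the minimum-principle half of Lemma~\ref{lem:bounds-distances}, the now-assumed uniform ratios \eqref{eq:uniform-ratios}, and an interpolant modified at the two end particles so that its Fisher information matches the discrete slope there).

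One small slip to flag: you list ``$\Delta x_1=\Delta x_2$'' among the ingredients for the narrow convergence of $\trho$, but on the line the convention \eqref{eq:convention1} gives $\Delta x_1=+\infty$, so this equality fails; likewise the proof of Lemma~\ref{lem:weak} uses the upper bound $\Delta x_i\le a_2/N$ from Lemma~\ref{lem:bounds-distances}, which is precisely the weak maximum principle that is lost on $\R$. So ``exactly as in Lemma~\ref{lem:weak}'' is too strong --- the paper says only that $\trho$ is ``still a good narrow approximation'' once \eqref{eq:uniform-ratios} is assumed and after the boundary modification you describe. This is a minor wording issue rather than a gap in the strategy.
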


\subsection{Extension to general weights}

In the numerical tests performed in the companion paper \cite{CHPW}, the weights of the particles are allowed to be non-equal. Extending our proof of convergence to particles with general (non-equal) weights is thus a natural question. First, note that the whole discretisation of Section \ref{subsec:particle-method} can be generalised to weights $w = (w_1,\dots,w_N) \subset (0,1)^N$ with $\sum_{i=1}^N w_i = 1$ and $\max_{i\in\{1,\dots,N\}} w_i\to 0$ as $N\to\infty$, see \cite{CHPW}. 

Every argument in Sections \ref{sec:compactness} and \ref{subsec:energy} still holds for such general weights by simple changes---the conditions \ref{cond:md} and \ref{cond:liminf} are thus still true. Section \ref{subsec:gamma} on the $\Gamma$-convergence of the discrete energy needs the re-definition of the notion of well-preparedness; indeed, the bound condition in Definition \ref{defn:initial-set} now has to be $a_1w_i \leq \Delta x_i \leq a_2w_i$ for all $i\in\{2,\dots,N\}$ and all $N\geq2$.
Then, Lemma \ref{lem:limsup-continuous} can be easily adapted to the general-weight setting by rewriting the well-prepared sequence \eqref{eq:initial-set} as
\bes
	\begin{cases} x_1 = \Phi(0),\\ x_i = \Phi\left(\sum_{j=1}^i w_j \right) & \mbox{for $i \in \{2,\dots,N\}$}. \end{cases}
\ees
Lemma \ref{lem:mollifier} does not need any changes. 

Section \ref{sec:conditions} is the part of the proof of the main theorem that needs the most delicate adapting. It is still not clear to us how the proof of \ref{cond:slopes} can be extended to general weights. In fact, Lemma \ref{lem:subdiff-EN} holds with little changes; however, we are no longer able to compute the element of minimal norm as simply as in Lemma \ref{lem:cases}, which subsequently does not allow us to conclude. We believe that a uniform control on the weights, such as
\bes
	\max_{i\in\{1,\dots,N-1\}} \left| \frac{w_{i+1}}{w_i} - 1\right| \xrightarrow[N\to\infty]{}0,
\ees
could be of help, although this would deserve more investigation, which we leave to future work.

\subsection*{Acknowledgements}

JAC is supported by the Royal Society through a Wolfson Research Merit Award. FSP is grateful to Daniel Matthes for insightful discussions. PS gratefully acknowledges the support of the National Science Foundation through DMS-1362879. PS is also supported by The Lady Davis Fellowship from the Technion. GW is supported by ISF grant 998/5.

\bibliography{gamma_convergence}

\begin{thebibliography}{10}

\bibitem{AG}
L.~Ambrosio and N.~Gigli.
\newblock A user's guide to optimal transport.
\newblock In {\em Modelling and Optimisation of Flows on Networks}, volume 2062
  of {\em Lecture Notes in Math.}, pages 1--155. Springer, Heidelberg, 2013.

\bibitem{AGS}
L.~Ambrosio, N.~Gigli, and G.~Savar{\'e}.
\newblock {\em Gradient Flows in Metric Spaces and in the Space of Probability
  Measures}.
\newblock Birkh{\"a}user Basel, 2005.

\bibitem{Ambrosio}
L.~Ambrosio and G.~Savar{\'e}.
\newblock Gradient flows of probability measures.
\newblock In {\em Handbook of Differential Equations: Evolutionary Equations},
  volume~3, pages 1--136. North-Holland, 2007.

\bibitem{Aubin}
J.-P. Aubin and A.~Cellina.
\newblock {\em Differential Inclusions}.
\newblock Grundlehren der mathematischen Wissenschaften. Springer Berlin
  Heidelberg, 1984.

\bibitem{Blanchet}
A.~Blanchet, V.~Calvez, and J.~A. Carrillo.
\newblock Convergence of the mass-transport steepest descent scheme for the
  subcritical {P}atlak--{K}eller--{S}egel model.
\newblock {\em SIAM J. Numer. Anal.}, 46(2):691--721, 2008.

\bibitem{Brezis}
H.~Br{\'e}zis.
\newblock {\em Op{\'e}rateurs Maximaux Monotones et Semi-Groupes de
  Contractions dans les Espaces de Hilbert}.
\newblock North-Holland Mathematics Studies. Elsevier Science, 1973.

\bibitem{CHPW}
J.~A. Carrillo, Y.~Huang, F.~S. Patacchini, and G.~Wolansky.
\newblock Numerical study of a particle method for gradient flows.
\newblock {\em \emph{To appear in} Kinet. Relat. Models}, 2016.

\bibitem{CSW}
J.~A. Carrillo, D.~Slep\v{c}ev, and L.~Wu.
\newblock Nonlocal interaction equations on uniformly prox-regular sets.
\newblock {\em Discrete Contin. Dyn. Syst.}, 36(3):1209--1247, 2016.

\bibitem{CT}
K.~Craig and I.~Topaloglu.
\newblock Convergence of regularized nonlocal interaction energies.
\newblock {\em SIAM J. Math. Anal.}, 48(1):34--60, 2016.

\bibitem{ET1}
J.~F. Edmond and L.~Thibault.
\newblock Relaxation of an optimal control problem involving a perturbed
  sweeping process.
\newblock {\em Math. Program.}, 104(2):347--373, 2005.

\bibitem{ET2}
J.~F. Edmond and L.~Thibault.
\newblock B{V} solutions of nonconvex sweeping process differential inclusion
  with perturbation.
\newblock {\em J. Differential Equations}, 226(1):135--179, 2006.

\bibitem{Erbar}
M.~Erbar.
\newblock The heat equation on manifolds as a gradient flow in the
  {W}asserstein space.
\newblock {\em Ann. Inst. H. Poincare Probab. Stat.}, 46(1):1--23, 2010.

\bibitem{JKO}
R.~Jordan, D.~Kinderlehrer, and F.~Otto.
\newblock The variational formulation of the {F}okker--{P}lanck equation.
\newblock {\em SIAM J. Math. Anal.}, 29(1):1--17, 1998.

\bibitem{JMO}
O.~Junge, H.~Osberger, and D.~Matthes.
\newblock A fully discrete variational scheme for solving nonlinear
  {F}okker--{P}lanck equations in higher space dimensions.
\newblock {\em \emph{Preprint,} arXiv:1509.07721}.

\bibitem{McCann2}
R.~J. McCann.
\newblock {\em A convexity theory for interacting gases and equilibrium
  crystals}.
\newblock PhD thesis, Princeton University, 1995.

\bibitem{McCann}
R.~J. McCann.
\newblock A convexity principle for interacting gases.
\newblock {\em Adv. Math.}, 128(1):153--179, 1997.

\bibitem{Mord}
B.~S. Mordukhovich.
\newblock {\em Variational Analysis and Generalized Differentiation. I Basic
  Theory}, volume 330 of {\em Grundlehren der Mathematischen Wissenschaften}.
\newblock 2006.

\bibitem{Ortner}
C.~Ortner.
\newblock {\em Two Variational Techniques for the Approximation of Curves of
  Maximal Slope}.
\newblock Technical report no. 05/10. Oxford University Computing Laboratory,
  2005.

\bibitem{MO3}
H.~Osberger and D.~Matthes.
\newblock Convergence of a variational {L}agrangian scheme for a nonlinear
  drift diffusion equation.
\newblock {\em ESAIM Math. Model. Numer. Anal.}, 48:697--726, 2014.

\bibitem{MO1}
H.~Osberger and D.~Matthes.
\newblock Convergence of a fully discrete variational scheme for a thin-film
  equation.
\newblock {\em \emph{To appear in} Radon Ser. Comput. Appl. Math.}, 2015.

\bibitem{MO2}
H.~Osberger and D.~Matthes.
\newblock A convergent {L}agrangian discretization for a nonlinear fourth order
  equation.
\newblock {\em Found. Comput. Math.}, pages 1--54, 2015.

\bibitem{PW}
R.~Pinchasi and G.~Wolansky.
\newblock A generalization of {T}hue's theorem to packings of non-equal discs,
  and an application to a discrete approximation of entropy.
\newblock {\em \emph{Preprint,} arXiv:1410.3240}.

\bibitem{SS}
E.~Sandier and S.~Serfaty.
\newblock Gamma-convergence of gradient flows with applications to
  {G}inzburg--{L}andau.
\newblock {\em Comm. Pure Appl. Math.}, 57(12):1627--1672, 2004.

\bibitem{Serfaty}
S.~Serfaty.
\newblock Gamma-convergence of gradient flows on {H}ilbert and metric spaces
  and applications.
\newblock {\em Discrete Contin. Dyn. Syst}, 31(4):1427--1451, 2011.

\bibitem{vazquez}
J.~L. V{\'a}zquez.
\newblock {\em The Porous Medium Equation}.
\newblock Oxford Mathematical Monographs. The Clarendon Press, Oxford
  University Press, Oxford, 2007.

\bibitem{Venel}
J.~Venel.
\newblock A numerical scheme for a class of sweeping processes.
\newblock {\em Numer. Math.}, 118(2):367--400, 2011.

\bibitem{Villani}
C.~Villani.
\newblock {\em Topics in Optimal Transportation}.
\newblock Graduate studies in mathematics. American Mathematical Society,
  Providence (R.I.), 2003.

\end{thebibliography}
\bibliographystyle{abbrv}

\end{document}